\definecolor{ufogreen}{rgb}{0.24, 0.82, 0.44}
\begin{document}

% THEOREMS -------------------------------------------------------

\newtheorem{theorem}{Théorème}[section]
\newtheorem{theore}{Théorème}
\newtheorem{definition}[theorem]{Définition}
\newtheorem{proposition}[theorem]{Proposition}
\newtheorem{corollary}[theorem]{Corollaire}
\newtheorem*{con}{Conjecture}
\newtheorem*{remark}{Remarque}
\newtheorem*{remarks}{Remarques}
\newtheorem*{pro}{Problème}
\newtheorem*{examples}{Exemples}
\newtheorem*{example}{Exemple}
\newtheorem{lemma}[theorem]{Lemme}

%---------------------------------------------

\title{Classification des entiers monomialement irréductibles et généralisations}

\author{Flavien Mabilat}

\date{}

\keywords{modular group; monomial solution; irreducibility; equivalence class modulo $N$}

\address{
Flavien Mabilat,
Laboratoire de Mathématiques de Reims,
UMR9008 CNRS et Université de Reims Champagne-Ardenne, 
U.F.R. Sciences Exactes et Naturelles 
Moulin de la Housse - BP 1039 
51687 Reims cedex 2,
France
}
\def\emailaddrname{{\itshape Courriel}}
\email{flavien.mabilat@univ-reims.fr}

\maketitle

\selectlanguage{french}
\begin{abstract}
Dans cet article, on va s'intéresser à la classification de certains entiers naturels reliés à la combinatoire des sous-groupes de congruence du groupe modulaire. Plus précisément, on va s'intéresser ici à la notion de solutions monomiales minimales. Celles-ci sont les solutions d'une équation matricielle (apparaissant également lors de l'étude des frises de Coxeter), modulo un entier $N$, dont toutes les composantes sont identiques et minimales pour cette propriété. Notre objectif ici est d'étudier les entiers $N$ pour lesquels les solutions monomiales minimales vérifiant certaines conditions fixées possèdent une propriété d'irréductibilité. En particulier, on effectuera la classification des entiers monomialement irréductibles qui sont les entiers pour lesquels toutes les solutions monomiales minimales non nulles sont irréductibles.
\\
\end{abstract}

\selectlanguage{english}
\begin{abstract}
In this article, we study the classification of some natural numbers related to the combinatorics of congruence subgroups of the modular group. More precisely, we will focus here on the notion of minimal monomial solutions. These are the solutions of a matrix equation (also appearing in the study of Coxeter's friezes), modulo an integer $N$, whose components are identical and minimal for this property. Our aim here is to study the integers $N$ for which the minimal monomial solutions satisfying some fixed conditions have an irreducibility property. In particular, we will classify the monomially irreducible integers which are the integers for which all the nonzero minimal monomial solutions are irreducible.

\end{abstract}

\selectlanguage{french}

\thispagestyle{empty}

\noindent {\bf Mots clés:} groupe modulaire; solution monomiale; irréductibilité; classes modulo $N$  
\\
\begin{flushright}
\og \textit{Le langage exprime autant par ce qui est entre les mots que par les mots eux-mêmes,
\\et par ce qu'il ne dit pas que par ce qu'il dit} \fg
\\Maurice Merleau-Ponty, \textit{La Prose du monde.}
\end{flushright}

\section{Introduction}

Depuis les résultats obtenus par J.\ L.\ Lagrange et C.\ F.\ Gauss sur les formes quadratiques à deux variables, le groupe modulaire \[SL_{2}(\mathbb{Z})=
\left\{
\begin{pmatrix}
a & b \\
c & d
   \end{pmatrix}
 \;\vert\;a,b,c,d \in \mathbb{Z},\;
 ad-bc=1
\right\}\] et son quotient $PSL_{2}(\mathbb{Z})=SL_{2}(\mathbb{Z})/\{\pm Id\}$ ont été au centre de très nombreux travaux. Grâce à ces derniers, on dispose d'une connaissance étendue de la structure de ces deux groupes et des propriétés vérifiées par leurs éléments. En particulier, un des résultats les plus anciens et les plus importants concernant $SL_{2}(\mathbb{Z})$ est l'existence de parties génératrices à seulement deux éléments. On peut notamment, parmi tous les choix possibles, considérer les deux matrices suivantes :
\[T=\begin{pmatrix}
 1 & 1 \\[2pt]
    0    & 1 
   \end{pmatrix}, S=\begin{pmatrix}
   0 & -1 \\[2pt]
    1    & 0 
   \end{pmatrix}.
 \] 

\noindent Une fois ces deux générateurs sélectionnés, on peut montrer (voir par exemple l'introduction de \cite{M}) que, pour tout élément $A$ du groupe modulaire, il existe un entier strictement positif $n$ et des entiers strictement positifs $a_{1},\ldots,a_{n}$ tels que : \[A=T^{a_{n}}ST^{a_{n-1}}S\cdots T^{a_{1}}S=\begin{pmatrix}
   a_{n} & -1 \\[4pt]
    1    & 0 
   \end{pmatrix}
\begin{pmatrix}
   a_{n-1} & -1 \\[4pt]
    1    & 0 
   \end{pmatrix}
   \cdots
   \begin{pmatrix}
   a_{1} & -1 \\[4pt]
    1    & 0 
    \end{pmatrix}:=M_{n}(a_{1},\ldots,a_{n}).\]
		
\noindent En revanche, une telle écriture n'est pas unique. En effet, on dispose, par exemple, de l'égalité suivante : \[-Id=M_{3}(1,1,1)=M_{4}(1,2,1,2).\]

Ainsi, tout élément du groupe modulaire peut s'écrire sous la forme $M_{n}(a_{1},\ldots,a_{n})$ avec $a_{1},\ldots,a_{n}$ des entiers strictement positifs. La possibilité d'exprimer les matrices de $SL_{2}(\mathbb{Z})$ sous cette forme est d'autant plus intéressante que les matrices $M_{n}(a_{1},\ldots,a_{n})$ interviennent dans de nombreux autres domaines. On les retrouve notamment dans l'étude des réduites des fractions continues de Hirzebruch-Jung, dans la construction des frises de Coxeter ou encore dans l'écriture matricielle des équations de Sturm-Liouville discrètes (voir par exemple \cite{O} section 1.3). 
\\
\\ \indent Bien qu'elle soit connue depuis longtemps, cette façon d'exprimer les éléments du groupe modulaire a  été jusqu'à présent assez peu exploitée. Cependant, de nouveaux horizons de recherche liés à cette dernière ont récemment émergé. Parmi ceux-ci, il y a notamment les différentes écritures associées aux matrices $\pm Id$, c'est-à-dire la résolution sur $\mathbb{N}^{*}$ de l'équation suivante (parfois appelée équation de Conway-Coxeter) : \begin{equation}
\label{a}
\tag{$E$}
M_{n}(a_1,\ldots,a_n)=\pm Id.
\end{equation} 

\noindent V.\ Ovsienko (voir \cite{O} Théorèmes 1 et 2) a entièrement résolu cette dernière sur les entiers naturels non nuls et a donné une description combinatoire des solutions en terme de découpages de polygones. Ce résultat permet d'envisager plusieurs pistes de généralisations. On peut ainsi, en lien avec la construction des frises de Coxeter, résoudre \eqref{a} sur d'autres ensembles (voir \cite{C,CH}). On peut également chercher les différentes écritures associées à d'autres éléments du groupe modulaire, notamment $S$ et $T$ (voir \cite{M}). Ici, on va s'intéresser à un autre angle d'étude. En effet, le résultat de V.\ Ovsienko peut être vu comme une caractérisation des $n$-uplets d'entiers strictement positifs pour lesquels $M_{n}(a_{1},\ldots,a_{n})$ appartient au centre de $SL_{2}(\mathbb{Z})$. À la lueur de cette interprétation, on est naturellement amené à chercher toutes les écritures des éléments appartenant à certains sous-groupes fixés. Dans ce qui va suivre, on va considérer les cas des sous-groupes de congruence suivants: \[\{A \in SL_{2}(\mathbb{Z})~{\rm tel~que}~A= \pm Id~( {\rm mod}~N)\},\]
\noindent ce qui est en fait équivalent à l'étude sur $\mathbb{Z}/N\mathbb{Z}$ de l'équation :
\begin{equation}
\label{p}
\tag{$E_{N}$}
M_{n}(a_1,\ldots,a_n)=\pm Id.
\end{equation} 
\noindent On dira, en particulier, qu'une solution de \eqref{p} est de taille $n$ si cette solution est un $n$-uplet d'éléments de $\mathbb{Z}/N\mathbb{Z}$.
\\
\\ \indent On dispose déjà de nombreux résultats concernant l'équation \eqref{p} (voir notamment \cite{M1, M2, M3, M4}). L'élément clef utilisé pour l'obtention de ces derniers est le recours à une notion de solution irréductible, introduite à l'origine pour l'étude des frises de Coxeter, à partir de laquelle on peut construire l'ensemble des solutions (voir section suivante). Grâce à elle, on a pu résoudre complètement \eqref{p} pour $N \leq 6$ (voir \cite{M1} section 4) et on a pu établir plusieurs résultats généraux d'irréductibilité. 
\\
\\ \indent Une part importante de ces résultats concernent les solutions monomiales minimales qui sont les solutions de \eqref{p} dont toutes les composantes sont identiques et minimales pour cette propriété (voir \cite{M1} section 3.3 et la section suivante). De plus, on sait que toutes les solutions monomiales minimales non nulles sont irréductibles pour certains entiers $N$ (voir \cite{M1}). Ces deux éléments incitent naturellement à rechercher l'ensemble des entiers monomialement irréductibles, qui sont les entiers pour lesquels toutes les solutions monomiales minimales non nulles sont irréductibles. Dans cette optique, on avait obtenu, dans un précédent article, un début de classification de ces derniers (voir \cite{M3}) et on cherche ici à obtenir une classification complète. Une fois celle-ci établie, on s'intéressera à des généralisations du concept d'entier monomialement irréductible. Pour mener à bien ces différentes tâches, on va donner, dans la partie suivante, les définitions qui nous seront utiles pour la suite ainsi que les résultats principaux qui seront démontrés dans les parties suivantes. 

\section{Définitions et résultats principaux}
\label{RP}    

Cette section poursuit un double objectif. D'une part, elle vise à fournir les définitions et notations essentielles à l'étude de l'équation \eqref{p} qui seront fréquemment utilisées dans les sections suivantes. D'autre part, elle va permettre de regrouper les énoncés des résultats les plus importants contenus dans ce texte. Sauf mention contraire, $N$ désigne un entier naturel supérieur à $2$. Si $A \in SL_{2}(\mathbb{Z}/N\mathbb{Z})$ on note $A^{T}$ la transposée de $A$. On aura régulièrement besoin de considérer des classes d'entiers modulo $N$ et modulo des diviseurs de $N$. Aussi, s'il n'y a pas d'ambiguïté sur $N$, on utilisera la notation $\overline{a}:=a+N\mathbb{Z}$ (avec $a \in \mathbb{Z}$) pour désigner les classes modulo $N$ et on écrira $a+\frac{N}{k}\mathbb{Z}$ pour représenter les classes modulo un diviseur de $N$. $\mathbb{P}$ désigne l'ensemble des nombres premiers et $\varphi$ la fonction indicatrice d'Euler.

\begin{definition}[\cite{C}, lemme 2.7]
\label{21}

Soient $(\overline{a_{1}},\ldots,\overline{a_{n}}) \in (\mathbb{Z}/N \mathbb{Z})^{n}$ et $(\overline{b_{1}},\ldots,\overline{b_{m}}) \in (\mathbb{Z}/N \mathbb{Z})^{m}$. On définit l'opération ci-dessous: \[(\overline{a_{1}},\ldots,\overline{a_{n}}) \oplus (\overline{b_{1}},\ldots,\overline{b_{m}})= (\overline{a_{1}+b_{m}},\overline{a_{2}},\ldots,\overline{a_{n-1}},\overline{a_{n}+b_{1}},\overline{b_{2}},\ldots,\overline{b_{m-1}}).\] Le $(n+m-2)$-uplet obtenu est appelé la somme de $(\overline{a_{1}},\ldots,\overline{a_{n}})$ avec $(\overline{b_{1}},\ldots,\overline{b_{m}})$.

\end{definition}

\begin{examples}

{\rm On donne ci-dessous quelques exemples de sommes :
\begin{itemize}
\item $(\overline{2},\overline{0},\overline{5}) \oplus (\overline{-1},\overline{2},\overline{1})=(\overline{3},\overline{0},\overline{4},\overline{2})$;
\item $(\overline{3},\overline{1},\overline{2},\overline{0}) \oplus (\overline{2},\overline{2},\overline{1},\overline{5},\overline{1})=(\overline{4},\overline{1},\overline{2},\overline{2},\overline{2},\overline{1},\overline{5})$;
\item $n \geq 2$, $(\overline{a_{1}},\ldots,\overline{a_{n}}) \oplus (\overline{0},\overline{0}) = (\overline{0},\overline{0}) \oplus (\overline{a_{1}},\ldots,\overline{a_{n}})=(\overline{a_{1}},\ldots,\overline{a_{n}})$.
\end{itemize}
}
\end{examples}

L'opération $\oplus$ introduite dans la définition précédente est particulièrement utile pour l'étude de l'équation \eqref{p}. En effet, celle-ci possède la propriété suivante : si $(\overline{b_{1}},\ldots,\overline{b_{m}})$ est une solution de \eqref{p} alors la somme $(\overline{a_{1}},\ldots,\overline{a_{n}}) \oplus (\overline{b_{1}},\ldots,\overline{b_{m}})$ est une solution de \eqref{p} si et seulement si $(\overline{a_{1}},\ldots,\overline{a_{n}})$ est une solution de \eqref{p} (voir \cite{C,WZ} et \cite{M1} proposition 3.7). En revanche, $\oplus$ n'est ni commutative ni associative (voir \cite{WZ} exemple 2.1) et les $k$-uplets d'éléments de $\mathbb{Z}/N \mathbb{Z}$ n'ont pas d'inverse pour $\oplus$ lorsque $k \geq 3$.

\begin{definition}[\cite{C}, définition 2.5]
\label{22}

Soient $(\overline{a_{1}},\ldots,\overline{a_{n}}) \in (\mathbb{Z}/N \mathbb{Z})^{n}$ et $(\overline{b_{1}},\ldots,\overline{b_{n}}) \in (\mathbb{Z}/N \mathbb{Z})^{n}$. On dit que $(\overline{a_{1}},\ldots,\overline{a_{n}}) \sim (\overline{b_{1}},\ldots,\overline{b_{n}})$ si $(\overline{b_{1}},\ldots,\overline{b_{n}})$ est obtenu par permutations circulaires de $(\overline{a_{1}},\ldots,\overline{a_{n}})$ ou de $(\overline{a_{n}},\ldots,\overline{a_{1}})$.

\end{definition}

On peut aisément s'assurer que $\sim$ est une relation d'équivalence sur les $n$-uplets d'éléments de $\mathbb{Z}/N \mathbb{Z}$ (voir \cite{WZ} lemme 1.7). Par ailleurs, si $(\overline{a_{1}},\ldots,\overline{a_{n}}) \sim (\overline{b_{1}},\ldots,\overline{b_{n}})$ alors $(\overline{a_{1}},\ldots,\overline{a_{n}})$ est solution de \eqref{p} si et seulement si $(\overline{b_{1}},\ldots,\overline{b_{n}})$ l'est aussi (voir \cite{C} proposition 2.6).
\\
\\Muni de ces deux définitions, on peut maintenant définir la notion d'irréductibilité annoncée.

\begin{definition}[\cite{C}, définition 2.9]
\label{23}

Une solution $(\overline{c_{1}},\ldots,\overline{c_{n}})$ avec $n \geq 3$ de \eqref{p} est dite réductible s'il existe une solution de \eqref{p} $(\overline{b_{1}},\ldots,\overline{b_{l}})$ et un $m$-uplet $(\overline{a_{1}},\ldots,\overline{a_{m}})$ d'éléments de $\mathbb{Z}/N \mathbb{Z}$ tels que \begin{itemize}
\item $(\overline{c_{1}},\ldots,\overline{c_{n}}) \sim (\overline{a_{1}},\ldots,\overline{a_{m}}) \oplus (\overline{b_{1}},\ldots,\overline{b_{l}})$;
\item $m \geq 3$ et $l \geq 3$.
\end{itemize}
Une solution est dite irréductible si elle n'est pas réductible.

\end{definition}

\begin{remark} 

{\rm On ne considère pas $(\overline{0},\overline{0})$ comme une solution irréductible de \eqref{p}.}

\end{remark}

\indent Ainsi, à la lueur de cette définition, la résolution de \eqref{p} tend à se réduire à la recherche et à l'étude des solutions irréductibles. Cela a amené naturellement à l'introduction et à l'étude de classes particulières de solutions dont celle des solutions monomiales définie ci-dessous :

\begin{definition}[\cite{M1}, définition 3.9]
\label{24}

i)~Soit $\overline{k} \in \mathbb{Z}/N\mathbb{Z}$. On appelle solution $\overline{k}$-monomiale un $n$-uplet d'éléments de $\mathbb{Z}/ N \mathbb{Z}$ constitué uniquement de $\overline{k}$ et solution de \eqref{p}.
\\
\\ ii)~On appelle solution monomiale une solution pour laquelle il existe $\overline{l} \in \mathbb{Z}/N\mathbb{Z}$ tel qu'elle est $\overline{l}$-monomiale.
\\
\\ iii)~On appelle solution $\overline{k}$-monomiale minimale une solution $\overline{k}$-monomiale de taille $n$ avec $n$ le plus petit entier pour lequel il existe une solution $\overline{k}$-monomiale.
\\
\\ iv)~On appelle solution monomiale minimale une solution $\overline{k}$-monomiale minimale pour un $\overline{k} \in \mathbb{Z}/N\mathbb{Z}$.

\end{definition}

Notons que pour tout $\overline{k} \in \mathbb{Z}/N\mathbb{Z}$ la solution $\overline{k}$-monomiale minimale de \eqref{p} existe toujours puisque $M_{1}(\overline{k})$ est d'ordre fini dans ${\rm PSL}_{2}(\mathbb{Z}/N\mathbb{Z})$. De plus, le $n$-uplet $(\overline{k},\ldots,\overline{k})$ est une solution de \eqref{p} si et seulement si $n$ est un multiple de l'ordre de $M_{1}(\overline{k})$ dans ${\rm PSL}_{2}(\mathbb{Z}/N\mathbb{Z})$.
\\
\\ \indent On a déjà démontré de nombreuses propriétés d'irréductibilité pour ces solutions (voir \cite{M1, M2, M4} et la section \ref{pre}). Ces résultats montrent notamment que, pour certaines valeurs de $N$, toutes les solutions $\overline{k}$-monomiales minimales avec $k$ et $N$ premiers entre eux, voire même toutes les solutions monomiales minimales non nulles, sont irréductibles. Cela amène naturellement aux définitions ci-dessous : 
		
\begin{definition}
\label{25}

Soit $N$ un entier naturel non nul supérieur à 2.
\\
\\i) On dit que $N$ est monomialement irréductible si pour tout $\overline{k} \in \mathbb{Z}/N\mathbb{Z}-\{\overline{0}\}$ la solution $\overline{k}$-monomiale minimale de \eqref{p} est irréductible. Dans le cas contraire, on dit que $N$ est monomialement réductible.
\\
\\ii) $N$ est dit quasi monomialement irréductible si, pour tout $\overline{k} \in \mathbb{Z}/N\mathbb{Z}$ tel que $k$ et $N$ sont premiers entre eux, la solution $\overline{k}$-monomiale minimale de \eqref{p} est irréductible. Dans le cas contraire, on dit que $N$ est quasi monomialement réductible.

\end{definition}

\begin{remark}
{\rm En particulier, les entiers monomialement irréductibles sont quasi monomialement irréductibles. Notons également qu'il est tout à fait possible, si $N$ est quasi monomialement irréductible, qu'il existe un entier $l$ non premier avec $N$ tel que la solution $\overline{l}$-monomiale minimale de \eqref{p} est irréductible. Par exemple, si $N=16$ alors $N$ est quasi monomialement irréductible (mais pas monomialement irréductible) et la solution $\overline{6}$-monomiale minimale de \eqref{p} est irréductible (voir Théorème \ref{33}).

}
\end{remark}

\indent On souhaite procéder à la classification complète de ces entiers. Notre premier objectif est de terminer la classification des entiers monomialement irréductibles commencée dans \cite{M3}. Dans cette article, on avait obtenu un certain nombre de résultats, rappelés dans la section suivante (voir Théorème \ref{35}), et on avait émis la conjecture suivante : les entiers monomialement irréductibles sont premiers ou égaux à 4, 6, 8, 12 ou 24. On va montrer ici que cette conjecture est vraie, c'est-à-dire démontrer le résultat suivant : 

\begin{theorem}
\label{26}

Soit $N$ un entier supérieur à 2. $N$ est monomialement irréductible si et seulement si $N \in \mathbb{P} \cup \{4, 6, 8, 12, 24\}$.

\end{theorem}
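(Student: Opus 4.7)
L'implication facile --- si $N$ est premier ou si $N \in \{4, 6, 8, 12, 24\}$, alors $N$ est monomialement irréductible --- est déjà couverte par les résultats antérieurs rassemblés dans le théorème \ref{35} et les travaux \cite{M1, M2, M4}. Toute la difficulté réside donc dans la réciproque : pour chaque entier composé $N \notin \{4, 6, 8, 12, 24\}$, il s'agit d'exhiber une solution monomiale minimale non nulle de \eqref{p} qui soit réductible.

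La première étape consiste à utiliser le théorème \ref{35} pour dresser la liste exacte des $N$ non encore couverts. Celui-ci traite déjà tous les entiers jusqu'à $17\,440$ ainsi qu'une collection infinie de composés ; il ne devrait rester qu'un petit nombre de familles bien délimitées, que j'organiserais selon la décomposition primaire de $N$ en deux cas principaux : soit $N$ admet un facteur premier $p \geq 5$, soit $N$ est de la forme $2^{a} 3^{b}$ avec $(a,b)$ hors d'un petit ensemble exceptionnel.

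La deuxième étape, qui est le cœur technique, est la construction explicite d'une $\oplus$-factorisation d'une solution monomiale minimale bien choisie. Dans le premier cas, j'emploierais un argument multiplicatif fondé sur le théorème chinois : en choisissant $\overline{k}$ congru à zéro modulo $N/p^{v_{p}(N)}$ et non trivial modulo $p^{v_{p}(N)}$, la taille $n_{\overline{k}}(N)$ s'identifie à celle obtenue modulo $p^{v_{p}(N)}$, contrôlée par l'ordre de $M_{1}(\overline{k})$ dans ${\rm PSL}_{2}(\mathbb{Z}/N\mathbb{Z})$. Cette taille croît avec $p$ et devrait dépasser le seuil au-delà duquel on peut découper le $n$-uplet constant $(\overline{k}, \ldots, \overline{k})$ en deux blocs de longueur $\geq 3$ formant chacun une solution de \eqref{p}. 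Dans le second cas, je procéderais par un argument de relèvement à partir de cas de base traités par le théorème \ref{35}, en propageant une décomposition connue modulo un diviseur approprié de $N$ et en ajustant les composantes par des multiples de ce diviseur.

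Le principal obstacle est la nature uniforme de la construction : la vérification informatique jusqu'à $10^{8}$ rend la conjecture très plausible, mais produire une preuve valable simultanément pour une infinité de $N$ demande un contrôle précis de la taille minimale $n_{\overline{k}}(N)$ en fonction de la décomposition primaire de $N$, couplé à la vérification que chaque sommande de la $\oplus$-scission proposée est bien elle-même une solution de \eqref{p} de taille $\geq 3$. C'est sur ce point de compatibilité simultanée entre taille minimale forcée et existence d'une scission admissible que devrait se concentrer l'essentiel de l'effort technique.
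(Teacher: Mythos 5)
Votre plan contient une lacune essentielle au c\oe ur de la réciproque. La phrase clef --- \og cette taille croît avec $p$ et devrait dépasser le seuil au-delà duquel on peut découper le $n$-uplet constant $(\overline{k},\ldots,\overline{k})$ en deux blocs de longueur $\geq 3$ formant chacun une solution \fg{} --- repose sur deux idées fausses. D'abord, il n'existe pas de tel seuil : une taille arbitrairement grande n'entraîne nullement la réductibilité, comme le montre le théorème \ref{32}, qui fournit pour tout $N$ une solution monomiale minimale irréductible de taille $N$. Ensuite, on ne peut pas \og découper \fg{} un uplet constant en deux blocs constants qui seraient chacun des solutions : chacun serait alors une solution monomiale de taille strictement plus petite, ce qui contredirait la minimalité. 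La réductibilité au sens de la définition \ref{23} exige, via l'opération $\oplus$, l'existence d'une solution de la forme $(\overline{x},\overline{k},\ldots,\overline{k},\overline{x})$ de taille intermédiaire dont les extrémités sont modifiées, et c'est précisément la construction explicite d'un tel $\overline{x}$ qui constitue tout le contenu de la preuve ; votre plan n'en propose aucun candidat, et le choix de $k\equiv 0$ modulo $N/p^{v_{p}(N)}$ n'est pas accompagné du moindre argument de réductibilité (pour $N$ pair, le choix analogue $\overline{k}=\overline{N/2}$ donne d'ailleurs une solution \emph{irréductible}, cf.\ section \ref{pre}).

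À titre de comparaison, la preuve du papier procède ainsi : le point nouveau est la proposition \ref{36}, qui, pour toute factorisation $N=nm$ avec $n,m$ premiers entre eux, différents de $1$, et $m$ impair non divisible par $3$, prend $k$ \emph{premier avec} $N$ défini par $k\equiv 1\ [n]$ et $k\equiv 2\ [m]$, calcule la taille exacte de la solution $\overline{k}$-monomiale minimale ($6m$ ou $3m$) par le lemme chinois et le théorème \ref{32}, puis exhibe explicitement $x$ (égal à $am$ ou à $2bn$ suivant la classe de $m$ modulo $3$, où $am+bn=1$) tel que $(\overline{x},\overline{k},\ldots,\overline{k},\overline{x})$ soit une solution de taille $m+2$ ou $m$, ce qui permet la réduction. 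Le théorème \ref{26} s'en déduit alors directement : pour $N$ impair, la proposition \ref{36} force $N$ à être une puissance d'un premier et la proposition \ref{34} exclut les facteurs carrés ; pour $N$ pair, on écrit $N=2^{a}3^{b}m$ avec $m$ impair non divisible par $3$, la proposition \ref{36} force $m=1$ et la proposition \ref{34} force $a\leq 3$ et $b\leq 1$. Votre découpage en cas est en outre décalé (le cas pair est déjà entièrement réglé par le théorème \ref{35} ii), et le cas restant est celui des $N$ impairs produits d'au moins deux premiers distincts, non celui des $2^{a}3^{b}$). C'est cette construction arithmétique précise, absente de votre plan, qui fait fonctionner l'argument uniformément pour une infinité de $N$.
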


\noindent Ensuite, on effectuera la classification complète des entiers quasi monomialement irréductibles.

\begin{theorem}
\label{27}

Soit $N$ un entier supérieur à 2. $N$ est quasi monomialement irréductible si et seulement si \[N \in \{p^{n}, p \in \mathbb{P}, n \in \mathbb{N}^{*}\} \cup \{2^{n}3^{m}, (n,m) \in (\mathbb{N}^{*})^{2}\}.\]

\end{theorem}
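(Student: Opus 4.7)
The plan is to prove the two implications of the equivalence separately, leveraging Theorem \ref{26} (the classification of monomially irreducible integers) and the CRT-type behaviour of the minimal monomial length: for a coprime factorization $N = N_1 N_2$ and $k$ coprime to $N$, one has
\[ n_k(N) = \mathrm{lcm}\bigl(n_{k_1}(N_1),\, n_{k_2}(N_2)\bigr), \]
where $k_i := k \bmod N_i$, since $M_{n}(\overline{k},\ldots,\overline{k}) = M_1(\overline{k})^n = \pm Id$ in $SL_2(\mathbb{Z}/N)$ is equivalent to the analogous relation holding modulo both factors simultaneously.

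For the necessary direction, assume $N$ has two distinct prime divisors and at least one prime divisor $p \geq 5$. Factor $N = N_1 N_2$ coprimely with $p \mid N_1$ and $N_2 \geq 2$. Since $p \geq 5$ provides elements of order $\geq 3$ in $PSL_2(\mathbb{Z}/p)$, one can pick $k_1$ coprime to $N_1$ and $k_2$ coprime to $N_2$ so that both $n_{k_1}(N_1) \geq 3$ and $n_{k_2}(N_2) \geq 3$; let $k$ be the CRT lift, automatically coprime to $N$. Reducibility of the minimal $\overline{k}$-monomial solution is then shown by constructing an explicit $\oplus$-decomposition into two pieces of lengths $m, l \geq 3$ whose $\oplus$-length $m+l-2$ equals $n_k(N)$; the pieces are obtained by gluing, through CRT, two shorter solutions coming from $N_1$ and $N_2$ respectively and adjusting the two boundary entries so that the combined tuple is cyclically equivalent to the full monomial.

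For the sufficient direction, I treat two families. If $N = p^n$ is a prime power and $k$ is coprime to $p$, any hypothetical reducible decomposition mod $p^n$ descends via reduction mod $p$ to a decomposition of a $\overline{k}$-monomial solution there, contradicting the monomial irreducibility of $p$ guaranteed by Theorem \ref{26}; this case should largely follow from the analyses already assembled in \cite{M3} (see also Theorem \ref{35}). If $N = 2^n 3^m$ with $n, m \geq 1$, I argue by induction on $n + m$. The base cases $N \in \{6, 12, 24\}$ are monomially, hence quasi monomially, irreducible by Theorem \ref{26}. For the inductive step with $k$ coprime to $N$, I reduce a hypothetical reducible decomposition modulo $24$ (valid because $\gcd(k, N) = 1$ implies $\gcd(k, 24) = 1$), and argue that the resulting decomposition either violates the monomial irreducibility of $24$ or fails to be compatible with the $2$-adic and $3$-adic constraints imposed by the original decomposition mod $N$.

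The principal obstacle lies in the sufficient direction for $N = 2^n 3^m$: a reducibility mod $N$ need not descend cleanly to the minimal monomial solution mod $24$, because the reduction could yield a non-minimal monomial solution of length $n_k(N)$ that decomposes as an iterated $\oplus$-sum of the minimal one mod $24$ without contradicting Theorem \ref{26}. Ruling out this scenario requires a careful analysis of the residues modulo $24$ of the boundary entries $a_1, a_m, b_1, b_l$ of the decomposition and of how the length ratio $n_k(N)/n_k(24)$ interacts with the $\oplus$-operation; I expect this to be the technical heart of the argument.
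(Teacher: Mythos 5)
Your high-level architecture (necessity via a constructed reducible solution when $N$ has two coprime factors and a prime divisor $\geq 5$; sufficiency split into prime powers and $2^n3^m$) matches the paper's, but each of the three components has a genuine gap. For necessity, the condition you isolate --- that both local minimal lengths $n_{k_1}(N_1)$ and $n_{k_2}(N_2)$ be $\geq 3$ --- is not sufficient and the ``gluing'' step is precisely what has to be proved. The paper's Proposition \ref{36} does this concretely: writing $N=nm$ with $m$ odd, prime to $3$ and to $n$, it takes $k\equiv 1\ [n]$ and $k\equiv 2\ [m]$, computes (via Theorem \ref{32} and the CRT) that the minimal monomial length is $6m$ (or $3m$ if $n=2$), and then exhibits an explicit solution $(\overline{x},\overline{k},\ldots,\overline{k},\overline{x})$ of length $m+2$ or $m$ with $x\equiv 0$ or $2\ [m]$ and $x\equiv 1$ or $0\ [n]$, the case split depending on $m\bmod 3$ and on parity. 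None of this is recoverable from ``both local orders are $\geq 3$''; you need the local lengths to be \emph{incommensurate} in a way that leaves room for a strictly shorter gluable piece, and you need to verify the sign ($\pm Id$) bookkeeping modulo each factor.

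For sufficiency, your descent argument for $N=p^n$ fails: reducing a decomposition mod $p$ yields a decomposition of a \emph{non-minimal} monomial solution mod $p$ (the minimal length mod $p^n$ is generally $p^{n-1}$ times the one mod $p$), and non-minimal monomial solutions are always reducible, so no contradiction with Theorem \ref{26} arises. The paper simply invokes Theorem \ref{33} (from \cite{M4}), which is the correct external input here. For $N=2^n3^m$, the obstacle you honestly flag at the end is real, and the paper avoids it by a different mechanism (Proposition \ref{37}): instead of reducing mod $24$ and inducting, it uses Lemma \ref{31}(B) --- if the minimal $\overline{k}$-monomial solution mod $d$ is irreducible, then any solution $(\overline{a},\overline{k},\ldots,\overline{k},\overline{a})$ mod $d$ forces $\overline{a}\in\{\overline{0},\overline{k}\}$ --- applied to the three divisors $2^n$, $3^m$ and $h=2\cdot 3^m$ of $N$. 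This pins the boundary entry $x$ of any putative reducing solution down to four CRT combinations, of which $\overline{x}=\overline{0}$ and $\overline{x}=\overline{k}$ contradict minimality of the length, and the two mixed combinations are killed by the congruence constraint coming from the third divisor $h$. That use of a carefully chosen intermediate divisor, rather than reduction to the base case $24$, is the missing idea in your plan.
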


\noindent Ces deux théorèmes seront démontrés dans la section \ref{quasi}.
\\
\\ \indent On constate en particulier dans la liste précédente l'apparition des entiers de la forme $2^{n}3^{m}$ qui jusque-là n'avaient pas encore été considérés. Comme on dispose de la classification complète des solutions monomiales minimales irréductibles dans les cas où $N$ est la puissance d'un nombre premier (voir \cite{M4} et le Théorème \ref{33}), il semble intéressant d'obtenir un résultat semblable pour ce nouvel ensemble d'entiers. Dans cette optique, on établira dans la section \ref{class} le résultat partiel ci-dessous :

\begin{theorem}
\label{28}

Soit $N=2 \times 3^{m}$ avec $m \geq 2$. Soit $0 \leq k \leq N-1$. La solution $\overline{k}$-monomiale minimale de \eqref{p} est réductible si et seulement si $k=3^{l}a$ avec $1 \leq l \leq m-2$ ou $k=3^{m-1}a$ avec $a$ pair premier avec 3, c'est-à-dire si et seulement si $k$ est un multiple de 3 différent de $3^{m}$, de $3^{m-1}$ et de $5 \times 3^{m-1}$. En particulier, \eqref{p} a $4 \times 3^{m-1}+3$ solutions monomiales minimales irréductibles.

\end{theorem}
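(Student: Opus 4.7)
La preuve repose sur le théorème des restes chinois qui fournit l'isomorphisme $\mathbb{Z}/N\mathbb{Z} \simeq \mathbb{Z}/2\mathbb{Z} \times \mathbb{Z}/3^{m}\mathbb{Z}$. Pour tout $\overline{k} \in \mathbb{Z}/N\mathbb{Z}$, la longueur $n_{k}$ de la solution $\overline{k}$-monomiale minimale est le plus petit entier $n \geq 1$ tel que $M_{1}(\overline{k})^{n} = \pm Id$ modulo $N$. Comme $-Id \equiv Id$ dans $SL_{2}(\mathbb{Z}/2\mathbb{Z})$, cette condition équivaut à $n_{k} = \mathrm{ppcm}(p_{k}, q_{k})$, où $p_{k} \in \{2, 3\}$ selon la parité de $k$ et où $q_{k}$ désigne la longueur de la solution $\overline{k \bmod 3^{m}}$-monomiale minimale de \eqref{p} modulo $3^{m}$, laquelle est déterminée par le théorème \ref{33}.

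L'argument se déroule ensuite par analyse de cas selon la valuation 3-adique $v_{3}(k)$ et la parité de $k$. Pour les $4 \cdot 3^{m-1}$ valeurs premières avec 3 (soit $v_{3}(k) = 0$), l'irréductibilité se traite de deux façons: si $k$ est impair, alors $\gcd(k, N) = 1$ et le théorème \ref{27} conclut directement; si $k$ est pair, un argument combinatoire spécifique exploite simultanément les contraintes imposées modulo 2 (qui forcent des longueurs paires pour chaque composante d'une décomposition éventuelle) et la structure des solutions monomiales irréductibles modulo $3^{m}$ fournie par le théorème \ref{33}, ce qui écarte toute décomposition modulo $N$.

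Pour les multiples de 3, on sépare les sous-cas selon $v_{3}(k)$. Quand $1 \leq v_{3}(k) \leq m-2$, la solution minimale modulo $3^{m}$ est déjà réductible par le théorème \ref{33}, et l'on relève explicitement cette décomposition modulo $N$ en ajustant les composantes selon la parité de $k$. Pour $v_{3}(k) \in \{m-1, m\}$, un calcul direct dans $SL_{2}(\mathbb{Z}/N\mathbb{Z})$ donne $n_{k} = 6$ dans tous les sous-cas; pour $k \in \{2 \cdot 3^{m-1}, 4 \cdot 3^{m-1}\}$, la construction explicite d'une décomposition du 6-uplet monomial en somme de deux 4-uplets solutions de \eqref{p} établit la réductibilité, la parité paire du facteur $a$ étant essentielle pour assurer la validité des sommes modulo 2.

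Le principal obstacle technique consiste à montrer l'irréductibilité des trois cas exceptionnels $k \in \{3^{m-1}, 5 \cdot 3^{m-1}, 3^{m}\}$, tous de longueur minimale 6 et tous correspondant à $k$ impair. Il faut écarter les trois formes de décomposition de tailles $(r, s) \in \{(3, 5), (4, 4), (5, 3)\}$: pour chaque forme et chacun de ces trois $k$, les équations $M_{r}(\ldots) = \pm Id$ et $M_{s}(\ldots) = \pm Id$ appliquées aux éventuels sous-uplets, combinées aux égalités forcées par la répétition de $\overline{k}$ dans le 6-uplet (modulo permutations circulaires et renversement), conduisent à une contradiction, en exploitant finement les valeurs de $k$ modulo 2 et modulo $3^{m}$. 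Le comptage annoncé $4 \cdot 3^{m-1} + 3$ des solutions monomiales minimales irréductibles résulte alors directement de cette énumération.
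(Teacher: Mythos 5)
Votre démonstration suit pour l'essentiel la même stratégie que celle de l'article : décomposition par le lemme chinois, calcul de la taille minimale comme ppcm des tailles modulo $2$ et modulo $3^{m}$, relèvement d'une réduction de $(E_{3^{m}})$ (via le lemme \ref{41}) pour $1 \leq v_{3}(k) \leq m-2$, et analyse directe des solutions de taille $6$ pour les cas frontières, avec la même conclusion de dénombrement. Les seules différences sont cosmétiques — l'article cite la proposition \ref{34} pour $k \in \{2 \times 3^{m-1}, 4 \times 3^{m-1}\}$ et le résultat connu sur $\overline{N/2}$ pour $k=3^{m}$ là où vous proposez des décompositions ou calculs explicites — et votre plan pour les trois cas exceptionnels impairs, correct mais encore à détailler, se simplifie comme dans l'article en observant que les seules solutions de taille $3$ étant $(\pm\overline{1},\pm\overline{1},\pm\overline{1})$, seul le découpage $(4,4)$ est à écarter, ce qui se fait via la contrainte $\overline{k^{2}}=\overline{0}$ mise en défaut par l'imparité de $k$.
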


Lors de la classification des solutions monomiales minimales irréductibles dans les cas où $N$ est la puissance d'un nombre premier, on a vu apparaître les solutions $\overline{2a}$-monomiales minimales avec $a$ premier avec $N$. Cela amène à introduire le concept suivant :

\begin{definition}
\label{29}

Soit $N \geq 2$.
 
\begin{itemize}
\item Si $N$ est impair ou divisible par 4. $N$ est dit semi monomialement irréductible si, pour tout entier $a$ premier avec $N$, la solution $\overline{2a}$-monomiales minimales de \eqref{p} est irréductible.
\item Si $N$ est pair non divisible par 4. $N$ est dit semi monomialement irréductible si, pour tout entier $a$ premier avec $\frac{N}{2}$, la solution $\overline{2a}$-monomiale minimale de \eqref{p} est irréductible.
\end{itemize}

\noindent Dans le cas contraire, on dit que $N$ est semi monomialement réductible.

\end{definition}

\begin{remarks}
{\rm i) On doit scinder dans la définition précédente le cas $N$ pair non divisible par 4 des autres. En effet, si $N$ est pair non divisible par 4 et si $a$ est premier avec $N$ alors $\overline{2a}=\overline{2\left(a+\frac{N}{2}\right)}$ et $a+\frac{N}{2}$ est pair, donc non premier avec $N$. Une fois cette distinction effectuée, la définition devient cohérente puisque le caractère semi monomialement irréductible ne dépend plus des représentants choisis pour étudier l'irréductibilité des solutions.
\\
\\ii) 2 n'est pas semi monomialement réductible car si $a=1$ alors $\overline{2a}=\overline{0}$ et la solution $\overline{0}$-monomiale minimale n'est pas irréductible.
}
\end{remarks}

Cette définition peut sembler un peu artificielle mais elle constitue en fait une généralisation intéressante des deux concepts précédents comme le montre le résultat ci-dessous : 

\begin{theorem}
\label{211}

Soit $N \geq 3$.
\\
\\i) $N$ impair. $N$ est semi monomialement irréductible si et seulement si $N$ est la puissance d'un nombre premier impair, c'est-à-dire $N$ est semi monomialement irréductible si et seulement si $N$ est quasi monomialement irréductible.
\\
\\ii) Si $N=2p^{n}$ avec $p \in \mathbb{P}$ et $n \geq 1$ ou si $N=2\times 3^{a}5^{b}$ avec $a,b \geq 0$ et $a+b \neq 0$ alors $N$ est semi monomialement irréductible.
\\
\\iii) Si $N=2^{a}3^{b}5^{c}7^{d}17^{e}31^{f}127^{g}$ avec $a \geq 2$ et $(b,c,d,e,f,g) \in \mathbb{N}^{6}$ alors $N$ est semi monomialement irréductible.

\end{theorem}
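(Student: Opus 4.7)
The plan is to treat the three parts separately, leveraging the already-stated classification results, especially Theorem \ref{27}.

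For (i), I would reduce directly to Theorem \ref{27}. Since $N$ is odd, $2$ is a unit in $\mathbb{Z}/N\mathbb{Z}$, so multiplication by $2$ is a bijection of $(\mathbb{Z}/N\mathbb{Z})^{*}$. Therefore the set of residues $\{\overline{2a}\;\vert\;\gcd(a,N)=1\}$ coincides with $\{\overline{k}\;\vert\;\gcd(k,N)=1\}$, and the collection of $\overline{2a}$-monomial minimal solutions (with $a$ coprime to $N$) is exactly the collection of $\overline{k}$-monomial minimal solutions with $k$ coprime to $N$. Hence, for odd $N$, semi monomial irreducibility is equivalent to quasi monomial irreducibility, and by Theorem \ref{27} this happens iff $N$ is a power of an odd prime.

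For (ii), I would use a reduction/lifting argument based on the Chinese Remainder Theorem. Consider first $N=2p^{n}$ with $p$ an odd prime, and $a$ coprime to $p^{n}$. Under the projection $\pi\colon\mathbb{Z}/N\mathbb{Z}\to\mathbb{Z}/p^{n}\mathbb{Z}$, $\overline{2a}$ becomes a unit (since $p$ is odd), so by Theorem \ref{27} the $\overline{2a}$-monomial minimal solution modulo $p^{n}$ is irreducible. A computation of the minimal size modulo $N$ via $\mathrm{lcm}$ with the size modulo $2$ (which is $2$, since $\overline{2a}=\overline{0}$ mod $2$), combined with the behavior of $\oplus$ under projection, shows that a reducibility of the solution modulo $N$ would force a decomposition of its reduction modulo $p^{n}$ with both parts of size $\geq 3$, contradicting irreducibility mod $p^{n}$. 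The case $N=2\cdot 3^{a}5^{b}$ follows the same pattern, invoking Theorem \ref{28} for the $3$-part (using that $\overline{2a}$ is not a multiple of $3$ when $a$ is coprime to $3$, so the reducible cases listed in Theorem \ref{28} are excluded) and the analogous classification for $N=2\cdot 5^{b}$.

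For (iii), since $a\geq 2$ we have $4\mid N$, so we must check irreducibility of $\overline{2a}$-monomial minimal solutions for $a$ coprime to $N$. I would decompose $\mathbb{Z}/N\mathbb{Z}$ via CRT into its prime power factors $2^{a},3^{b},5^{c},7^{d},17^{e},31^{f},127^{g}$, and argue as in (ii) that a nontrivial decomposition modulo $N$ would induce one modulo at least one factor. For each prime power factor one can then invoke the full prime-power classification (Theorem \ref{33}, referenced earlier), using that $\overline{2a}$ reduces to a unit modulo each odd prime factor and to a very specific element (an odd multiple of $2$) modulo $2^{a}$. The peculiar list $\{3,5,7,17,31,127\}$ will enter through the explicit form of the prime-power classification: these are precisely the primes $p=2^{k}\pm 1$ for which the minimal monomial solutions associated with $\overline{2a}$ have sizes too small (typically $\leq 3$ after accounting for the $2^{a}$ factor, in view of the relation $2a\equiv\pm 1\bmod p$ for a unique $a$, plus the extra flexibility granted by the Mersenne/Fermat structure) to admit a decomposition into two blocks of size $\geq 3$.

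The main obstacle is clearly part (iii): explaining why no other odd primes can appear. I expect this will require, alongside the CRT reduction, a direct analysis of the minimal size $n_{k}$ such that $M_{n_{k}}(\overline{k},\ldots,\overline{k})=\pm Id$ in $SL_{2}(\mathbb{Z}/p\mathbb{Z})$ when $\overline{k}=\overline{2a}$, since irreducibility is automatic exactly when $n_{k}\leq 3$ but, modulo $N$ with the $2^{a}$ factor present, the effective minimal size is doubled or multiplied by the order mod $2^{a}$, and controlling whether this combined size can be written as $m+l-2$ with $m,l\geq 3$ is where the Mersenne/Fermat arithmetic of the six listed primes becomes indispensable.
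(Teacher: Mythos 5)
Part (i) of your proposal is correct and is exactly the paper's argument (semi monomial irreducibility for odd $N$ coincides with quasi monomial irreducibility, then apply Theorem \ref{27}). The rest has two genuine gaps. The first is structural: the reduction "a decomposition modulo $N$ induces a decomposition modulo a prime-power factor, contradicting irreducibility there" is false as stated, and the paper itself exhibits the counterexample $N=42=2\times 3\times 7$, where $6$ and $14$ are both semi monomially irréductibles yet the $\overline{10}$-monomial minimal solution of $(E_{42})$ (of size $24$) is reduced by $(\overline{28},\overline{10},\overline{10},\overline{10},\overline{10},\overline{28})$, whose reductions modulo $6$ and modulo $7$ reduce nothing. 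What projection to each coprime factor actually yields (via Lemma \ref{31}) is only $x\equiv 0$ or $x\equiv k$ modulo that factor; the two mixed cases ($x\equiv 0$ modulo one factor and $x\equiv k$ modulo the other) must be excluded by testing a third, \emph{composite} divisor ($2\times 3^{m}$, $2^{b}p$, or $30$ in the paper's proofs of Proposition \ref{37}, Theorem \ref{54} and Proposition \ref{57}), and the base cases for those composite divisors require separate work. In particular, for $N=2\times 3^{a}5^{b}$ with $a,b\geq 1$ you need the hand verification at $N=30$ (notably that the size-$30$ solution for $\overline{8}$ admits no reduction of the form $(\overline{18},\overline{8},\ldots,\overline{8},\overline{18})$ or $(\overline{20},\overline{8},\ldots,\overline{8},\overline{20})$), which cannot be deduced from the prime-power classifications; your proposal omits both the mixed-case analysis and this base case.

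The second gap is that your proposed mechanism for part (iii) — that $3,5,7,17,31,127$ are Mersenne/Fermat primes for which the relevant minimal monomial solutions have size "too small (typically $\leq 3$)" — is not the right criterion and would not close the argument. The paper's actual criterion (Proposition \ref{53}) is: for $N=4p^{m}$ and $a$ odd prime to $p$, the $\overline{2a}$-monomial minimal solution is reducible if and only if the size $h$ of the $(2a+p\mathbb{Z})$-monomial minimal solution of $(E_{p})$ satisfies $h\equiv 2~[4]$. This rests on Lemma \ref{520} (the size modulo $p^{n}$ equals $hp^{s}$, hence is congruent to $\pm h$ modulo $4$, and even-size minimal monomial solutions modulo odd prime powers evaluate to $-Id$), and the six primes enter only because, by the explicit tables of Lemma \ref{55}, \emph{none} of their nonzero minimal monomial solutions has size congruent to $2$ modulo $4$ — the sizes are not small (they reach $127$ and $64$ for $p=127$), and the condition concerns every residue prime to $p$, since $2a$ runs over all units as $a$ does. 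Note also that (iii) asserts only a sufficient condition, so you are not required to show that no other primes can occur; that question is left as a conjecture in the paper.
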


Ce théorème sera prouvé dans la section \ref{semi}. En particulier, on constate, en combinant ce résultat au théorème \ref{27}, que les entiers quasi monomialement irréductibles sont semi monomialement irréductibles. Ainsi, la notion d'entier semi monomialement irréductible généralise celle d'entier quasi monomialement irréductible. En revanche, on ne dispose pas d'une classification complète de ces entiers dans le cas pair.

\section{Classification des entiers monomialement irréductibles et quasi monomialement irréductibles}
\label{quasi}

L'objectif de cette section est de démontrer les théorèmes \ref{26} et \ref{27} et de présenter les résultats partiels obtenus précédemment.

\subsection{Résultats préliminaires}
\label{pre}

Avant de procéder à la preuve des théorèmes de classification présentés dans la section précédente, on a besoin de plusieurs résultats que l'on regroupe ici.
\\
\\ \indent Pour commencer, on commence par rappeler un certain nombre de résultats généraux, obtenus principalement dans \cite{M1, M2}, qui nous seront utiles dans toute la suite :
\begin{itemize}
\item $M_{n}(\overline{-k},\ldots,\overline{-k})=(-1)^{n}M_{n}(\overline{k},\ldots,\overline{k})^{T}$;
\item les solutions $\pm \overline{k}$-monomiales minimales de \eqref{p} ont la même taille;
\item la solution $\overline{k}$-monomiale minimale de \eqref{p} est irréductible si et seulement si la solution $\overline{-k}$-monomiale minimale de \eqref{p} est irréductible;
\item les solutions $\pm \overline{1}$-monomiales minimales de \eqref{p} sont irréductibles et ce sont les seules solutions de taille 3;
\item les solutions de \eqref{p} de taille 4 sont de la forme $(\overline{a},\overline{b},\overline{a},\overline{b})$ avec $\overline{ab}=\overline{2}$ et $(\overline{-a},\overline{b},\overline{a},\overline{-b})$ avec $\overline{ab}=\overline{0}$;
\item si $N$ est pair, la solution $\overline{\frac{N}{2}}$-monomiale minimale de \eqref{p} est irréductible.
\\
\end{itemize}

\noindent De plus, si $(\overline{a},\overline{k},\ldots,\overline{k},\overline{b})$ est une solution de \eqref{p} alors $\overline{a}=\overline{b}$ et $\overline{a}(\overline{a}-\overline{k})=\overline{0}$ (voir \cite{M1} proposition 3.15).

\begin{lemma}
\label{30}

Soient $N$ un entier supérieur à 2 et $\overline{k}$ un élément de $\mathbb{Z}/N\mathbb{Z}$. Soit $d$ un diviseur de $N$. La taille de la solution $\overline{k}$-monomiale minimale de \eqref{p} est un multiple de la taille de la solution $(k+d\mathbb{Z})$-monomiale minimale de $(E_{d})$.

\end{lemma}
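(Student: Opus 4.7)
The plan is to reinterpret the minimal size of a $\overline{k}$-monomial solution as the order of a single matrix in a projective group, and then transfer the divisibility via reduction of coefficients.

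First I would recall that, by the very definition of $M_{n}$, one has the factorisation
\[M_{n}(\overline{k},\ldots,\overline{k}) = A_{k}^{n}, \qquad A_{k} := \begin{pmatrix} \overline{k} & -\overline{1} \\ \overline{1} & \overline{0} \end{pmatrix} \in SL_{2}(\mathbb{Z}/N\mathbb{Z}).\]
Thus $(\overline{k},\ldots,\overline{k})$ of length $n$ is a solution of \eqref{p} if and only if $A_{k}^{n} \in \{Id,-Id\}$, i.e.\ the class $[A_{k}]$ has order dividing $n$ in $PSL_{2}(\mathbb{Z}/N\mathbb{Z})$. Hence the size of the $\overline{k}$-monomial minimal solution of \eqref{p} is exactly the order of $[A_{k}]$ in $PSL_{2}(\mathbb{Z}/N\mathbb{Z})$; call it $n$. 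The same argument mod $d$ shows that the size of the $(k+d\mathbb{Z})$-monomial minimal solution of $(E_{d})$ is the order of $[A_{k}\bmod d]$ in $PSL_{2}(\mathbb{Z}/d\mathbb{Z})$; call it $n'$.

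Next, since $d \mid N$, the canonical ring quotient $\mathbb{Z}/N\mathbb{Z} \twoheadrightarrow \mathbb{Z}/d\mathbb{Z}$ extends entry-wise to a group morphism $\pi : SL_{2}(\mathbb{Z}/N\mathbb{Z}) \to SL_{2}(\mathbb{Z}/d\mathbb{Z})$. Because $\pi(\pm Id) \in \{\pm Id\}$, this morphism descends to a well-defined group morphism $\tilde{\pi} : PSL_{2}(\mathbb{Z}/N\mathbb{Z}) \to PSL_{2}(\mathbb{Z}/d\mathbb{Z})$ mapping $[A_{k}]$ to $[A_{k} \bmod d]$. The general fact that the order of the image of an element under a group morphism divides the order of that element then yields $n' \mid n$, which is the desired conclusion.

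There is no real obstacle here: the only point to verify is that $\{\pm Id\}$ is stable under reduction mod $d$ (the degenerate case $d=2$, where $-Id \equiv Id$, is actually harmless since both classes go to the identity). Once the minimal sizes have been identified with orders in $PSL_{2}$, the statement becomes an immediate consequence of the functoriality of reduction of coefficients.
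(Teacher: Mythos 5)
Your proposal is correct and follows essentially the same route as the paper: the paper likewise observes that $M_r(\overline{k},\ldots,\overline{k})=\pm Id$ reduces modulo $d$ to $\pm Id$, and that the minimal size modulo $d$ is the order of $M_1(k+d\mathbb{Z})$ in $PSL_2(\mathbb{Z}/d\mathbb{Z})$, whence the divisibility. Your phrasing via the induced morphism $PSL_2(\mathbb{Z}/N\mathbb{Z})\to PSL_2(\mathbb{Z}/d\mathbb{Z})$ is just a slightly more functorial packaging of the same argument.
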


\begin{proof}

Soit $r$ la taille de la solution $\overline{k}$-monomiale minimale de \eqref{p}. Il existe $\epsilon \in \{-1, 1\}$ tel que $M_{r}(\overline{k},\ldots,\overline{k})=\overline{\epsilon} Id$. Comme $d$ divise $N$, $M_{r}(k+d\mathbb{Z},\ldots,k+d\mathbb{Z})=(\epsilon+d\mathbb{Z}) Id$. De plus, la taille de la solution $(k+d\mathbb{Z})$-monomiale minimale de $(E_{d})$ est l'ordre de $M_{1}(k+d\mathbb{Z})$ dans $PSL_{2}(\mathbb{Z}/d \mathbb{Z})$. Donc, $r$ est un multiple de la taille de la solution $(k+d\mathbb{Z})$-monomiale minimale de $(E_{d})$.

\end{proof}

\begin{lemma}
\label{31}

Soient $N$ un entier supérieur à 2 et $\overline{k}$ un élément de $\mathbb{Z}/N\mathbb{Z}$. Soit $n$ la taille de la solution $\overline{k}$-monomiale minimale de \eqref{p}. 
\\
\\A) Soit $m$ un entier naturel non nul.
\\
\\i) Si $(\overline{a},\overline{k},\ldots,\overline{k},\overline{b}) \in (\mathbb{Z}/N\mathbb{Z})^{nm}$ est une solution de \eqref{p} alors $\overline{a}=\overline{b}=\overline{k}$.
\\
\\ii) Il n'y a pas de solution de \eqref{p} de la forme $(\overline{a},\overline{k},\ldots,\overline{k},\overline{b})$ de taille $nm+1$.
\\
\\iii) Si $(\overline{a},\overline{k},\ldots,\overline{k},\overline{b}) \in (\mathbb{Z}/N\mathbb{Z})^{nm+2}$ est une solution de \eqref{p} alors $\overline{a}=\overline{b}=\overline{0}$.
\\
\\B) On suppose que la solution $\overline{k}$-monomiale minimale de \eqref{p} est irréductible. Le $l$-uplet $(\overline{a},\overline{k},\ldots,\overline{k},\overline{a}) \in (\mathbb{Z}/N\mathbb{Z})^{l}$ est une solution de \eqref{p} si et seulement si une deux conditions suivantes est vérifiée :
\begin{itemize}
\item $l \equiv 0 [n]$ et $\overline{a}=\overline{k}$;
\item $l \equiv 2 [n]$ et $\overline{a}=\overline{0}$.

\end{itemize}

\end{lemma}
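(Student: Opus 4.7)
Le plan est de partir de la factorisation $M_l(\overline{a},\overline{k},\ldots,\overline{k},\overline{b})=M(\overline{b})\,M(\overline{k})^{l-2}\,M(\overline{a})$, où $M(\overline{x}):=\begin{pmatrix} \overline{x} & -1 \\ 1 & 0 \end{pmatrix}$, et d'exploiter l'identité-clé $M(\overline{k})^n=\overline{\epsilon}\, Id$ avec $\epsilon\in\{-1,1\}$, issue de la minimalité de $n$. En écrivant $l-2=nq+s$ avec $0\leq s<n$, on obtient $M(\overline{k})^{l-2}=\overline{\epsilon}^{\,q}\, M(\overline{k})^s$, ce qui ramène toute équation du type $M_l(\overline{a},\overline{k},\ldots,\overline{k},\overline{b})=\pm Id$ à un calcul matriciel de taille $2\times 2$ impliquant uniquement $\overline{a}$, $\overline{b}$ et $M(\overline{k})^s$.

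Pour la partie A, je traiterais les trois cas par calcul direct. Dans (i) on a $s=n-2$, de sorte que $M(\overline{k})^{n-2}=\overline{\epsilon}\, M(\overline{k})^{-2}$ ; l'examen des coefficients hors-diagonale du produit $M(\overline{b})M(\overline{k})^{-2}M(\overline{a})$ fait apparaître $\overline{b-k}$ et $\overline{k-a}$, dont l'annulation impose $\overline{a}=\overline{b}=\overline{k}$. Dans (ii) on a $s=n-1$ et la matrice $M(\overline{b})M(\overline{k})^{-1}M(\overline{a})$ se révèle de la forme $\begin{pmatrix} \ast & -1 \\ 1 & 0 \end{pmatrix}$, ce qui empêche l'égalité à $\pm Id$ dès que $N\geq 2$. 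Dans (iii) on a $s=0$ et annuler les coefficients hors-diagonale de $M(\overline{b})M(\overline{a})$ impose $\overline{a}=\overline{b}=\overline{0}$.

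Pour la partie B, la condition $M_l(\overline{a},\overline{k},\ldots,\overline{k},\overline{a})=\pm Id$ devient $M(\overline{a})M(\overline{k})^s M(\overline{a})=\pm Id$, et la relation $\overline{a}(\overline{a}-\overline{k})=\overline{0}$ (rappelée en section \ref{pre}) pilote la discussion. Si $\overline{a}=\overline{0}$, alors $M(\overline{0})^2=-Id$ force $M(\overline{k})^s=\pm Id$ ; la minimalité de $n$ impose $s=0$, soit $l\equiv 2 \pmod n$. Si $\overline{a}=\overline{k}$, la relation devient $M(\overline{k})^{s+2}=\pm Id$, d'où $s=n-2$ et $l\equiv 0 \pmod n$. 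Les implications réciproques se lisent dans ces mêmes calculs.

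Le principal obstacle concerne les valeurs $\overline{a}\notin\{\overline{0},\overline{k}\}$ satisfaisant $\overline{a}(\overline{a}-\overline{k})=\overline{0}$, situation qui n'apparaît que lorsque $\mathbb{Z}/N\mathbb{Z}$ admet des diviseurs de zéro. Les sous-cas $s=0$, $s=n-1$ et $s=n-2$ sont alors exclus respectivement par l'implication $M(\overline{a})^2=\pm Id\Rightarrow \overline{a}=\overline{0}$, par la partie A(ii) et par la partie A(i) appliquée avec $\overline{b}=\overline{a}$. Pour $1\leq s\leq n-3$, j'invoquerais l'hypothèse d'irréductibilité en exhibant la décomposition
\[(\underbrace{\overline{k},\ldots,\overline{k}}_{n}) \sim (\overline{a},\underbrace{\overline{k},\ldots,\overline{k}}_{s},\overline{a})\oplus(\overline{k-a},\underbrace{\overline{k},\ldots,\overline{k}}_{n-s-2},\overline{k-a}),\]
dont le premier terme est une solution par hypothèse (taille $s+2\geq 3$). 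Le point technique crucial est de montrer que le second terme, de taille $n-s\geq 3$, est également une solution ; cela découle de $M(\overline{k})^{n-s-2}=\overline{\epsilon}\,(M(\overline{k})^s)^{-1}$ combiné à la relation $\overline{a}^2=\overline{ak}$, qui permettent de ramener $M(\overline{k-a})M(\overline{a})^2 M(\overline{k})^{-2}M(\overline{k-a})$ à $-Id$. La décomposition non triviale ainsi obtenue contredit l'irréductibilité de la solution $\overline{k}$-monomiale minimale.
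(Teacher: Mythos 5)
Your proof is correct and follows essentially the same route as the paper's: part A by direct $2\times 2$ matrix computations (which the paper simply delegates to Lemma 3.4 of \cite{M4}), and part B by extracting from the $l$-tuple a solution $(\overline{a},\overline{k},\ldots,\overline{k},\overline{a})$ of size $s+2<n$ and using it to reduce the $\overline{k}$-monomial minimal solution, contradicting irreducibility. The only differences are organizational: the paper splits on $l \bmod n$ rather than on the value of $\overline{a}$ (so it never needs the relation $\overline{a}(\overline{a}-\overline{k})=\overline{0}$), and it avoids your explicit verification that the complementary factor $(\overline{k-a},\ldots,\overline{k-a})$ is a solution, since writing the known solution as the right-hand $\oplus$-factor makes that automatic from the general property of $\oplus$ recalled in Section \ref{RP}.
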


\begin{proof}

A) Cela découle de calculs matriciels simples (voir \cite{M4} lemme 3.4 pour le détail de ces derniers).
\\
\\B) Il existe $\alpha \in \{-1, 1\}$ tel que $M_{n}(\overline{k},\ldots,\overline{k})=\overline{\alpha}Id$. Soit $(\overline{a},\overline{k},\ldots,\overline{k},\overline{a}) \in (\mathbb{Z}/N\mathbb{Z})^{l}$.
\\
\\Si $l \equiv 0 [n]$ et $\overline{a}=\overline{k}$ alors $M_{l}(\overline{a},\overline{k},\ldots,\overline{k},\overline{a})=M_{n}(\overline{k},\ldots,\overline{k})^{\frac{l}{n}}=\overline{\alpha}^{\frac{l}{n}}Id$. Si $l \equiv 2 [n]$ et $\overline{a}=\overline{0}$ alors $M_{l}(\overline{a},\overline{k},\ldots,\overline{k},\overline{a})=M_{1}(\overline{0})M_{n}(\overline{k},\ldots,\overline{k})^{\frac{l-2}{n}}M_{1}(\overline{0})=-(\overline{\alpha})^{\frac{l-2}{n}}Id$.
\\
\\Si $(\overline{a},\overline{k},\ldots,\overline{k},\overline{a}) \in (\mathbb{Z}/N\mathbb{Z})^{l}$ est une solution de \eqref{p}. Il existe $(q,r) \in \mathbb{N}^{2}$ tel que $l=nq+r$ avec $0 \leq r<n$. Mais aussi, par A), on a $r \neq 1$. Si $r=0$ alors $\overline{a}=\overline{k}$ et si $r=2$ alors $\overline{a}=\overline{0}$. Supposons $r \geq 3$. Il existe $\beta \in \{-1, 1\}$ tel que :
\begin{eqnarray*} 
\overline{\beta}Id &=& M_{l}(\overline{a},\overline{k},\ldots,\overline{k},\overline{a}) \\
  &=& M_{1}(\overline{a})M_{n}(\overline{k},\ldots,\overline{k})^{q}M_{r-2}(\overline{k},\ldots,\overline{k})M_{1}(\overline{a}) \\
	&=& \overline{\alpha}^{q} M_{r}(\overline{a},\overline{k},\ldots,\overline{k},\overline{a}).
\end{eqnarray*}	

\noindent Ainsi, le $r$-uplet $(\overline{a},\overline{k},\ldots,\overline{k},\overline{a})$ est une solution de \eqref{p}. Comme $r>2$, on peut utiliser la solution $(\overline{a},\overline{k},\ldots,\overline{k},\overline{a})$ pour réduire la solution $\overline{k}$-monomiale minimale de \eqref{p}. Comme la solution $\overline{k}$-monomiale minimale de \eqref{p} est irréductible, ceci est absurde.

\end{proof}

\noindent On dispose également des deux résultats assez puissants énoncés ci-dessous :

\begin{theorem}[\cite{M1} Théorème 2.6]
\label{32}

Soit $N \geq 3$. La solution $\overline{2}$-monomiale minimale de \eqref{p} est irréductible de taille $N$ avec \[M_{N}(\overline{2},\ldots,\overline{2})=Id.\]

\end{theorem}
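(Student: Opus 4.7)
Je commencerai par établir par récurrence élémentaire sur $n$, via la relation $M_{n+1}(\overline{2},\ldots,\overline{2}) = M_1(\overline{2})\cdot M_n(\overline{2},\ldots,\overline{2})$, la formule close
\begin{equation*}
M_n(\overline{2},\ldots,\overline{2}) = \begin{pmatrix} \overline{n+1} & -\overline{n} \\ \overline{n} & -\overline{n-1} \end{pmatrix}.
\end{equation*}
La spécialisation à $n=N$ donnera directement $M_N(\overline{2},\ldots,\overline{2}) = Id$, ce qui montre que la taille $r$ de la solution $\overline{2}$-monomiale minimale vérifie $r \leq N$. Pour l'inégalité inverse, je noterai que pour $1 \leq n < N$ les coefficients hors diagonale valent $\pm\overline{n}$ et sont non nuls dans $\mathbb{Z}/N\mathbb{Z}$, ce qui empêche $M_n(\overline{2},\ldots,\overline{2})$ de valoir $\pm Id$ ; d'où $r = N$ et, plus précisément, $M_N(\overline{2},\ldots,\overline{2}) = Id$ (et non $-Id$).

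Pour l'irréductibilité, je raisonnerai par l'absurde. Si $(\overline{2},\ldots,\overline{2})$, de taille $N$, est réductible, la Définition \ref{23} fournit un $m$-uplet $(\overline{a_1},\ldots,\overline{a_m})$ et une solution $(\overline{b_1},\ldots,\overline{b_l})$ de \eqref{p} avec $m,l \geq 3$ et $m+l=N+2$, tels que $(\overline{2},\ldots,\overline{2}) \sim (\overline{a_1},\ldots,\overline{a_m}) \oplus (\overline{b_1},\ldots,\overline{b_l})$. Le point clé à exploiter est que le $N$-uplet $(\overline{2},\ldots,\overline{2})$ est invariant par permutation circulaire et par renversement, donc sa classe d'équivalence modulo $\sim$ se réduit à lui-même ; la somme est par conséquent égale à $(\overline{2},\ldots,\overline{2})$ composante par composante. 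En lisant ces composantes selon la Définition \ref{21}, j'en déduirai en particulier $\overline{b_2} = \cdots = \overline{b_{l-1}} = \overline{2}$, de sorte que $(\overline{b_1},\overline{2},\ldots,\overline{2},\overline{b_l})$ est une solution de \eqref{p} de taille $l$ avec $3 \leq l \leq N-1$.

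Je conclurai enfin par un calcul matriciel direct : en décomposant
\begin{equation*}
M_l(\overline{b_1},\overline{2},\ldots,\overline{2},\overline{b_l}) = M_1(\overline{b_l})\cdot M_{l-2}(\overline{2},\ldots,\overline{2})\cdot M_1(\overline{b_1})
\end{equation*}
et en utilisant la formule close de la première étape, on vérifie sans difficulté que le coefficient en position $(2,2)$ du produit vaut $-\overline{l-1}$, indépendamment des valeurs de $\overline{b_1}$ et $\overline{b_l}$. Exiger que ce produit soit $\pm Id$ imposera alors $l \equiv 0$ ou $l \equiv 2 \pmod{N}$, ce qui est incompatible avec l'encadrement $3 \leq l \leq N-1$, fournissant la contradiction souhaitée. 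L'obstacle principal attendu, de nature essentiellement formelle, est l'identification propre de la somme avec le $N$-uplet uniforme via $\sim$ ; une fois cette étape franchie, les manipulations matricielles restantes relèvent de calculs routiniers.
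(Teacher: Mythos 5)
Le théorème \ref{32} n'est pas démontré dans ce texte : il est importé tel quel de \cite{M1} (Théorème 2.6), si bien qu'il n'y a pas de preuve interne à laquelle confronter la vôtre. Votre argument, évalué pour lui-même, est correct et complet. La formule close $M_n(\overline{2},\ldots,\overline{2})=\left(\begin{smallmatrix} \overline{n+1} & \overline{-n} \\ \overline{n} & \overline{-(n-1)} \end{smallmatrix}\right)$ se vérifie bien par récurrence, elle donne à la fois $M_N=Id$ et la non-annulation des coefficients hors diagonale pour $1\leq n<N$, d'où la taille exacte $N$. Pour l'irréductibilité, l'observation que la classe d'équivalence du uplet constant sous $\sim$ est un singleton est le point qu'il fallait justifier, et vous le faites ; la lecture composante par composante de la somme $\oplus$ ramène alors bien la réductibilité à l'existence d'une solution $(\overline{b_1},\overline{2},\ldots,\overline{2},\overline{b_l})$ avec $3\leq l\leq N-1$, et le calcul du coefficient $(2,2)$, égal à $\overline{-(l-1)}$ indépendamment de $\overline{b_1}$ et $\overline{b_l}$, exclut $l\equiv 0,2\ [N]$ sur cette plage. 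Notez que cette dernière étape est un cas particulier explicite de la machinerie générale que le présent article rappelle (proposition 3.15 de \cite{M1} et lemme \ref{31} A), utilisés ailleurs pour le même genre d'exclusion) ; votre version par calcul direct a le mérite d'être autonome et de ne reposer sur aucun résultat extérieur.
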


\begin{theorem}[\cite{M4} Théorème 2.6]
\label{33}

Soient $p$ un nombre premier, $n$ un entier naturel non nul et $N=p^{n}$. Soit $k \in \mathbb{Z}$.
\\
\\i) Si $p \neq 2$. La solution $\overline{k}$-monomiale minimale de \eqref{p} est irréductible si et seulement si $p$ ne divise pas $k$. En particulier, \eqref{p} a $\varphi(p^{n})=p^{n-1}(p-1)$ solutions monomiales minimales irréductibles.
\\
\\ii) Si $p=2$. La solution $\overline{k}$-monomiale minimale de \eqref{p} est irréductible si et seulement si  une des propriétés suivantes est vérifiée :
\begin{itemize}
\item $k$ est impair; 
\item $\overline{k}=\overline{2^{n-1}}$;
\item $n \geq 2$ et il existe un entier impair $a$ tel que $k=2a$.
\end{itemize}

\noindent En particulier, si $n \geq 3$, \eqref{p} a $3 \times 2^{n-2}+1$ solutions monomiales minimales irréductibles.

\end{theorem}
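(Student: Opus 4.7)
The plan is to treat the odd $p$ and $p = 2$ cases in parallel, combining Théorème \ref{32} (the $\overline{2}$-monomial minimal has size $N$) and the irreducibility of the $\overline{N/2}$-monomial minimal recalled in Subsection \ref{pre} as base cases, together with Lemmes \ref{30} and \ref{31}, to either rule out or explicitly construct nontrivial $\oplus$-decompositions. Throughout, denote by $s(\overline{k})$ the size of the $\overline{k}$-monomial minimal solution of \eqref{p}.

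For the irreducibility direction, the first step is to compute $s(\overline{k})$ by induction on $n$. When $p$ is odd and $p \nmid k$ (or $p = 2$ and $k$ is odd), Lemme \ref{30} combined with Théorème \ref{32} pins $s(\overline{k})$ down: at the base $n = 1$ a conjugation/scaling argument in $PSL_2(\mathbb{F}_p)$ relates $M_1(\overline{k})$ to $M_1(\overline{2})$, and the lift to $p^n$ multiplies the size by a factor of $p$ at each step. Once $s(\overline{k})$ is known, any reduction $(\overline{k},\ldots,\overline{k}) \sim (\overline{a_1},\ldots,\overline{a_m}) \oplus (\overline{b_1},\ldots,\overline{b_l})$ with $m, l \geq 3$ would, after unfolding the definition of $\oplus$, produce a subsolution of the form $(\overline{a},\overline{k},\ldots,\overline{k},\overline{a})$ of size strictly between $2$ and $s(\overline{k})$, which Lemme \ref{31}(B) forbids.

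For the reducibility direction when $p \mid k$ (and the $p = 2$ exceptions do not apply), write $k = p^j a$ with $\gcd(a,p) = 1$ and $1 \leq j \leq n-1$. Modulo $p^{n-j}$ the pattern $(\overline{k},\ldots,\overline{k})$ becomes the zero pattern, so Lemme \ref{30} applied to the divisor $p^{n-j}$ combined with Lemme \ref{31}(A)(iii) locates an intermediate index at which one can cut $(\overline{k},\ldots,\overline{k})$ into two pieces of the form $(\overline{0},\overline{k},\ldots,\overline{k},\overline{0})$, each of size at least $3$ and each a genuine solution of \eqref{p}, witnessing reducibility. The exceptional cases at $p = 2$ are handled separately: $\overline{k} = \overline{2^{n-1}}$ is precisely the $\overline{N/2}$-case from Subsection \ref{pre}, while $k = 2a$ with $a$ odd and $n \geq 2$ requires an ad hoc order computation for $M_1(\overline{2a})$ in $PSL_2(\mathbb{Z}/2^n\mathbb{Z})$ in order to preclude any nontrivial $\oplus$-decomposition. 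Summing the eligible classes then yields $\varphi(p^n)$ for $p$ odd, and $2^{n-1} + 1 + 2^{n-2} = 3 \cdot 2^{n-2} + 1$ for $p = 2$, $n \geq 3$.

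The main obstacle will be the explicit reducibility construction in the $p \mid k$ case, namely verifying that the proposed cut-point indeed produces two valid solutions both of size at least $3$, together with the symmetric difficulty of proving irreducibility in the exceptional $p = 2$, $k = 2a$ with $a$ odd case, since this does not fall under the generic "$p \nmid k$" umbrella and must be established by a dedicated matrix calculation.
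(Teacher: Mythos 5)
First, note that the paper itself does not prove Théorème \ref{33}: it is imported verbatim from \cite{M4} (Théorème 2.6), so there is no in-paper proof to match your sketch against; it can only be judged on its own terms, and it has genuine gaps. The most serious is a circularity in the irreducibility direction. Lemme \ref{31}(B), which you invoke to forbid an intermediate subsolution $(\overline{a},\overline{k},\ldots,\overline{k},\overline{a})$ of size $3\leq l'\leq s(\overline{k})-1$, carries as a hypothesis precisely that the $\overline{k}$-monomial minimal solution is irreducible, so it cannot be used to establish that irreducibility. Part (A) of that lemma is hypothesis-free but only constrains sizes congruent to $0$, $1$ or $2$ modulo $s(\overline{k})$, hence says nothing about the range $[3,s(\overline{k})-1]$; ruling out solutions of those intermediate sizes is the actual content of the theorem and your sketch leaves it untouched. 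A second error is the claim that a conjugation or scaling argument relates $M_{1}(\overline{k})$ to $M_{1}(\overline{2})$ and pins down $s(\overline{k})$: these matrices are not conjugate in $PSL_{2}(\mathbb{F}_{p})$ in general and the sizes depend strongly on $k$ (for $p=7$ they are $3$, $7$, $4$ for $k=1,2,3$; see the tables in the proof of Lemme \ref{55}). Likewise the lift from $p^{n}$ to $p^{n+1}$ multiplies the size by $p$ \emph{or} by $1$ (Lemme \ref{520}(i)), not systematically by $p$.

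The reducibility direction also fails as described. A tuple $(\overline{0},\overline{k},\ldots,\overline{k},\overline{0})$ of size $l'$ is a solution of \eqref{p} if and only if $M_{l'-2}(\overline{k},\ldots,\overline{k})=\pm Id$, i.e.\ if and only if $l'\equiv 2$ modulo the minimal size $r$; the smallest admissible $l'\geq 3$ is therefore $r+2>r$, so no cut into two zero-ended pieces can ever reduce the size-$r$ minimal solution. (Also, for $k=p^{j}a$ with $a$ prime to $p$ the pattern vanishes modulo $p^{j}$, not modulo $p^{n-j}$.) The reduction that actually works is the one recorded in Lemme \ref{41}: for $p$ odd and $k=ap^{j}$ one exhibits a solution of size $4p^{n-j-1}<2p^{n-j}$ with endpoints $\overline{ap^{j}-2ap^{n-1}}$, a specific construction that your sketch does not recover and which is the heart of the reducibility half. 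Your final counts ($\varphi(p^{n})$ and $3\times 2^{n-2}+1$) are correct, but both directions of the equivalence still require real arguments.
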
 

\subsection{Premiers éléments de classification des entiers monomialement irréductibles}

On va ici détailler les résultats que l'on avait déjà obtenus sur les entiers monomialement irréductibles. Pour commencer à classifier ces derniers, on avait notamment démontré le résultat intermédiaire ci-dessous :

\begin{proposition}[\cite{M3} propositions 3.7 et 3.8]
\label{34}

Soit $N \geq 2$. 
\begin{itemize}
\item Si $N$ est divisible par 16 alors la solution $\overline{\frac{N}{4}}$-monomiale minimale de \eqref{p} est réductible et $N$ est monomialement réductible.
\item Si $N$ est divisible par le carré d'un nombre premier impair $p$ alors la solution $\overline{\frac{N}{p}}$-monomiale minimale de \eqref{p} est réductible et $N$ est monomialement réductible.
\end{itemize}

\end{proposition}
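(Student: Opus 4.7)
The plan is to prove reducibility uniformly in both cases by exhibiting an explicit decomposition that exploits the common algebraic feature $\overline{k}^{2}=\overline{0}$. This holds automatically in each setting: if $16 \mid N$ and $k=N/4$ then $k^{2}=N\cdot(N/16)\equiv 0 \pmod{N}$, and if $p^{2}\mid N$ for an odd prime $p$ and $k=N/p$ then $k^{2}=N\cdot(N/p^{2})\equiv 0 \pmod{N}$. Writing $n$ for the size of the $\overline{k}$-monomial minimal solution of \eqref{p}, the aim is to establish
\[
(\overline{k},\ldots,\overline{k})_{n} \sim (\overline{-k},\overline{k},\overline{k},\overline{-k}) \oplus (\overline{2k},\overline{k},\ldots,\overline{k},\overline{2k}),
\]
with the second tuple of length $n-2$.

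The first step is to determine $n$. Setting $A := M_{1}(\overline{k})$, a straightforward induction on $j$ using $\overline{k}^{2}=\overline{0}$ yields
\[
A^{2j}=(-1)^{j}\begin{pmatrix} \overline{1} & \overline{jk} \\ -\overline{jk} & \overline{1} \end{pmatrix}, \qquad A^{2j+1}=(-1)^{j}\begin{pmatrix} \overline{(j+1)k} & -\overline{1} \\ \overline{1} & \overline{jk} \end{pmatrix}.
\]
The odd powers have off-diagonal entries $\pm\overline{1}$ and hence are never $\pm Id$, so any $r$ with $A^{r}=\pm Id$ must be even and the smallest such $r$ is twice the additive order of $\overline{k}$ in $\mathbb{Z}/N\mathbb{Z}$. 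This gives $n=8$ in the first case and $n=2p$ in the second; in both cases $n-2\geq 3$, as required for the size condition of Definition \ref{23}.

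The second step is to check that the two proposed blocks are solutions of \eqref{p}. The $4$-tuple $(\overline{-k},\overline{k},\overline{k},\overline{-k})$ fits the second family of size-$4$ solutions recalled in the preliminaries, namely $(\overline{-a},\overline{b},\overline{a},\overline{-b})$ with $\overline{ab}=\overline{0}$, by taking $\overline{a}=\overline{b}=\overline{k}$ and using $\overline{k}^{2}=\overline{0}$. For the longer block, set $B:=M_{1}(\overline{2k})$: writing $M_{n-2}(\overline{2k},\overline{k},\ldots,\overline{k},\overline{2k}) = B A^{n-4} B$ and substituting the explicit form of $A^{n-4}$ obtained above, a direct computation using $\overline{k}^{2}=\overline{0}$ yields $B A^{n-4} B = \pm Id$. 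The key simplifications making the remaining off-diagonal terms vanish are $\overline{4k}=\overline{0}$ in the first case (since $4k=N$) and $\overline{pk}=\overline{0}$ in the second (since $pk=N$).

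Finally, Definition \ref{21} gives
\[
(\overline{-k},\overline{k},\overline{k},\overline{-k}) \oplus (\overline{2k},\overline{k},\ldots,\overline{k},\overline{2k}) = (\overline{k},\overline{k},\overline{k},\overline{k},\overline{k},\ldots,\overline{k}),
\]
a tuple of length $4+(n-2)-2=n$ identical to the minimal monomial solution. Since both factors have size $\geq 3$, this is a valid reduction in the sense of Definition \ref{23}; hence the $\overline{k}$-monomial minimal solution is reducible and, by Definition \ref{25}, $N$ is monomially reducible. The main technical hurdle is the clean determination of $n$: a priori one only knows that the order of $M_{1}(\overline{k})$ in $PSL_{2}(\mathbb{Z}/N\mathbb{Z})$ divides several candidates, and the explicit matrix formulas above are what simultaneously rule out shorter monomial solutions and supply the second block of the decomposition.
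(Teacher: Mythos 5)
Your proof is correct: the identities $A^{2j}=(-1)^{j}\bigl(\begin{smallmatrix}\overline{1} & \overline{jk}\\ -\overline{jk} & \overline{1}\end{smallmatrix}\bigr)$ and $BA^{n-4}B=(-1)^{\frac{n-4}{2}+1}\bigl(\begin{smallmatrix}\overline{1} & \overline{\frac{n}{2}k}\\ -\overline{\frac{n}{2}k} & \overline{1}\end{smallmatrix}\bigr)$ check out under $\overline{k}^{2}=\overline{0}$, giving $n=8$ (resp.\ $n=2p$) and a valid reduction by the two blocks $(\overline{-k},\overline{k},\overline{k},\overline{-k})$ and $(\overline{2k},\overline{k},\ldots,\overline{k},\overline{2k})$, both of size at least $3$. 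The paper itself gives no proof of this proposition — it only cites \cite{M3} — so there is nothing to compare line by line; note only that your $4$-block $(\overline{-k},\overline{k},\overline{k},\overline{-k})$ is exactly the reducing solution that Lemme \ref{41} produces in the prime-power case ($k=n-1$ there), so your argument is the same mechanism packaged as a single uniform computation covering both the $16\mid N$ and $p^{2}\mid N$ cases.
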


Cette proposition, combinée à d'autres résultats, nous avait permis d'obtenir le théorème suivant qui donnait déjà un certain nombre d'éléments de classification.

\begin{theorem}[\cite{M3}, Théorème 2.7]
\label{35} 

Soit $N \geq 2$. On pose \[\Omega=\{107, 163, 173, 277, 283, 317, 347, 523, 557, 563, 613, 653, 733, 773, 787, 877, 907, 997\}.\]

\noindent i) Si $N$ est premier ou si $N \in \{4, 6, 8, 12, 24\}$ alors $N$ est monomialement irréductible.
\\
\\ii) On suppose $N$ pair. L'entier $N$ est monomialemnt irréductible si et seulement si $N \in \{2, 4, 6, 8, 12, 24\}$.
\\
\\iii) On suppose $N$ impair non premier. Si $N$ est monomialement irréductible alors $N$ est de la forme $\prod_{i=1}^{r} p_{i}$ où $r \geq 2$ et les $p_{i}$ sont des nombres premiers impairs deux à deux distincts vérifiant les conditions suivantes :
\begin{itemize}
\item $\forall i \in [\![1;r]\!]$, $p_{i} \equiv \pm 3 [5]$; 
\item $\forall i \in [\![1;r]\!]$, $p_{i} \equiv \pm 3 [8]$;
\item si $p_{i} \leq 1000$ alors $p_{i} \in \Omega$.
\end{itemize}

\end{theorem}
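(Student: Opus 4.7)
The plan is to derive the classification by combining the structural restrictions coming from Proposition \ref{34} with a case analysis driven by the $2$-adic valuation of $N$ and by congruence conditions on the odd prime divisors. Proposition \ref{34} immediately forces any monomially irreducible $N$ to be of the form $N = 2^{a} p_{1}\cdots p_{r}$ with $0 \le a \le 3$ and the $p_{i}$ pairwise distinct odd primes, which organises the argument into a handful of shapes to inspect.

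For part (i), if $N$ is prime apply Theorem \ref{33}(i) with $n=1$; for $N \in \{4,8\}$ apply Theorem \ref{33}(ii); and for $N \in \{6,12,24\}$ the irreducibility is checked case by case on each $\overline{k} \in \mathbb{Z}/N\mathbb{Z}$, first computing the size $n_{\overline{k}}$ of the $\overline{k}$-monomial minimal solution by means of Lemma \ref{30}, then ruling out any $\oplus$-reduction by means of Lemma \ref{31}(B) combined with the list of solutions of sizes $3$ and $4$ recalled in Section \ref{pre}. Part (ii) is then proved by exhibiting, for each even $N$ outside $\{2,4,6,8,12,24\}$ and still compatible with the preceding restriction, an explicit reducible nonzero monomial minimal solution: the natural candidates are $\overline{k}=\overline{N/d}$ for a suitable divisor $d$ of $N$, since Lemma \ref{30} then forces the minimal size in $\mathbb{Z}/N\mathbb{Z}$ to be a multiple of a small integer, allowing the insertion of a $(\overline{0},\overline{0})$-block via $\oplus$ and producing a legal decomposition into two pieces of sizes $\ge 3$.

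For part (iii), start from $N = p_{1}\cdots p_{r}$ (squarefree by Proposition \ref{34}) with $r \ge 2$. The constraint $p_{i} \equiv \pm 3\,[5]$ is obtained from the order of $M_{1}(\overline{3})$ in $PSL_{2}(\mathbb{F}_{p_{i}})$: its characteristic polynomial is $X^{2}-3X+1$, with discriminant $5$, so whether its eigenvalues lie in $\mathbb{F}_{p_{i}}$ or in $\mathbb{F}_{p_{i}^{2}}$ depends on the residue of $p_{i}$ modulo $5$; when $p_{i} \equiv \pm 1\,[5]$ the resulting order is small enough to build, via Lemma \ref{30} and a suitable insertion of $(\overline{0},\overline{0})$, an explicit reduction of the $\overline{3}$-monomial minimal solution of $(E_{N})$, exhibiting $N$ as monomially reducible. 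The constraint $p_{i} \equiv \pm 3\,[8]$ is obtained analogously from a monomial generator whose characteristic polynomial has discriminant involving $2$, so that the relevant quadratic residue is $2$ modulo $p_{i}$. Finally, the condition that $p_{i} \le 1000$ forces $p_{i} \in \Omega$ is a direct computer enumeration: one runs through each odd prime $p \le 1000$ satisfying the two congruence conditions and decides algorithmically whether $p$ is monomially irreducible, keeping only those that pass the test.

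The main obstacle is part (iii): one must choose $\overline{k}$ in $\mathbb{Z}/N\mathbb{Z}$ so that a small order of $M_{1}(\overline{k})$ modulo a single prime factor $p_{i}$ yields an $\oplus$-decomposition of the $\overline{k}$-monomial minimal solution modulo $N$ into two blocks both of size $\ge 3$, and then verify that both factors are themselves admissible solutions of $(E_{N})$. Once this is achieved, the characterisation of the solutions of sizes $3$ and $4$, Lemma \ref{31}(B), and the multiplicativity provided by Lemma \ref{30} combine to close the argument.
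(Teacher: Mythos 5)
Un préalable : le présent article ne démontre pas cet énoncé, il le cite de \cite{M3} et se borne à en rappeler la méthode (formules explicites pour les coefficients $K_{n}(\overline{lm\pm 2})$ pour le point ii), réductions par des solutions de très petite taille --- par exemple de taille 5 dès que $N$ a un facteur premier $\equiv \pm 1\ [5]$ --- pour le point iii)). Votre squelette général (Proposition \ref{34} pour la structure de $N$, Théorème \ref{33} pour les puissances de premiers, conditions de congruence issues du discriminant d'un polynôme caractéristique, énumération informatique pour $\Omega$) est dans le bon esprit, mais le mécanisme de réduction sur lequel vous faites reposer les points ii) et iii) est irrecevable. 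Vous proposez d'obtenir une décomposition \og en insérant un bloc $(\overline{0},\overline{0})$ via $\oplus$ \fg{} : or $(\overline{0},\overline{0})$ est l'élément neutre de $\oplus$ et est de taille $2$, alors que la Définition \ref{23} exige que les \emph{deux} morceaux soient de taille $\geq 3$ ; une telle \og insertion \fg{} ne change rien au uplet et ne peut témoigner d'aucune réductibilité. L'idée qui manque est la construction, par le lemme chinois, d'un élément de bordure $x$ dont les résidus modulo les différents facteurs de $N$ sont prescrits ($x\equiv 0$ modulo l'un, $x\equiv k$ ou $\pm 2$ modulo l'autre) de sorte que $(\overline{x},\overline{k},\ldots,\overline{k},\overline{x})$ soit une solution de taille strictement comprise entre $2$ et la taille monomiale minimale : c'est exactement ce que font les propositions \ref{36} et \ref{51} du présent texte, et ce que les formules pour $K_{n}$ de \cite{M3} accomplissaient.

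Deux autres points sont concrètement faux. Pour ii), vos candidats $\overline{k}=\overline{N/d}$ échouent déjà pour $N=2p$ avec $p\geq 5$ premier : les seuls candidats non nuls sont $\overline{1}$, $\overline{2}$ et $\overline{N/2}$, tous irréductibles (Théorème \ref{32} et section \ref{pre}), alors que $N=10$ est monomialement réductible --- le témoin est $\overline{7}$, qui est \emph{premier avec} $N$ (cf.\ Proposition \ref{36} avec $n=2$, $m=5$). Pour iii), votre description du calcul de $\Omega$ (\og décider algorithmiquement si $p$ est monomialement irréductible \fg) confond deux propriétés distinctes : par le point i), \emph{tout} nombre premier est monomialement irréductible, donc ce test retiendrait tous les premiers $\leq 1000$ et non les dix-huit éléments de $\Omega$. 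Ce qu'il faut tester pour chaque premier $p$, c'est l'absence d'une obstruction locale (existence d'un $k$ et d'une solution bordée de petite taille modulo $p$) qui rendrait réductible une solution monomiale minimale de $(E_{N})$ pour tout composé impair $N$ multiple de $p$ --- c'est une propriété du comportement de $p$ comme \emph{facteur}, pas de $p$ lui-même.
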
 

\subsection{Démonstration du théorème \ref{26}}

Pour démontrer le théorème \ref{35} ii), on avait considéré le cas des entiers $N=km$ avec $m \equiv \pm l [k]$ et $l^{2} \equiv 1 [k]$. Dans ce cadre, on avait établi des formules pour le calcul de $K_{n}(\overline{lm+2})$ si $m \equiv -l [k]$ et $K_{n}(\overline{lm-2})$ si $m \equiv l [k]$, où $K_{n}(\overline{x})$ représente le coefficient de la première ligne et de la première colonne de $M_{n}(\overline{x},\ldots,\overline{x})$. Avec cela, on a pu considérer les cas des entiers $N=km$ avec $k \in \{2, 3, 4, 6, 8, 12, 24\}$ et $m$ impair non divisible par 3 (pour plus de détails voir \cite{M3} section 3.5).  Malheureusement, on ne peut pas généraliser telle quelle cette méthode. En effet, prenons, par exemple, $N=48 \times 5$. L'entier $5$ est impair et non divisible par 3 mais $5^{2} \not \equiv 1 [48]$. Pour terminer notre classification, on va donc considérer ce qui a été fait suivant un autre angle. En effet, dans ce qui précède, on a en réalité considéré $x \equiv \pm 2 [m]$ et $x \equiv \pm 1 [k]$. C'est cette approche qui va nous permettre d'obtenir un nouveau résultat. Une fois celui-ci établi, on n'obtiendra pas de formule pour le calcul des $K_{n}(\overline{x})$ mais on aura une solution monomiale minimale réductible non nulle.

\begin{proposition}
\label{36}

Soient $N=nm$ avec $n$ et $m$ des entiers naturels premiers entre eux différents de 1. On suppose que $m$ est impair et non divisible par 3. Il existe un entier $1 \leq k \leq N-1$ tel que $k$ est premier avec $N$ et tel que la solution $\overline{k}$-monomiale minimale de \eqref{p} est réductible, de taille $6m$ si $n>2$, et $3m$ sinon.

\end{proposition}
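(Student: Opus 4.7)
Je commence par construire $k$ via le théorème des restes chinois : puisque $\gcd(n,m)=1$ et que $m$ est impair non divisible par $3$, il existe un unique $k\in\{1,\ldots,N-1\}$ tel que $\overline{k}\equiv\overline{2}\pmod m$ et $\overline{k}\equiv\overline{1}\pmod n$ ; comme $\gcd(2,m)=\gcd(1,n)=1$, on a $\gcd(k,N)=1$. Pour déterminer la taille $r$ de la solution $\overline{k}$-monomiale minimale, j'applique le lemme \ref{30} avec $d=m$ puis $d=n$. Par le théorème \ref{32}, la solution correspondante modulo $m$ a taille $m$ avec $M_{m}(\overline{2},\ldots,\overline{2})=Id$ ; modulo $n$, elle a taille $3$ avec $M_{3}(\overline{1},\overline{1},\overline{1})=-Id$. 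Donc $3m\mid r$, et un calcul direct donne $M_{3m}(\overline{k},\ldots,\overline{k})\equiv Id\pmod m$ et $M_{3m}(\overline{k},\ldots,\overline{k})\equiv(-Id)^{m}=-Id\pmod n$. Ces signes coïncident modulo $N$ si et seulement si $n=2$, d'où $r=3m$ dans ce cas ; sinon $M_{3m}\not\equiv\pm Id\pmod N$ alors que $M_{6m}=M_{3m}^{2}\equiv Id\pmod N$, donc $r=6m$.

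Pour établir la réductibilité, je construis une décomposition explicite de type $\oplus$. Par théorème chinois, je définis $\overline{a}\in\mathbb{Z}/N\mathbb{Z}$ satisfaisant $\overline{a}\equiv\overline{0}\pmod m$ et $\overline{a}\equiv\overline{k}\pmod n$, ainsi que $\overline{b}$ satisfaisant $\overline{b}\equiv\overline{k}\pmod m$ et $\overline{b}\equiv\overline{0}\pmod n$, de sorte que $\overline{a}+\overline{b}=\overline{k}$. Je cherche alors des entiers $l,p\geq 3$ avec $l+p=r+2$ tels que les uplets $(\overline{a},\overline{k},\ldots,\overline{k},\overline{a})$ de taille $l$ et $(\overline{b},\overline{k},\ldots,\overline{k},\overline{b})$ de taille $p$ soient solutions de \eqref{p} : si tel est le cas, alors par définition de $\oplus$ leur somme est exactement $(\overline{k},\ldots,\overline{k})$ de taille $r$ (tous les termes intérieurs valant $\overline{k}$ et $\overline{a}+\overline{b}=\overline{k}$), ce qui donne la réductibilité par la définition \ref{23}. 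Le choix naturel est $l\equiv 2\pmod m$ et $l\equiv 0\pmod 3$, résoluble par CRT puisque $\gcd(3,m)=1$ ; la vérification repose alors sur l'identité $M_{l}(\overline{0},\overline{k},\ldots,\overline{k},\overline{0})=S\,M_{l-2}(\overline{k},\ldots,\overline{k})\,S$, où $S=M_{1}(\overline{0})$, évaluée séparément modulo $m$ et modulo $n$.

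Le principal obstacle est la cohérence des signes $\pm Id$ modulo $m$ et modulo $n$. En effet, un calcul explicite donne $M_{l}(\overline{0},\overline{k},\ldots,\overline{k},\overline{0})\equiv -Id\pmod m$ (en utilisant $M_{l-2}(\overline{2},\ldots,\overline{2})\equiv Id\pmod m$, car $l-2\equiv 0\pmod m$), tandis que $M_{l}(\overline{k},\ldots,\overline{k})\equiv(-Id)^{l/3}\pmod n$. Pour $n>2$, il faut donc imposer en outre $l/3$ impair, soit $l=6t+3$ avec $6t\equiv -1\pmod m$ ; grâce à $\gcd(6,m)=1$, cette équation admet une unique solution $t\in[0,m-1]$, donnant $l\in[3,6m-3]$ : c'est précisément cette contrainte de parité qui impose le passage à la taille $6m$ plutôt que $3m$. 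Pour $n=2$, la contrainte disparaît puisque $-Id\equiv Id\pmod 2$, et l'entier $l$ obtenu par simple CRT convient. Dans les deux cas, les conditions symétriques pour $p=r+2-l$ (à savoir $p\equiv 0\pmod m$ et $p\equiv 2\pmod 3$, avec la bonne parité) sont automatiquement satisfaites, et les bornes $l,p\geq 3$ se contrôlent directement à partir des intervalles obtenus, ce qui termine la démonstration.
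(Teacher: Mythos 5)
Votre démonstration est correcte et suit essentiellement la même démarche que celle de l'article : même choix de $k$ (via $k\equiv 1~[n]$ et $k\equiv 2~[m]$), même calcul de la taille $6m$ ou $3m$ à l'aide du lemme \ref{30} et du théorème \ref{32}, et même type de solution réductrice $(\overline{x},\overline{k},\ldots,\overline{k},\overline{x})$ avec $x$ déterminé par le lemme chinois. La seule différence est de présentation : l'article distingue les cas $m\equiv 1~[3]$ et $m\equiv 2~[3]$ pour exhiber directement une solution réductrice de taille $m+2$ ou $m$, tandis que vous obtenez la taille $l$ de cette solution comme unique résidu d'un système de congruences, ce qui redonne en fait les mêmes solutions.
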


\begin{proof}

Les entiers $m$ et $n$ sont premiers entre eux, donc, par le théorème de Bézout, il existe $(a,b) \in \mathbb{Z}^{2}$ tels que $am+bn=1$. On pose $k=am+2bn$. En particulier, $k \equiv 1 [n]$ et $k \equiv 2 [m]$. L'entier $m$ est impair et non divisible par 3. Donc, $m \equiv 1, 2 [3]$, c'est-à-dire $m=1+3h$ ou $m=2+3h$ (avec $h \in \mathbb{N}^{*}$). De plus, $m$ est impair. Donc, si $m=1+3h$, $h$ est pair. Si $m=2+3h$, $h$ est impair.
\\
\\Supposons par l'absurde que $k$ et $N$ ne soient pas premiers entre eux. Il existe un nombre premier $p$ qui divise $k$ et $N=nm$. Par le lemme d'Euclide, $p$ divise $n$ ou $p$ divise $m$. Si $p$ divise $n$ alors puisque $k \equiv 1 [n]$, $p$ divise 1. Ceci est absurde. Donc, $p$ divise $m$. Puisque $k \equiv 2 [m]$, $p$ divise 2. Ainsi, $p=2$. Donc, $m$ est pair, ce qui est absurde. Ainsi, $k$ est premier avec $N$.
\\
\\On va maintenant calculer la taille de la solution $\overline{k}$-monomiale minimale de \eqref{p}.
\\
\\Par le théorème \ref{32}, la taille de la solution $(k+n\mathbb{Z})$-monomiale minimale de $(E_{n})$ est égale à 3 et la taille de la solution $(k+m\mathbb{Z})$-monomiale minimale de $(E_{m})$ est égale à $m$. De plus, on a 
\[M_{3}(k+n\mathbb{Z},k+n\mathbb{Z},k+n\mathbb{Z})=M_{3}(1+n\mathbb{Z},1+n\mathbb{Z},1+n\mathbb{Z})=(-1+n\mathbb{Z})Id;\]
\[M_{m}(k+m\mathbb{Z},\ldots,k+m\mathbb{Z})=M_{m}(2+m\mathbb{Z},\ldots,2+m\mathbb{Z})=(1+m\mathbb{Z})Id.\]

\noindent Comme $n$ et $m$ divisent $N$, la taille $l$ de la solution $\overline{k}$-monomiale minimale de \eqref{p} est un multiple de $3$ et de $m$ (lemme \ref{30}), c'est-à-dire un multiple de $3m$ (puisque $3$ et $m$ sont premiers entre eux). Supposons $n>2$. 
\[M_{3m}(k+n\mathbb{Z},\ldots,k+n\mathbb{Z})=(-1+n\mathbb{Z})^{m}Id=(-1+n\mathbb{Z})Id\neq (1+n\mathbb{Z})Id~(n>2).\] et 
\[M_{3m}(k+m\mathbb{Z},\ldots,k+m\mathbb{Z})=(1+m\mathbb{Z})^{3}Id=(1+m\mathbb{Z})Id \neq (-1+m\mathbb{Z})Id~(m>2);\]

\noindent Donc, $l \neq 3m$. En revanche, 
\[M_{6m}(k+n\mathbb{Z},\ldots,k+n\mathbb{Z})=(-1+n\mathbb{Z})^{2m}Id=(1+n\mathbb{Z})Id;\]
\[M_{6m}(k+m\mathbb{Z},\ldots,k+m\mathbb{Z})=(1+m\mathbb{Z})^{6}Id=(1+m\mathbb{Z})Id.\]

\noindent Par le lemme chinois ($m$ et $n$ premiers entre eux), on a $l=6m$. Si $n=2$ on montre de façon analogue que $l=3m$.
\\
\\On distingue maintenant deux cas :
\\
\\ \uwave{Premier cas :} On suppose $m=1+3h$ avec $h$ pair.
\\
\\Posons $x=am$. On a :
\begin{itemize}
\item $x \equiv 0 [m]$, donc, 
\begin{eqnarray*}
 M &=& M_{m+2}(x+m\mathbb{Z},k+m\mathbb{Z},\ldots,k+m\mathbb{Z},x+m\mathbb{Z}) \\
   &=& M_{m+2}(0+m\mathbb{Z},2+m\mathbb{Z},\ldots,2+m\mathbb{Z},0+m\mathbb{Z}) \\
	 &=& (-1+m\mathbb{Z})Id.
	\end{eqnarray*}

\item $x \equiv 1 [n]$, donc, 
\begin{eqnarray*}
\tilde{M} &=& M_{m+2}(x+n\mathbb{Z},k+n\mathbb{Z},\ldots,k+n\mathbb{Z},x+n\mathbb{Z}) \\
  &=& M_{m+2}(1+n\mathbb{Z},\ldots,1+n\mathbb{Z}) \\
	&=& (-1+n\mathbb{Z})^{h+1}Id \\
	&=& (-1+n\mathbb{Z})Id~~{\rm car}~h~{\rm est~pair}.
\end{eqnarray*}

\end{itemize}

\noindent Par le lemme chinois, $M_{m+2}(\overline{x},\overline{k},\ldots,\overline{k},\overline{x})=-Id$. Comme la solution $\overline{k}$-monomiale minimale de \eqref{p} est de taille $6m$ (ou $3m$), on peut utiliser le $(m+2)$-uplet $(\overline{x},\overline{k},\ldots,\overline{k},\overline{x})$ pour la réduire.
\\
\\ \uwave{Deuxième cas :} On suppose $m=2+3h$ avec $h$ impair.
\\
\\Posons $x=2bn$. On a :
\begin{itemize}
\item $x \equiv 2 [m]$, donc, 
\begin{eqnarray*}
M &=& M_{m}(x+m\mathbb{Z},k+m\mathbb{Z},\ldots,k+m\mathbb{Z},x+m\mathbb{Z}) \\
  &=& M_{m}(2+m\mathbb{Z},\ldots,2+m\mathbb{Z}) \\
	&=& (1+m\mathbb{Z})Id.
\end{eqnarray*}

\item $x \equiv 0 [n]$, donc, 
\begin{eqnarray*}
\tilde{M} &=& M_{m}(x+n\mathbb{Z},k+n\mathbb{Z},\ldots,k+n\mathbb{Z},x+n\mathbb{Z}) \\
  &=& M_{m}(0+n\mathbb{Z},1+n\mathbb{Z},\ldots,1+n\mathbb{Z},0+n\mathbb{Z}) \\
	&=& (-1+n\mathbb{Z})(-1+n\mathbb{Z})^{h}Id \\
	&=& (1+n\mathbb{Z})Id~~{\rm car}~h~{\rm est~impair}.
\end{eqnarray*}

\end{itemize}

\noindent Par le lemme chinois, $M_{m}(\overline{x},\overline{k},\ldots,\overline{k},\overline{x})=Id$. Comme la solution $\overline{k}$-monomiale minimale de \eqref{p} est de taille $6m$ (ou $3m$), on peut utiliser le $m$-uplet $(\overline{x},\overline{k},\ldots,\overline{k},\overline{x})$ pour la réduire.
\\
\\Ainsi, la solution $\overline{k}$-monomiale minimale de \eqref{p} est réductible de taille $6m$ (ou $3m$ si $n=2$).

\end{proof}

Cette proposition va nous être très utile pour obtenir les deux résultats de classification souhaités. De plus, cette preuve présente l'avantage d'être constructive.

\begin{examples}
{\rm On va détailler deux cas :
\begin{itemize}
\item $N=15=3 \times 5$, $m=5$ et $n=3$. Comme 5 divise $N$, on peut, bien sûr, utiliser les résultats du théorème \ref{35}, mais également ceux de la proposition précédente. On a $5\times (-1) +3 \times 2=1$. Posons $k=5 \times (-1)+2 \times 3 \times 2=7$. $m=2+3\times 1$ et on pose $x=12$. Par le résultat précédent, la solution $\overline{k}$-monomiale minimale de \eqref{p} est réductible. Elle est de taille $6m=30$ et on peut la réduire avec la solution $(\overline{-3},\overline{7},\overline{7},\overline{7},\overline{-3})$.
\\

\item Prenons maintenant le plus petit entier sur lequel on ne disposait pas encore d'information, c'est-à-dire $N=17~441=107 \times 163$, $n=107$ et $m=163$. On a $107 \times 32 +163 \times (-21)=1$. Posons $k=163 \times (-21)+2 \times 107 \times 32=-3423+6848=3425$. $163=1+3 \times 54$ (c'est-à-dire $h=54$) et on pose $x=163 \times (-21)=-3423$. Par le résultat précédent, la solution $\overline{k}$-monomiale minimale de \eqref{p} est réductible. Elle est de taille $6m=978$ et on peut la réduire avec la solution $(\overline{-3423},\overline{k},\ldots,\overline{k},\overline{-3423})$ de taille $l'=m+2=165$.

\end{itemize}
}
\end{examples}

\noindent On effectue maintenant la classification des entiers monomialement irréductibles.

\begin{proof}[Démonstration du théorème \ref{26}]

$\uwave{\Longleftarrow :}$ Si $N \in \mathbb{P} \cup \{4, 6, 8, 12, 24\}$ alors, par le théorème \ref{35} i), $N$ est monomialement irréductible.
\\
\\ $\uwave{\Longrightarrow :}$ Si $N$ est monomialement irréductible. 
\\
\\On suppose $N$ impair. Par la proposition \ref{36}, $N$ n'est pas de la forme $N=nm$ avec $n$ et $m$ des entiers naturels impairs premiers entre eux différents de 1. Donc, $N$ est la puissance d'un nombre premier. Par la proposition \ref{34}, $N$ n'a pas de facteur carré. Donc, $N$ est premier. 
\\
\\On suppose $N$ pair. Il existe $(a,m) \in  (\mathbb{N}^{*})^{2}$ et $b \in \mathbb{N}$ tels que $N=2^{a}3^{b}m$ et $m$ impair non divisible par 3. Si $m>1$ alors par la proposition \ref{36}, $N$ est monomialement réductible, ce qui est absurde. Donc, $m=1$. Par la proposition \ref{34}, $b \leq 1$ et $a \leq 3$, c'est-à-dire $N \in \{2, 4, 6, 8, 12, 24\}$.

\end{proof}

\begin{remark}
{\rm 
Pour obtenir une liste des entiers monomialement irréductibles, il suffit donc de consulter une liste de nombres premiers et d'y rajouter 4, 6, 8, 12 et 24. On peut par exemple regarder les tables 1 et 2 de
\cite{Ri} ou \cite{OEIS} A000040. Notons par ailleurs que les entiers monomialement irréductibles sont également
référencés dans \cite{OEIS} (A350242).
}
\end{remark}

Notons que dans cette preuve, on n'a pas utilisé les points ii) et iii) du théorème \ref{35}. Toutefois, les éléments qui nous avaient permis de démontrer ces deux points gardent un certain intérêt. En effet, pour obtenir ii), on avait, comme on l'a déjà indiqué, établi des formules pour le calcul des $K_{n}(\overline{k})$. Quant au point iii), on a démontré pour l'obtenir des résultats de réduction utilisant des solutions de très petite taille. Par exemple, si $N$ est divisible par un nombre premier congru à $\pm 1 [5]$ alors on avait établi que l'on peut trouver une solution monomiale minimale de \eqref{p} que l'on peut réduire avec une solution de taille 5. Ainsi, si $N=16~456~333=4003\times 4111$, on peut trouver un $k$ tel que la solution $\overline{k}$-monomiale minimale  de \eqref{p} peut être réduite avec une solution de taille 5 alors que la proposition \ref{36} fournit un $l$ tel que la solution $\overline{l}$-monomiale minimale peut être réduite avec une solution de taille supérieure à 4003.

\subsection{Démonstration du théorème \ref{27}}

On va maintenant s'intéresser aux entiers quasi monomialement irréductibles. Grâce aux résultats déjà établis, il nous reste en fait uniquement à considérer les cas des entiers de la forme $2^{n}3^{m}$.

\begin{proposition}
\label{37}

Les entiers de la forme $N=2^{n}3^{m} \geq 2$ sont quasi monomialement irréductibles.

\end{proposition}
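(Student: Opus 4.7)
The plan is to use the Chinese Remainder Theorem to import the irreducibility already known on each prime-power factor. Fix $k$ coprime to $N=2^{n}3^{m}$, and let $\ell_{2},\ell_{3},\ell_{N}$ denote the sizes of the minimal monomial solutions for the class of $k$ in $(E_{2^{n}})$, $(E_{3^{m}})$ and \eqref{p} respectively. Since $k$ is coprime to both $2$ and $3$, Theorem \ref{33} asserts that the two smaller solutions are irreducible, which is precisely what makes Lemma \ref{31} B) available in each prime-power factor.

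The key preliminary observation is that $3 \mid \gcd(\ell_{2},\ell_{3})$. Indeed, $k$ being odd, $M_{1}(\overline{k})$ reduces modulo $2$ to $M_{1}(\overline{1})$, whose third power is the identity in $SL_{2}(\mathbb{Z}/2\mathbb{Z})$ (a direct computation), so the order of $M_{1}(\overline{k})$ in $SL_{2}(\mathbb{Z}/2^{n}\mathbb{Z})$ is divisible by $3$; a short check depending on whether $M_{1}(\overline{k})^{\ell_{2}}=Id$ or $-Id$ then shows the same divisibility holds in the quotient $PSL_{2}(\mathbb{Z}/2^{n}\mathbb{Z})$, so $3 \mid \ell_{2}$. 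The analogous reduction modulo $3$ gives $3 \mid \ell_{3}$.

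Suppose now for contradiction that the $\overline{k}$-monomial minimal solution of \eqref{p} is reducible. Unpacking Definition \ref{23} together with the recalled fact that any solution of \eqref{p} of the shape $(\overline{a},\overline{k},\ldots,\overline{k},\overline{b})$ satisfies $\overline{a}=\overline{b}$, the decomposition produces two solutions of \eqref{p} of the form $(\overline{\alpha},\overline{k},\ldots,\overline{k},\overline{\alpha})$ of length $s \geq 3$ and $(\overline{\beta},\overline{k},\ldots,\overline{k},\overline{\beta})$ of length $t \geq 3$, with $s+t=\ell_{N}+2$ and $\overline{\alpha+\beta}=\overline{k}$. Reducing each piece modulo $2^{n}$ and modulo $3^{m}$ and applying Lemma \ref{31} B) in each factor, the endpoints $\alpha$ and $\beta$ are each congruent to $0$ or to $k$ modulo each prime power; since $k$ is coprime to $6$, the relation $\alpha+\beta \equiv k$ in each factor forces $\{\alpha,\beta\}$ to realise $\{0,k\}$ there. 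Up to swapping $(\alpha,s)$ with $(\beta,t)$, we may assume $\alpha \equiv k \pmod{2^{n}}$, and Lemma \ref{31} B) in $(E_{2^{n}})$ then gives $s \equiv 0 \pmod{\ell_{2}}$; two subcases remain according to $\alpha \bmod 3^{m}$.

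If $\alpha \equiv k \pmod{3^{m}}$, then by the CRT $\alpha=k$ in $\mathbb{Z}/N\mathbb{Z}$, so the first piece is just the $s$-tuple $(\overline{k},\ldots,\overline{k})$, forcing $s$ to be a positive multiple of $\ell_{N}$; but $s \leq \ell_{N}-1$ from $t \geq 3$, a contradiction. If instead $\alpha \equiv 0 \pmod{3^{m}}$, Lemma \ref{31} B) in $(E_{3^{m}})$ gives $s \equiv 2 \pmod{\ell_{3}}$; combined with $s \equiv 0 \pmod{\ell_{2}}$, this forces $0 \equiv 2 \pmod{\gcd(\ell_{2},\ell_{3})}$, contradicting $3 \mid \gcd(\ell_{2},\ell_{3})$. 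The main obstacle is the combinatorial case analysis across the two CRT projections; the divisibility $3 \mid \gcd(\ell_{2},\ell_{3})$ is exactly what makes both subcases collapse uniformly.
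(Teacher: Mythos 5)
Your argument is correct, but for the decisive step it takes a genuinely different route from the paper's. The common core is the same: reduce modulo $2^{n}$ and modulo $3^{m}$, invoke Theorem \ref{33} to get irreducibility in each prime-power factor, and apply Lemma \ref{31} to force each endpoint of the decomposition into $\{\overline{0},\overline{k}\}$ in each factor, the unmixed combinations being excluded immediately. To kill the two mixed cases, the paper introduces a \emph{third} divisor $h=2\times 3^{m}$: it first settles $N=2\times3^{m}$ separately (by showing the minimal sizes modulo $N$ and modulo $3^{m}$ coincide, so any reduction would descend to $(E_{3^{m}})$), then for $n\geq 2$ writes the CRT representative explicitly ($x=ak2^{n}$ or $x=bk3^{m}$ via Bézout) and derives a parity or divisibility contradiction from $x\equiv 0$ or $k~[h]$. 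You instead exploit the \emph{length} congruences of Lemma \ref{31} B): keeping both pieces of the decomposition, with lengths $s+t=\ell_{N}+2$ and endpoints summing to $\overline{k}$, the mixed case yields $s\equiv 0~[\ell_{2}]$ from one factor and $s\equiv 2~[\ell_{3}]$ from the other, incompatible with $3\mid\gcd(\ell_{2},\ell_{3})$ — a divisibility which indeed follows from Lemma \ref{30} since the minimal monomial solutions modulo $2$ and modulo $3$ both have size $3$ when $k$ is coprime to $6$. Your version buys a uniform treatment of $n=1$ and $n\geq2$ and dispenses with the auxiliary modulus; the paper's version does not need to track the second piece of the decomposition and transfers to settings (as in the proof of the Théorème \ref{54}) where no common factor $\geq 3$ of the two sizes is available. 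Two small points to tidy up: the degenerate cases $n=0$ or $m=0$, where one of $(E_{2^{n}})$, $(E_{3^{m}})$ does not exist, should be dispatched first by Theorem \ref{33}; and when applying Lemma \ref{31} B) you should note explicitly that $\overline{k}\neq\overline{0}$ in each factor (again because $k$ is coprime to $6$), so that the two branches of the lemma are mutually exclusive and the congruence on $s$ is genuinely forced.
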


\begin{proof}

On considère différents cas en fonction de $n$.
\\
\\ $\uwave{n=0 :}$ Par le théorème \ref{33}, la proposition est vraie.
\\
\\ $\uwave{n=1 :}$ Si $m=0$, le résultat est vrai (Théorème \ref{33}). On suppose donc $m \geq 1$. Soit $k$ un entier premier avec $N=2 \times 3^{m}$. 
\\
\\Notons $l$ la taille de la solution $(k+3^{m} \mathbb{Z})$-monomiale minimale de $(E_{3^{m}})$ et $r$ la taille de la solution $\overline{k}$-monomiale minimale de \eqref{p}. Comme $3^{m}$ divise $N$, $r$ est un multiple de $l$. La solution $(k+3 \mathbb{Z})$-monomiale minimale de $(E_{3})$ est de taille 3, car $k$ n'est pas un multiple de 3. Donc, comme 3 divise $3^{m}$, $l$ est un multiple de $3$. 
\\
\\Il existe $\epsilon \in \{1, -1\}$ tel que $M_{l}(k+3^{m} \mathbb{Z},\ldots,k+3^{m} \mathbb{Z})=(\epsilon +3^{m}\mathbb{Z}) Id$. On a :

\begin{eqnarray*}
M_{l}(k+2 \mathbb{Z},\ldots,k+2 \mathbb{Z}) &=& M_{l}(1+2 \mathbb{Z},\ldots,1+2 \mathbb{Z}) \\
                                            &=& (-1+2\mathbb{Z})^{\frac{l}{3}}Id \\
																						&=& (\epsilon+2\mathbb{Z}) Id~~({\rm car}~-1 \equiv 1 [2]).
\end{eqnarray*}																						

\noindent Par le lemme chinois, $r=l$.
\\
\\Supposons par l'absurde que la solution $\overline{k}$-monomiale minimale de \eqref{p} est réductible. Il existe une solution $(\overline{x},\overline{k},\ldots,\overline{k},\overline{x})$ de \eqref{p} de taille $3 \leq l' \leq l-1$. 
\\
\\Comme $3^{m}$ divise $N$, le $l'$-uplet $(x+3^{m} \mathbb{Z},k+3^{m} \mathbb{Z},\ldots,k+3^{m} \mathbb{Z},x+3^{m} \mathbb{Z})$ est une solution de $(E_{3^{m}})$. Donc, la solution $(k+3^{m} \mathbb{Z})$-monomiale minimale de $(E_{3^{m}})$ est réductible. Or, comme $k$ n'est pas divisible par 3, celle-ci est irréductible (Théorème \ref{33}). On arrive ainsi à une absurdité.
\\
\\Ainsi, la solution $\overline{k}$-monomiale minimale de \eqref{p} est irréductible.
\\
\\ $\uwave{n \geq 2 :}$ Si $m=0$, le résultat est vrai (Théorème \ref{33}). On suppose donc $m \geq 1$. Soit $k$ un entier premier avec $N=2^{n} \times 3^{m}$. Par le théorème de Bézout, il existe $(a,b) \in \mathbb{Z}^{2}$ tels que $a2^{n}+b3^{m}=1$. En particulier, $a$ n'est pas divisible par 3 et $b$ est impair.
\\
\\Notons $r$ la taille de la solution $\overline{k}$-monomiale minimale de \eqref{p}. Supposons par l'absurde que cette dernière est réductible. Il existe une solution $(\overline{x},\overline{k},\ldots,\overline{k},\overline{x})$ de \eqref{p} de taille $3 \leq d \leq r-1$ permettant de la réduire. On va considérer plusieurs diviseurs de $N$.

\begin{itemize}
\item $2^{n}$ divise $N$. Le $d$-uplet $(x+2^{n} \mathbb{Z},k+2^{n} \mathbb{Z},\ldots,k+2^{n} \mathbb{Z},x+2^{n} \mathbb{Z})$ est une solution de $(E_{2^{n}})$. Or, comme $k$ est impair, la solution $(k+2^{n} \mathbb{Z})$-monomiale minimale de $(E_{2^{n}})$ est irréductible, par le théorème \ref{33}. Par le lemme \ref{31}, $x \equiv 0, k [2^{n}]$.
\\
\item $3^{m}$ divise $N$. Le $d$-uplet $(x+3^{m} \mathbb{Z},k+3^{m} \mathbb{Z},\ldots,k+3^{m} \mathbb{Z},x+3^{m} \mathbb{Z})$ est une solution de $(E_{3^{m}})$. Or, comme $k$ n'est pas divisible par 3, la solution $(k+3^{m} \mathbb{Z})$-monomiale minimale de $(E_{3^{m}})$ est irréductible, par le théorème \ref{33}. Par le lemme \ref{31}, $x \equiv 0, k [3^{m}]$.
\\
\item $h=2 \times 3^{m}$ divise $N$. Le $d$-uplet $(x+h \mathbb{Z},k+h \mathbb{Z},\ldots,k+h \mathbb{Z},x+h \mathbb{Z})$ est une solution de $(E_{h})$. Or, comme $k$ est premier avec $h$, la solution $(k+h \mathbb{Z})$-monomiale minimale de $(E_{h})$ est irréductible (point précédent). Par le lemme \ref{31}, $x \equiv 0, k [h]$.
\\
\end{itemize}

\noindent Si $x \equiv 0 [2^{n}]$ et $x \equiv 0 [3^{m}]$ alors, par le lemme chinois, $\overline{x}=\overline{0}$. Dans ce cas, le $d-2$-uplet $(\overline{k},\ldots,\overline{k})$ est une solution de \eqref{p}, ce qui est absurde (puisque $d-2 < r$). Si $x \equiv k [2^{n}]$ et $x \equiv k [3^{m}]$ alors, par le lemme chinois, $\overline{x}=\overline{k}$. Dans ce cas, le $d$-uplet $(\overline{k},\ldots,\overline{k})$ est une solution de \eqref{p}, ce qui est absurde (puisque $d < r$). Donc, on a $x \equiv 0 [2^{n}]$ et $x \equiv k [3^{m}]$ ou $x \equiv k [2^{n}]$ et $x \equiv 0 [3^{m}]$.
\\
\\Si $x \equiv 0 [2^{n}]$ et $x \equiv k [3^{m}]$. Par le lemme chinois, on a $x \equiv ak2^{n} [N]$. Si $x \equiv 0 [h]$ alors 3 divise $ak2^{n}$, ce qui est absurde. Donc, $x \equiv k [h]$ et 2 divise $k(a2^{n}-1)$. Or, ceci est absurde, car $k$ et $(a2^{n}-1)$ sont impairs.
\\
\\Si $x \equiv k [2^{n}]$ et $x \equiv 0 [3^{m}]$. Par le lemme chinois, on a $x \equiv bk3^{m} [N]$. Si $x \equiv 0 [h]$ alors 2 divise $bk3^{m}$, ce qui est absurde. Donc, $x \equiv k [h]$ et 3 divise $k(b3^{m}-1)$. Or, ceci est absurde, car $k$ et $(b3^{m}-1)$ ne sont pas divisibles par 3.
\\
\\Ainsi, la solution $\overline{k}$-monomiale minimale de \eqref{p} est irréductible.

\end{proof}

\noindent Avec ce résultat, on peut effectuer la classification des entiers quasi monomialement irréductibles.

\begin{proof}[Démonstration du théorème \ref{27}]

$\uwave{\Longleftarrow :}$ Si $N \in \{p^{n}, p \in \mathbb{P}, n \in \mathbb{N}^{*}\}$ alors, par le théorème \ref{33}, $N$ est quasi monomialement irréductible. Si $N \in \{2^{n}3^{m}, (n,m) \in (\mathbb{N}^{*})^{2}\}$ alors, par la proposition précédente, $N$ est quasi monomialement irréductible.
\\
\\ $\uwave{\Longrightarrow :}$ Si $N$ est quasi monomialement irréductible.
\\
\\Si $N$ est impair. Par la proposition \ref{36}, $N$ n'est pas de la forme $N=nm$ avec $n$ et $m$ des entiers naturels impairs premiers entre eux différents de 1. Donc, $N$ est la puissance d'un nombre premier impair.
\\
\\Si $N$ est pair. Il existe $(a,m) \in  (\mathbb{N}^{*})^{2}$ et $b \in \mathbb{N}$ tels que $N=2^{a}3^{b}m$ et $m$ impair non divisible par 3. Par la proposition \ref{36}, on a nécessairement $m=1$, c'est-à-dire $N \in \{2^{n}3^{m}, n \in \mathbb{N}^{*}, m \in \mathbb{N}\}$.

\end{proof}

Dans l'annexe \ref{A}, on donne la liste, dans l'ordre croissant, de tous les entiers quasi monomialement irréductibles inférieurs à 1000.
\\
\\ \indent Après la définition \ref{25}, on avait indiqué qu'il était possible, pour $N$ quasi monomialement   irréductible, d'avoir un $k$ non premier avec $N$ tel que la solution $\overline{k}$-monomiale minimale de \eqref{p} est irréductible. À la lueur du théorème de classification, on peut en fait savoir exactement les $N$ pour lesquels une telle chose est possible. 

\begin{proposition}
\label{38}

Soit $N \geq 2$. Notons $\Omega_{N}$ l'ensemble des entiers $k$ appartenant à $[\![0;N-1]\!]$ pour lesquels la solution $\overline{k}$-monomiale minimale de \eqref{p} est irréductible. L'ensemble $\Omega_{N}$ est égal à l'ensemble $\{k \in [\![0;N-1]\!], k~{\rm premier~avec}~N\}$ si et seulement si $N=2$ ou $N=p^{n}$ avec $p$ premier impair et $n \geq 1$.

\end{proposition}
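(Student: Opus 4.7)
Mon approche consiste à ramener l'énoncé aux théorèmes de classification déjà établis, en particulier le Théorème \ref{27}, le Théorème \ref{32} (irréductibilité systématique de la solution $\overline{2}$-monomiale minimale pour $N \geq 3$), le Théorème \ref{33} (classification dans le cas des puissances de nombres premiers) ainsi que la Proposition \ref{36} (construction d'une solution réductible en présence de deux facteurs impairs coprimes).

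Pour le sens $\Longleftarrow$, je traiterai séparément les deux cas de l'énoncé. Si $N=2$, une vérification directe suffit : les seuls éléments de $\mathbb{Z}/2\mathbb{Z}$ sont $\overline{0}$ et $\overline{1}$, la solution $\overline{0}$-monomiale minimale $(\overline{0},\overline{0})$ n'est pas comptée comme irréductible (cf.\ la remarque suivant la Définition \ref{23}) et la solution $\overline{1}$-monomiale minimale est irréductible de taille $3$, donc $\Omega_{2}=\{1\}$, qui coïncide avec l'ensemble des entiers de $[\![0;1]\!]$ premiers avec $2$. Si $N=p^{n}$ avec $p$ premier impair et $n \geq 1$, le Théorème \ref{33} i) donne immédiatement que la solution $\overline{k}$-monomiale minimale est irréductible si et seulement si $p \nmid k$, condition équivalente à $k$ premier avec $N$.

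Pour le sens $\Longrightarrow$, je procéderai par contraposée en supposant $N \notin \{2\} \cup \{p^{n},~p~{\rm premier~impair},~n \geq 1\}$ et en exhibant un témoin de $\Omega_{N} \neq \{k \in [\![0;N-1]\!],~k~{\rm premier~avec}~N\}$. Si $N$ est pair avec $N \geq 4$, alors $2$ n'est pas premier avec $N$, mais le Théorème \ref{32} assure que la solution $\overline{2}$-monomiale minimale est irréductible, d'où $2 \in \Omega_{N}$ bien que $2$ ne soit pas premier avec $N$. Si $N$ est impair et n'est pas la puissance d'un nombre premier, alors $N$ admet au moins deux facteurs premiers impairs distincts, et on peut écrire $N=nm$ avec $n,m$ impairs, coprimes, différents de $1$ et, quitte à échanger leurs rôles, avec $m$ non divisible par $3$ (par coprimalité, au plus l'un des deux facteurs peut l'être). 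La Proposition \ref{36} fournit alors un entier $k$ premier avec $N$ dont la solution $\overline{k}$-monomiale minimale est réductible, donc un $k$ premier avec $N$ qui n'appartient pas à $\Omega_{N}$.

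Le principal point d'attention sera le traitement correct des cas dans le sens $\Longrightarrow$, en particulier le choix de la décomposition $N=nm$ permettant d'appliquer la Proposition \ref{36}, mais cette vérification est élémentaire. L'ensemble de la démonstration repose exclusivement sur l'application directe des résultats antérieurs du texte, sans nécessiter de nouveau calcul matriciel ; la principale observation est que le Théorème \ref{32} règle uniformément tout le cas pair $N \geq 4$ en fournissant un élément non premier à $N$ dans $\Omega_{N}$.
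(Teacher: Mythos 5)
Votre démonstration est correcte, mais elle emprunte, pour le sens $\Longrightarrow$ (c'est-à-dire $\Omega_{N}=\{k,~k~{\rm premier~avec}~N\}$ implique $N=2$ ou $N=p^{n}$), un chemin réellement différent de celui du texte. Le texte remarque d'abord que l'hypothèse entraîne que $N$ est quasi monomialement irréductible, applique le théorème \ref{27} pour obtenir $N \in \{p^{n}\} \cup \{2^{n}3^{m}\}$, puis élimine les cas $N=2^{n}$ avec $n \geq 2$ et $N=2^{n}3^{m}$ avec $nm \geq 1$ en utilisant comme témoin $\frac{N}{2}$, non premier avec $N$ mais tel que la solution $\overline{\frac{N}{2}}$-monomiale minimale est irréductible (fait rappelé dans la section \ref{pre}). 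Vous procédez par contraposée et scindez selon la parité : pour $N$ pair $\geq 4$, votre témoin est $k=2$, non premier avec $N$ mais appartenant à $\Omega_{N}$ par le théorème \ref{32} ; pour $N$ impair non puissance d'un nombre premier, vous appliquez directement la proposition \ref{36} pour exhiber un $k$ premier avec $N$ hors de $\Omega_{N}$. Votre traitement du cas pair est plus économique, puisqu'il couvre uniformément tous les entiers pairs $\geq 4$ sans passer par la classification complète du théorème \ref{27} ; votre cas impair repose sur la proposition \ref{36}, qui est précisément l'ingrédient essentiel de la preuve du théorème \ref{27} dans le cas impair, de sorte que les deux approches mobilisent au fond les mêmes outils, la vôtre les utilisant de façon plus directe et la preuve du texte étant plus courte à rédiger une fois le théorème \ref{27} acquis. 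Le sens $\Longleftarrow$ est identique dans les deux rédactions (théorème \ref{33}, avec chez vous une vérification à la main du cas $N=2$ qui est tout aussi valable).
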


\begin{proof}

Si $N=2$ ou $N=p^{n}$ avec $p$ premier impair alors, par le théorème \ref{33}, on a \[\Omega_{N}=\{k \in [\![0;N-1]\!], k~{\rm premier~avec}~N\}.\]

\noindent Si $\Omega_{N}=\{k \in [\![0;N-1]\!], k~{\rm premier~avec}~N\}$. Dans ce cas, $N$ est quasi monomialement irréductible. Par le théorème \ref{27}, $N \in \{p^{n}, p \in \mathbb{P}, n \in \mathbb{N}^{*}\} \cup \{2^{n}3^{m}, (n,m) \in (\mathbb{N}^{*})^{2}\}$. Si $N=2^{n}$ avec $n \geq 2$ ou si $N=2^{n}3^{m}$ avec $nm \geq 1$ alors $\frac{N}{2}$ n'est pas premier avec $N$ et pourtant la solution $\overline{\frac{N}{2}}$-monomiale minimale de \eqref{p} est irréductible (voir section \ref{pre}). Donc, nécessairement, $N=2$ ou $N=p^{n}$ avec $p$ premier impair et $n \geq 1$.

\end{proof}

Dans le cas où $N$ n'est pas une puissance d'un nombre premier, il y a donc plus que $\varphi(N)$ solutions monomiales minimales irréductibles. On fournit dans l'annexe \ref{B} le nombre de ces dernières lorsque $N$ est de la forme $2^{n}3^{m}$ avec $nm \geq 1$.

\section{Classification des solutions monomiales minimales de $(E_{2\times 3^{m}})$}
\label{class}

Avant de faire la preuve du théorème de classification, on rappelle le résultat ci-dessous qui précise l'énoncé du théorème \ref{33}.

\begin{lemma}[\cite{M4}, proposition 3.12]
\label{41}

Soit $p \in \mathbb{P}$ impair, $n \geq 2$, $1 \leq k \leq n-1$ et $a$ un entier premier avec $p$. La solution $\overline{ap^{k}}$-monomiale minimale de $(E_{p^{n}})$ est de taille $2p^{n-k}$ avec $M_{2p^{n-k}}(\overline{ap^{k}},\ldots,\overline{ap^{k}})=-Id$. On peut réduire cette dernière avec la solution $M_{4\times p^{n-k-1}}(\overline{ap^{k}-2ap^{n-1}},\overline{ap^{k}},\ldots,\overline{ap^{k}},\overline{ap^{k}-2ap^{n-1}})=Id$.

\end{lemma}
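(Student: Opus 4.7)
On pose $A = M_1(\overline{ap^k})$, de sorte que $M_r(\overline{ap^k},\ldots,\overline{ap^k}) = A^r$ ; par Cayley-Hamilton appliqué au polynôme caractéristique $X^2 - ap^k X + 1$, $A$ vérifie $A^2 = ap^k A - Id$. On en déduit
\[A^{2p^{n-k}} = (ap^k A - Id)^{p^{n-k}} = \sum_{i=0}^{p^{n-k}}\binom{p^{n-k}}{i}(ap^k)^i\,(-1)^{p^{n-k}-i}A^i.\]
Comme $p$ est impair, le terme d'indice $i=0$ donne $-Id$ ; pour $i\geq 1$, le théorème de Kummer fournit $v_p\!\left(\binom{p^{n-k}}{i}\right) = n-k-v_p(i)$, et la valuation $p$-adique du terme général, $n-k-v_p(i)+ki$, est toujours $\geq n$ (cas $i=1$ donnant exactement $n$, puis $i \geq 2$ via $v_p(i) \leq i-1 \leq k(i-1)$). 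On conclut $A^{2p^{n-k}} = -Id$ dans $SL_2(\mathbb{Z}/p^n\mathbb{Z})$.

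Pour la minimalité, après réduction modulo $p$, $A$ devient $S = \begin{pmatrix}0 & -1\\ 1 & 0\end{pmatrix}$, d'ordre 2 dans $PSL_2(\mathbb{F}_p)$, si bien que la taille $r$ de la solution $\overline{ap^k}$-monomiale minimale est paire, $r = 2s$, avec $s$ divisant $p^{n-k}$ (puisque $A^{2p^{n-k}} = -Id$) : ainsi $s = p^j$ avec $0 \leq j \leq n-k$. Le même calcul binomial appliqué à l'exposant $p^j$ donne $A^{2p^j} \equiv -Id + ap^{j+k}A \pmod{p^n}$, et pour $j < n-k$ cette matrice diffère de $\pm Id$ (car $a$ est premier avec $p$ et $j+k < n$) ; d'où $j = n-k$, soit $r = 2p^{n-k}$.

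Pour la solution réductrice, on pose $x = ap^k - 2ap^{n-1}$. Le calcul de l'étape 1 appliqué à l'exposant $p^{n-k-1}$ fournit $A^{2p^{n-k-1}} \equiv -Id + ap^{n-1}A \pmod{p^n}$, puis $A^{4p^{n-k-1}} \equiv Id - 2ap^{n-1}A \pmod{p^n}$ par élévation au carré (l'hypothèse $n\geq 2$ absorbant le terme en $p^{2n-2}$). Comme $A^{-1} = -A + ap^k\, Id$, il vient
\[A^{4p^{n-k-1}-2} \equiv A^{-2} - 2ap^{n-1}A^{-1} \pmod{p^n}.\]
En écrivant $M_1(\overline{x}) = A - 2ap^{n-1}E_{11}$ (où $E_{11}$ désigne la matrice unité du coin supérieur gauche) et en développant $M_1(\overline{x})\,A^{4p^{n-k-1}-2}\,M_1(\overline{x})$ modulo $p^n$, on aboutit à $Id - 2ap^{n-1}\bigl(A + A^{-1}E_{11} + E_{11}A^{-1}\bigr)$ ; un calcul matriciel direct modulo $p$ montre alors que la parenthèse s'annule, d'où le résultat. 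L'obstacle principal tient précisément à cette vérification finale : elle dicte la forme $-2ap^{n-1}$ du correctif dans $x$ et demande de tenir la comptabilité exacte des termes d'ordre $p^{n-1}$ dans un produit de trois facteurs, ce que l'on pourrait rendre plus conceptuel en invoquant la structure abélienne du noyau $\ker\!\bigl(SL_2(\mathbb{Z}/p^n\mathbb{Z}) \to SL_2(\mathbb{Z}/p^{n-1}\mathbb{Z})\bigr)$.
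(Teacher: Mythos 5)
Le lemme \ref{41} n'est pas démontré dans cet article : il y est importé de \cite{M4} (proposition 3.12) et utilisé comme boîte noire, il n'y a donc pas de preuve interne à laquelle comparer la vôtre. Sur le fond, votre vérification autonome est correcte et suit une route naturelle : l'identité $A^{2}=ap^{k}A-Id$ et l'estimation $v_{p}\left(\binom{p^{n-k}}{i}(ap^{k})^{i}\right)=n+k(i-1)-v_{p}(i)\geq n$ pour $i\geq 1$ donnent bien $A^{2p^{n-k}}=-Id$ ; l'argument d'ordre dans $PSL_{2}$ joint au lemme \ref{30} restreint bien la taille à $2p^{j}$ avec $0\leq j\leq n-k$ ; et j'ai vérifié votre calcul final, $A+A^{-1}E_{11}+E_{11}A^{-1}\equiv 0~[p]$, qui établit $M_{1}(\overline{x})A^{4p^{n-k-1}-2}M_{1}(\overline{x})=Id$ pour $x=ap^{k}-2ap^{n-1}$, le terme croisé en $p^{2n-2}$ disparaissant grâce à $n\geq 2$.

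Deux retouches s'imposent néanmoins. D'abord, la congruence $A^{2p^{j}}\equiv -Id+ap^{j+k}A~[p^{n}]$ que vous invoquez pour la minimalité est fausse pour $j$ quelconque : déjà pour $j=1$, le terme $i=2$ a pour valuation $1+2k$, qui peut être strictement inférieure à $n$. Elle n'est garantie modulo $p^{n}$ que dans le cas $j=n-k-1$ exploité à la dernière étape. Pour exclure $r=2p^{j}$ avec $j<n-k$, il suffit en fait de travailler modulo $p^{j+k+1}$ : les termes $i\geq 2$ ont pour valuation $j+k+\left(k(i-1)-v_{p}(i)\right)\geq j+k+1$, donc $A^{2p^{j}}\equiv -Id+ap^{j+k}A~[p^{j+k+1}]$, dont le coefficient $(2,1)$ vaut $ap^{j+k}\neq 0$ modulo $p^{j+k+1}$ dès que $j+k+1\leq n$ ; la conclusion subsiste donc telle quelle. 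Ensuite, pour passer de l'identité $M_{4\times p^{n-k-1}}(\overline{x},\overline{ap^{k}},\ldots,\overline{ap^{k}},\overline{x})=Id$ à la réductibilité proprement dite, il faudrait signaler que $3\leq 4p^{n-k-1}\leq 2p^{n-k}-1$ (vrai puisque $p\geq 3$), condition nécessaire pour que les deux morceaux de la décomposition par $\oplus$ soient de taille au moins 3. Ce sont des réparations locales ; l'architecture de votre démonstration est saine.
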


\noindent On peut maintenant démontrer le résultat annoncé :

\begin{proof}[Démonstration du théorème \ref{28}]

Si $k$ est premier avec $N$ alors la solution $\overline{k}$-monomiale minimale de \eqref{p} est irréductible (voir théorème \ref{27}). On s'intéresse donc maintenant aux cas où $k$ n'est pas premier avec $N$.
\\
\\ \underline{On suppose $k=2a$ avec $a$ non divisible par 3 :} 
\\
\\Notons $l$ la taille de la solution $(k+3^{m} \mathbb{Z})$-monomiale minimale de $(E_{3^{m}})$ et $r$ la taille de la solution $\overline{k}$-monomiale minimale de \eqref{p}. Comme $3^{m}$ divise $N$, $r$ est un multiple de $l$. Il existe $\epsilon \in \{1, -1\}$ tel que $M_{l}(k+3^{m} \mathbb{Z},\ldots,k+3^{m} \mathbb{Z})=(\epsilon +3^{m}\mathbb{Z}) Id$. De plus, la solution $(k+2 \mathbb{Z})$-monomiale minimale de $(E_{2})$ est de taille 2. Ainsi, $r$ est un multiple de ${\rm ppcm}(2,l)$.
\\
\\\uwave{On suppose $l$ pair.} ${\rm ppcm}(2,l)=l$ et on a :
\[M_{l}(k+3^{m} \mathbb{Z},\ldots,k+3^{m} \mathbb{Z})=(\epsilon +3^{m}\mathbb{Z}) Id;\]
\[M_{l}(k+2 \mathbb{Z},\ldots,k+2 \mathbb{Z})=M_{l}(0+2 \mathbb{Z},\ldots,0+2 \mathbb{Z})=(\epsilon+2\mathbb{Z}) Id.\]

\noindent Par le lemme chinois, $r=l$. Supposons par l'absurde que la solution $\overline{k}$-monomiale minimale de \eqref{p} est réductible. Il existe une solution $(\overline{x},\overline{k},\ldots,\overline{k},\overline{x})$ de \eqref{p} de taille $3 \leq l' \leq l-1$. Comme $3^{m}$ divise $N$, le $l'$-uplet $(x+3^{m} \mathbb{Z},k+3^{m} \mathbb{Z},\ldots,k+3^{m} \mathbb{Z},x+3^{m} \mathbb{Z})$ est une solution de $(E_{3^{m}})$. Donc, la solution $(k+3^{m} \mathbb{Z})$-monomiale minimale de $(E_{3^{m}})$ est réductible. Or, comme $k$ n'est pas divisible par 3, celle-ci est irréductible (Théorème \ref{33}). On arrive ainsi à une absurdité.
\\
\\ \uwave{On suppose $l$ impair.} ${\rm ppcm}(2,l)=2l$ et on a :
\[M_{2l}(k+3^{m} \mathbb{Z},\ldots,k+3^{m} \mathbb{Z})=(\epsilon +3^{m}\mathbb{Z})^{2} Id=(1+3^{m}\mathbb{Z}) Id;\]
\[M_{2l}(k+2 \mathbb{Z},\ldots,k+2 \mathbb{Z})=M_{2l}(0+2 \mathbb{Z},\ldots,0+2 \mathbb{Z})=(1+2\mathbb{Z}) Id.\]

\noindent Par le lemme chinois, $r=2l$. Supposons par l'absurde que la solution $\overline{k}$-monomiale minimale de \eqref{p} est réductible. Il existe une solution $(\overline{x},\overline{k},\ldots,\overline{k},\overline{x})$ de \eqref{p} de taille $3 \leq l' \leq l+1$. 
\\
\\Si $l' \leq l-1$ alors, en procédant comme dans le cas précédent, on a arrive à une absurdité. Ainsi, $l' \in \{l, l+1\}$. Si $l'=l+1$. Comme $3^{m}$ divise $N$, le $(l+1)$-uplet $(x+3^{m} \mathbb{Z},k+3^{m} \mathbb{Z},\ldots,k+3^{m} \mathbb{Z},x+3^{m} \mathbb{Z})$ est une solution de $(E_{3^{m}})$ ce qui est impossible puisque la solution $(k+3^{m} \mathbb{Z})$-monomiale minimale de $(E_{3^{m}})$ est de taille $l$ (voir lemme \ref{31}). Donc, $l'=l$. Comme $2$ divise $N$, le $l$-uplet $(x+2 \mathbb{Z},k+2 \mathbb{Z},\ldots,k+2 \mathbb{Z},x+2 \mathbb{Z})$ est une solution de $(E_{2})$ ce qui est impossible puisque la solution $(k+2 \mathbb{Z})$-monomiale minimale de $(E_{2})$ est de taille $2$ et $l$ est impair (voir lemme \ref{31}).
\\
\\Ainsi, la solution $\overline{k}$-monomiale minimale de \eqref{p} est irréductible.
\\
\\ \underline{On suppose $k=3^{m}$ :}
\\
\\Dans ce cas, $k=\frac{N}{2}$ et la solution $\overline{k}$-monomiale minimale de \eqref{p} est irréductible (voir section \ref{pre}).
\\
\\ \underline{On suppose $k=3^{m-1}$ ou $k=5 \times 3^{m-1}$ :}
\\
\\ $k=3^{m-1}=\frac{N}{6}$. En particulier, $k$ est impair, $k \equiv 3[6]$ et $\overline{6k}=\overline{0}$. On a :
\[M_{6}(\overline{k},\ldots,\overline{k}) = \begin{pmatrix}
   \overline{k^{6}-5k^{4}+6k^{2}-1}   & \overline{-k^{5}+4k^{3}-3k} \\
   \overline{k^{5}-4k^{3}+3k} & \overline{-k^{4}+3k^{2}-1}
\end{pmatrix}.\]

\noindent Or, $\overline{-k^{4}+3k^{2}}=\overline{k^{2}(-k^{2}+3)}$ et $k^{2} \equiv 3 [6]$. Ainsi, $-k^{2}+3$ est un multiple de 6 et $\overline{-k^{4}+3k^{2}}=\overline{0}$. De même, $\overline{k^{5}-4k^{3}+3k}=\overline{k^{5}+2k^{3}+3k}=\overline{k(k^{4}+2k^{2}+3)}$. $k^{4} \equiv 3 [6]$ et donc $k^{4}+2k^{2}+3 \equiv 0 [6]$, c'est-à-dire $\overline{k^{5}-4k^{3}+3k}=\overline{0}$.
\\
\\Comme $M_{6}(\overline{k},\ldots,\overline{k})$ est de déterminant $\overline{1}$, on a $M_{6}(\overline{k},\ldots,\overline{k})=-Id$. Ainsi, la taille $r$ de la solution $\overline{k}$-monomiale minimale divise 6, c'est-à-dire que celle-ci est égale à 1, 2, 3 ou 6. Or, \eqref{p} n'a pas de solution de taille 1 et $\overline{k} \neq \overline{0}, \overline{\pm 1}$. Donc, $r=6$. 
\\
\\Si la solution $\overline{k}$-monomiale minimale est réductible alors elle est la somme de deux solutions de taille 4 (puisqu'elle ne contient pas $\pm \overline{1}$). Dans ce cas, \eqref{p} a une solution de la forme $(\overline{a},\overline{k},\overline{k},\overline{a})$ avec $\overline{a} \neq \overline{k}$ (sinon la solution minimale serait de taille 4). Donc, on a $\overline{k^{2}}=\overline{0}$ et $\overline{a}=\overline{-k}$. Ainsi, Il existe un entier $l$ tel que $k^{2}=6lk$. Donc, $k^{2}$ est pair ce qui implique $k$ pair, ce qui est absurde. 
\\
\\Donc, la solution $\overline{k}$-monomiale minimale est irréductible. De plus, $\overline{-3^{m-1}}=\overline{5 \times 3^{m-1}}$, donc la solution $\overline{5 \times 3^{m-1}}$-monomiale minimale de \eqref{p} est irréductible.
\\
\\ \underline{On suppose $k=2 \times 3^{m-1}$ ou $k=4 \times 3^{m-1}$ :}
\\
\\$\frac{N}{3}=2 \times 3^{m-1}$ et 9 divise $N$ donc, par la proposition \ref{34}, la solution $\overline{2 \times 3^{m-1}}$-monomiale minimale de \eqref{p} est réductible. De plus, $\overline{-2 \times 3^{m-1}}=\overline{4 \times 3^{m-1}}$, donc la solution $\overline{4 \times 3^{m-1}}$-monomiale minimale de \eqref{p} est réductible.
\\
\\ \underline{On suppose $m \geq 3$ et $k=3^{l}a$ avec $1 \leq l \leq m-2$ et $a$ impair non divisible par 3 :} 
\\
\\La solution $(k+3^{m} \mathbb{Z})$-monomiale minimale de $(E_{3^{m}})$ est de taille $2\times 3^{m-l}$. La solution $(k+2 \mathbb{Z})$-monomiale minimale de $(E_{2})$ est de taille $3$. En procédant comme précédemment, la taille $r$ de la solution $\overline{k}$-monomiale minimale de \eqref{p} est un multiple de ${\rm ppcm}(3,2\times 3^{m-l})=2\times 3^{m-l}$. Comme $M_{2\times 3^{m-l}}(k+3^{m} \mathbb{Z},\ldots,k+3^{m} \mathbb{Z})=(-1 +3^{m}\mathbb{Z}) Id$ et $M_{2\times 3^{m-l}}(k+2 \mathbb{Z},\ldots,k+2 \mathbb{Z})=(-1+2 \mathbb{Z})Id$ (car $k$ impair et $m-l \geq 1$). Donc, $r=2\times 3^{m-l}$.
\\
\\ On a, par le lemme \ref{41} :
\[M_{4\times 3^{m-l-1}}(a3^{l}-2a3^{m-1}+3^{m}\mathbb{Z},k+3^{m}\mathbb{Z},\ldots,k+3^{m}\mathbb{Z},a3^{l}-2a3^{m-1}+3^{m}\mathbb{Z})=(1+3^{m}\mathbb{Z});\]

\noindent De plus, on a :
\begin{eqnarray*}
M &=& M_{4\times 3^{m-l-1}}(a3^{l}-2a3^{m-1}+2\mathbb{Z},k+2\mathbb{Z},\ldots,k+2\mathbb{Z},a3^{l}-2a3^{m-1}+2\mathbb{Z}) \\
  &=& M_{4\times 3^{m-l-1}}(1+2\mathbb{Z},\ldots,1+2\mathbb{Z}) \\
	&=& (1+2\mathbb{Z})^{4 \times 3^{m-l-2}}Id~~({\rm car}~l \leq m-2) \\
	&=& (1+2\mathbb{Z})Id.
\end{eqnarray*}

\noindent Par le lemme chinois, $M_{4\times 3^{m-l-1}}(\overline{a3^{l}-2a3^{m-1}},\overline{k},\ldots,\overline{k},\overline{a3^{l}-2a3^{m-1}})=Id$. 
\\
\\Comme $4\times 3^{m-l-1}< 2\times 3^{m-l}$, on peut réduire la solution $\overline{k}$-monomiale minimale de \eqref{p}.
\\
\\ \underline{On suppose $m \geq 3$ et $k=3^{l}a$ avec $1 \leq l \leq m-2$ et $a$ pair non divisible par 3 :} 
\\
\\La solution $(k+3^{m} \mathbb{Z})$-monomiale minimale de $(E_{3^{m}})$ est de taille $2\times 3^{m-l}$. La solution $(k+2 \mathbb{Z})$-monomiale minimale de $(E_{2})$ est de taille $2$. En procédant comme précédemment, on obtient que la taille $r$ de la solution $\overline{k}$-monomiale minimale est égale à $2\times 3^{m-l}$.
\\
\\ On a, par le lemme \ref{41} :
\[M_{4\times 3^{m-l-1}}(a3^{l}-2a3^{m-1}+3^{m}\mathbb{Z},k+3^{m}\mathbb{Z},\ldots,k+3^{m}\mathbb{Z},a3^{l}-2a3^{m-1}+3^{m}\mathbb{Z})=(1+3^{m}\mathbb{Z});\]

\noindent De plus, on a :
\begin{eqnarray*}
M &=& M_{4\times 3^{m-l-1}}(a3^{l}-2a3^{m-1}+2\mathbb{Z},k+2\mathbb{Z},\ldots,k+2\mathbb{Z},a3^{l}-2a3^{m-1}+2\mathbb{Z}) \\
  &=& M_{4\times 3^{m-l-1}}(0+2\mathbb{Z},\ldots,0+2\mathbb{Z}) \\
	&=& (1+2\mathbb{Z})^{2\times 3^{m-l-1}}Id \\
	&=& (1+2\mathbb{Z})Id.
\end{eqnarray*}

\noindent Par le lemme chinois, $M_{4\times 3^{m-l-1}}(\overline{a3^{l}-2a3^{m-1}},\overline{k},\ldots,\overline{k},\overline{a3^{l}-2a3^{m-1}})=Id$. 
\\
\\Comme on a $4\times 3^{m-l-1}< 2\times 3^{m-l}$, on peut réduire la solution $\overline{k}$-monomiale minimale de \eqref{p}.
\\
\\ \underline{Conclusion :} Ainsi, la solution $\overline{k}$-monomiale minimale de \eqref{p} est réductible si et seulement si $k=3^{l}a$ avec $1 \leq l \leq m-2$ ou $k=3^{m-1}a$ avec $a$ pair. Donc, la solution $\overline{k}$-monomiale minimale de \eqref{p} est réductible si et seulement si $k$ est un multiple de 3 différent de $3^{m}$, de $3^{m-1}$ et de $5 \times 3^{m-1}$. En particulier, \eqref{p} a $2 \times 3^{m-1}-3$ solutions monomiales minimales réductibles. Donc, \eqref{p} a \[2\times 3^{m}-(2 \times 3^{m-1}-3)=3^{m-1}(6-2)+3=4 \times 3^{m-1}+3\]
\noindent solutions monomiales minimales irréductibles.

\end{proof}

\begin{examples}
{\rm 
On donne ci-dessous deux applications du théorème \ref{28} :
\begin{itemize}
\item $N=18=2 \times 9$. La solution $\overline{k}$-monomiale minimale de $(E_{18})$ est réductible si et seulement si $\overline{k} \in \{\overline{0}, \overline{6}, \overline{12}\}$.
\item $N=54=2 \times 27$. La solution $\overline{k}$-monomiale minimale de $(E_{54})$ est réductible si et seulement si $\overline{k} \in \{\overline{0}, \overline{3}, \overline{6},\overline{12}, \overline{15}, \overline{18}, \overline{21}, \overline{24}, \overline{30}, \overline{33}, \overline{36}, \overline{39}, \overline{42}, \overline{48}, \overline{51}\}$.
\end{itemize}
}
\end{examples}

En reprenant les méthodes développées ci-dessus, on pourrait établir des résultats similaires pour différents multiples de 2. Toutefois, les démonstrations contiendraient des disjonctions de cas de plus en plus nombreuses et de plus en plus longues. Aussi, étant donné qu'il ne serait pas très intéressant de disposer d'un petit nombre de résultats de classification séparés les uns des autres, on se contente du théorème établi ci-dessus et de la formulation du problème général suivant :

\begin{pro}

Soit $N=2^{n}3^{m}$ avec $(n,m) \in (\mathbb{N}^{*})^{2}$. Classifier les solutions monomiales minimales irréductibles de \eqref{p}.

\end{pro}

On donne dans l'annexe \ref{C} la liste des solutions monomiales minimales irréductibles de $(E_{2^{n}3^{m}})$ pour quelques valeurs de $n$ et de $m$.

\section{Entiers semi monomialement irréductibles}
\label{semi}

\noindent L'objectif de cette section est de démontrer le théorème \ref{211}.

\subsection{Le cas des entiers impairs}

Pour un entier $N$ impair, l'ensemble des éléments de la forme $\overline{2a}$ avec $a$ premier avec $N$ est exactement l'ensemble des $\overline{k}$ avec $k$ premier avec $N$. Aussi, la classification des entiers semi monomialement irréductibles impairs découle immédiatement de celle des entiers quasi monomialement irréductibles impairs et donc de la proposition \ref{36}. Toutefois, la construction effectuée pour démontrer cette proposition n'est pas la seule possible. Aussi, on donne ci-dessous une autre manière d'obtenir une solution réductible.

\begin{proposition}
\label{51}

Soient $N=nm$ avec $n$ et $m$ des entiers naturels impairs, premiers entre eux et différents de 1. Il existe un entier $1 \leq k \leq N-1$ tel que $k$ est premier avec $N$ et tel que la solution $\overline{k}$-monomiale minimale de \eqref{p} est réductible de taille $2nm$.

\end{proposition}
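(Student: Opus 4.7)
L'idée est de choisir, par le lemme chinois, un entier $k$ vérifiant $k \equiv 2 \pmod{n}$ et $k \equiv -2 \pmod{m}$. Comme $n$ et $m$ sont impairs, $k$ est automatiquement premier avec $N = nm$.

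Pour la taille, on observe que par le théorème~\ref{32}, $M_{n}(\overline{2},\ldots,\overline{2}) = Id$ dans $SL_{2}(\mathbb{Z}/n\mathbb{Z})$, si bien que $M_{r}(\overline{k},\ldots,\overline{k})$ modulo $n$ ne vaut $\pm Id$ que lorsque $n \mid r$, et vaut alors toujours $+Id$. Par ailleurs, l'identité $M_{m}(\overline{-2},\ldots,\overline{-2}) = (-1)^{m} M_{m}(\overline{2},\ldots,\overline{2})^{T} = -Id$ (rappels de la section~\ref{pre}) montre que $M_{r}(\overline{k},\ldots,\overline{k})$ modulo $m$ vaut $-Id$ si $r \equiv m \pmod{2m}$ et $+Id$ si $r \equiv 0 \pmod{2m}$. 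Le plus petit $r$ compatible avec les signes modulo $n$ et $m$ (via le lemme chinois appliqué au niveau de $SL_{2}$) est donc $r = 2nm$, avec $M_{2nm}(\overline{k},\ldots,\overline{k}) = Id$ ; les entiers plus petits mènent soit à un signe interdit modulo $n$, soit à une violation de $2m \mid r$.

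Pour la réductibilité, le plan est d'exhiber une décomposition $(\overline{k},\ldots,\overline{k}) \sim (\overline{x},\overline{k},\ldots,\overline{k},\overline{x}) \oplus (\overline{k-x},\overline{k},\ldots,\overline{k},\overline{k-x})$ avec les deux facteurs de taille supérieure ou égale à $3$. On choisit $x$ tel que $x \equiv 2 \pmod{n}$ et $x \equiv 0 \pmod{m}$, et $d$ l'unique multiple de $n$ dans $\{n, 2n, \ldots, (2m-1)n\}$ satisfaisant $d \equiv m+2 \pmod{2m}$ (dont l'existence et l'unicité découlent de $\gcd(n, 2m) = 1$). Le lemme~\ref{31}\,B, appliqué modulo $n$ à la solution $\overline{2}$-monomiale minimale de $(E_{n})$ (irréductible par le théorème~\ref{32}) et modulo $m$ à la solution $\overline{-2}$-monomiale minimale de $(E_{m})$ (également irréductible par les rappels de la section~\ref{pre}), combiné à un court calcul de signes reposant sur $d \equiv m+2 \pmod{2m}$, donne $M_{d}(\overline{x},\overline{k},\ldots,\overline{k},\overline{x}) = Id$ modulo $N$. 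Un raisonnement symétrique avec $\overline{k-x}$ (qui vaut $0$ modulo $n$ et $-2$ modulo $m$) et la taille $d' = 2nm + 2 - d$ fournit $M_{d'}(\overline{k-x},\overline{k},\ldots,\overline{k},\overline{k-x}) = -Id$. Comme $d, d' \geq n \geq 3$ et $d, d' \leq 2nm - 1$, la formule du $\oplus$ produit alors la décomposition annoncée.

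La principale difficulté concerne la gestion des signes : $PSL_{2}(\mathbb{Z}/N\mathbb{Z})$ n'étant pas le produit direct $PSL_{2}(\mathbb{Z}/n\mathbb{Z}) \times PSL_{2}(\mathbb{Z}/m\mathbb{Z})$, on ne peut pas se contenter de prendre le PPCM des ordres et il faut suivre indépendamment les apparitions de $+Id$ et $-Id$ sur chaque facteur. C'est cette comptabilité qui force la congruence fine $d \equiv m+2 \pmod{2m}$ (au lieu de la condition plus grossière $d \equiv 2 \pmod{m}$), et qui rend simultanément possibles la taille minimale $2nm$ et l'existence d'une réduction non triviale.
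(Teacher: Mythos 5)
Your proposal is correct and follows essentially the same route as the paper: the same choice $k \equiv 2 \ [n]$, $k \equiv -2\ [m]$, the same sign-tracking argument (via $M_{n}(\overline{2},\ldots,\overline{2})=Id$ and $M_{m}(\overline{-2},\ldots,\overline{-2})=-Id$) yielding the size $2nm$, and the same reducing tuple $(\overline{x},\overline{k},\ldots,\overline{k},\overline{x})$ with $x \equiv 2\ [n]$ and $x \equiv 0\ [m]$. Your length $d$, characterized as the unique multiple of $n$ congruent to $m+2$ modulo $2m$ in the admissible range, coincides with the paper's $l'=2+mw$, which the paper simply constructs more explicitly via a Euclidean division and a modular inverse; the rest (coprimality of $k$ with $N$, the complementary factor under $\oplus$) matches as well.
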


\begin{proof}

Puisque $m >1$ et $n>1$, on a $N \geq 3$. Par ailleurs, on peut supposer, sans perte de généralité, que $m>n$. 
\\
\\Les entiers $m$ et $n$ sont premiers entre eux, donc, par le théorème de Bézout, il existe $(a,b) \in \mathbb{Z}^{2}$ tels que $am+bn=1$. On considère $k=2am-2bn$. En particulier, $k \equiv 2 [n]$ et $k \equiv -2 [m]$.
\\
\\Par le théorème \ref{32}, la taille de la solution $(k+n\mathbb{Z})$-monomiale minimale de $(E_{n})$ est égale à $n$ et la taille de la solution $(k+m\mathbb{Z})$-monomiale minimale de $(E_{m})$ est égale à $m$. De plus, on a :
\[M_{n}(2+n\mathbb{Z},\ldots,2+n\mathbb{Z})=(1+n\mathbb{Z})Id;\]
\[M_{m}(-2+m\mathbb{Z},\ldots,-2+m\mathbb{Z})=((-1)^{m}+m\mathbb{Z})Id=(-1+m\mathbb{Z})Id.\]

\noindent Comme $n$ et $m$ divisent $N$, la taille $l$ de la solution $\overline{k}$-monomiale minimale de \eqref{p} est un multiple de $n$ et de $m$, c'est-à-dire un multiple de $mn$ (puisque $n$ et $m$ sont premiers entre eux). Or, 
\[M_{nm}(2+n\mathbb{Z},\ldots,2+n\mathbb{Z})=(1+n\mathbb{Z})^{m}Id=(1+n\mathbb{Z})Id \neq (-1+n\mathbb{Z})Id\] et 
\[M_{nm}(-2+m\mathbb{Z},\ldots,-2+m\mathbb{Z})=(-1+m\mathbb{Z})^{n}Id=(-1+m\mathbb{Z})Id \neq (1+m\mathbb{Z})Id.\]

\noindent Ainsi, $l \neq nm$. En revanche, on a
\[M_{2nm}(2+n\mathbb{Z},\ldots,2+n\mathbb{Z})=(1+n\mathbb{Z})^{2m}Id=(1+n\mathbb{Z})Id;\]
\[M_{2nm}(-2+m\mathbb{Z},\ldots,-2+m\mathbb{Z})=(-1+m\mathbb{Z})^{2n}Id=(1+m\mathbb{Z})Id.\]

\noindent Donc, par le lemme chinois ($m$ et $n$ premiers entre eux), on a $l=2nm$.
\\
\\On va maintenant démontrer que la solution $\overline{k}$-monomiale minimale de \eqref{p} est réductible. 
\\
\\On effectue la division euclidienne de $m$ par $n$. Il existe $(q,r) \in \mathbb{N}^{2}$ tel que $m=qn+r$ et $0 \leq r < n$. Le pgcd de $r$ et de $n$ est le même que celui de $m$ et $n$. Donc, $r$ et $n$ sont premiers entre eux. Ainsi, $r$ est inversible modulo $n$ et il existe un entier $u$ tel que $ru \equiv 1 [n]$. En particulier, on a $r(-2u) \equiv -2 [n]$. Donc, il existe $v \in [\![1;n-1]\!]$ tel que $rv+2 \equiv 0 [n]$.
\\
\\Si $v$ est impair alors on pose $w=v$. Si $v$ est pair alors on pose $w=v+n$. Dans les deux cas, $w$ est impair, $w \in [\![1;2n-1]\!]$ et $rw+2 \equiv 0 [n]$. On pose $l'=2+mw$ et $x=2am$. On a :
 
\begin{itemize}
\item $x \equiv 2 [n]$ et $l'=2+mw=2+(qn+r)w=(rw+2)+qn \equiv 0 [n]$. Donc, 
\[M_{l'}(x+n\mathbb{Z},k+n\mathbb{Z},\ldots,k+n\mathbb{Z},x+n\mathbb{Z})=M_{l'}(2+n\mathbb{Z},\ldots,2+n\mathbb{Z})=(1+n\mathbb{Z})Id.\]

\item $x \equiv 0 [m]$ et $l'=2+mw \equiv 2 [m]$. Donc, 
\begin{eqnarray*}
M &=& M_{l'}(x+m\mathbb{Z},k+m\mathbb{Z},\ldots,k+m\mathbb{Z},x+m\mathbb{Z}) \\
  &=& M_{l'}(0+m\mathbb{Z},-2+m\mathbb{Z}\ldots,-2+m\mathbb{Z},0+m\mathbb{Z}) \\
  &=& (-1+m\mathbb{Z})(-1+m\mathbb{Z})^{w}Id \\
	&=& (-1+m\mathbb{Z})^{w+1}Id \\
	&=& (1+m\mathbb{Z})Id~~({\rm car}~w~{\rm est~impair}).
\end{eqnarray*}

\end{itemize}

\noindent Par le lemme chinois ($m$ et $n$ premiers entre eux), $M_{l'}(\overline{x},\overline{k},\ldots,\overline{k},\overline{x})=Id$. De plus, \[l'=2+mw \leq 2+m(2n-1)=2mn-(m-2) \leq 2mn-1.\] 
\noindent Ainsi, la solution $\overline{k}$-monomiale minimale de \eqref{p} est réductible.
\\
\\Supposons par l'absurde que $k$ et $N$ ne soient pas premiers entre eux. Il existe un nombre premier $p$ qui divise $k$ et $N=nm$. Par le lemme d'Euclide, $p$ divise $n$ ou $p$ divise $m$. On peut supposer, sans perte de généralité, que $p$ divise $n$. Comme $m$ et $n$ sont premiers entre eux, $p$ ne divise pas $m$. Ainsi, $p$ divise $k+2bn=2am$. Comme $p$ ne divise pas $2m$, $p$ divise $a$, par le lemme de Gauss. Donc, $p$ divise $am+bn=1$, ce qui est absurde. Ainsi, $k$ est premier avec $N$.

\end{proof}

\begin{examples}
{\rm Reprenons les deux exemples considérés avec la proposition \ref{36}.
\begin{itemize}
\item $N=15=3 \times 5$. On a $n=3$ et $m=5$. On a $5\times (-1) +3 \times 2=1$ et on pose $a=-1$ et $b=2$. Posons $k=2 \times 5 \times (-1)-2 \times 3 \times 2= -22 \equiv 8 [15]$. $5=3+2$, $2 \times 5 +2 \equiv 0 [3]$, c'est-à-dire $r=2$, $w=5$ et $x=2 \times 5 \times (-1) \equiv 5 [15]$. Par le résultat précédent, la solution $\overline{k}$-monomiale minimale de \eqref{p} est réductible. Elle est de taille 30 et on peut la réduire avec la solution $(\overline{5},\overline{k},\ldots,\overline{k},\overline{5})$ de taille $l'=2+5 \times 5=27$. Notons que l'on peut également la réduire avec la solution $(\overline{3},\overline{k},\overline{k},\overline{k},\overline{3})$, puisque $(\overline{k},\ldots,\overline{k})=(\overline{3},\overline{k},\overline{k},\overline{k},\overline{3}) \oplus (\overline{5},\overline{k},\ldots,\overline{k},\overline{5})$.
\\

\item $N=17~441=107 \times 163$. On a $n=107$ et $m=163$. On a $107 \times 32 +163 \times (-21)=3424-3423=1$ et on pose $a=-21$ et $b=32$. Posons $k=2 \times 163 \times (-21)-2 \times 107 \times 32=-6846-6848=-13~694 \equiv 3747 [17~441]$. On a $163=107+56$ et $56 \times 149 +2 \equiv 0 [107]$, c'est-à-dire $r=56$, $w=149$ et $x=2 \times 163 \times (-21)=-6846$. Par le résultat précédent, la solution $\overline{k}$-monomiale minimale de \eqref{p} est réductible. Elle est de taille 34~882 et on peut la réduire avec la solution $(\overline{-6846},\overline{k},\ldots,\overline{k},\overline{-6846})$ de taille $l'=2+163 \times 149=24~289$.

\end{itemize}
}
\end{examples}

\noindent Grâce à cette proposition et au théorème \ref{33}, on a redémontré le point i) du théorème \ref{211}.

\subsection{Le cas des entiers pairs}

On s'intéresse maintenant à l'étude des entiers semi monomialement irréductibles pairs. Grâce aux résultats déjà établis, on a :

\begin{proposition}
\label{52}

Les entiers de la forme $2p^{n}$, avec $p$ premier et $n \geq 1$, sont semi monomialement irréductibles.

\end{proposition}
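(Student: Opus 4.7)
Le plan est de séparer l'argument selon que $p=2$ ou $p$ est impair. Dans le premier cas, $N = 2^{n+1}$ est divisible par 4 (puisque $n \geq 1$), et il faut considérer les solutions $\overline{2a}$-monomiales minimales avec $a$ premier avec $N$, c'est-à-dire $a$ impair. Leur irréductibilité découle alors immédiatement du troisième item du théorème \ref{33} ii).

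Pour le cas $p$ impair, la situation est plus délicate car $N = 2p^{n}$ est pair non divisible par 4, et il faut considérer $k = 2a$ avec $a$ premier avec $p^{n}$. Je commencerais par observer que $\gcd(k, p^{n}) = 1$ (puisque $p$ est impair), ce qui permet d'appliquer le théorème \ref{33} i) pour conclure que la solution $(k+p^{n}\mathbb{Z})$-monomiale minimale de $(E_{p^{n}})$ est irréductible, de taille $l \geq 3$ (le fait que $\overline{k}$ soit non nul dans $\mathbb{Z}/p^{n}\mathbb{Z}$ exclut $l \leq 2$). J'utiliserais ensuite le lemme chinois, en suivant la méthode de la preuve de la proposition \ref{37}, pour déterminer la taille $r$ de la solution $\overline{k}$-monomiale minimale de \eqref{p} : puisque $k$ est pair, sa projection modulo 2 est $\overline{0}$, dont la solution monomiale minimale est de taille 2, ce qui donne $r = l$ si $l$ est pair et $r = 2l$ si $l$ est impair.

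Pour conclure à l'irréductibilité, je raisonnerais par l'absurde en supposant l'existence d'une solution $(\overline{x}, \overline{k}, \ldots, \overline{k}, \overline{x})$ de \eqref{p} de taille $l'$ avec $3 \leq l' \leq r-1$. En réduisant modulo $p^{n}$ et en appliquant le lemme \ref{31} B) à la solution $\overline{k}$-monomiale minimale de $(E_{p^{n}})$, j'obtiendrais les contraintes $l' \equiv 0 [l]$ ou $l' \equiv 2 [l]$. Si $l$ est pair, alors $r = l$ et aucune valeur $l' \in [\![3;l-1]\!]$ ne satisfait ces congruences, d'où une contradiction directe. Si $l$ est impair, alors $r = 2l$ et les seules valeurs admissibles dans $[\![3;2l-1]\!]$ sont $l' = l$ ou $l' = l+2$, toutes deux impaires ; la réduction modulo 2 fournit alors une solution de $(E_{2})$ de la forme $(\overline{x}, \overline{0}, \ldots, \overline{0}, \overline{x})$ de taille impaire, ce qui contredit le lemme \ref{31} A) ii) appliqué à la solution $\overline{0}$-monomiale minimale de $(E_{2})$ (qui est de taille 2).

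Le point délicat principal sera la gestion du cas $l$ impair : il faudra vérifier que les deux valeurs admissibles de $l'$ sont bien impaires, afin que l'argument modulo 2 permette de conclure. Cette proposition peut s'interpréter comme un assouplissement de la proposition \ref{37}, où l'on remplace l'hypothèse $\gcd(k, N)=1$ par $\gcd(a, p^{n})=1$, la projection modulo 2 restant contrôlable grâce à la forme particulière (de taille 2) de la solution $\overline{0}$-monomiale minimale de $(E_{2})$.
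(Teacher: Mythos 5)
Votre démonstration est correcte et suit essentiellement la même démarche que celle de l'article : pour $p=2$ c'est le théorème \ref{33} ii), et pour $p$ impair l'article renvoie mot pour mot à l'argument du cas \og $k=2a$ avec $a$ non divisible par 3 \fg{} de la preuve du théorème \ref{28}, c'est-à-dire le calcul de $r={\rm ppcm}(2,l)$ par le lemme chinois puis l'élimination des tailles $l'$ possibles via le lemme \ref{31} appliqué modulo $p^{n}$ et modulo 2. La seule variante, sans incidence, est que vous encadrez $l'$ par $r-1$ et utilisez le lemme \ref{31} B) modulo $p^{n}$ pour obtenir $l'\in\{l,l+2\}$, là où l'article borne $l'$ par $l+1$ et conclut avec $l'\in\{l,l+1\}$.
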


\begin{proof}

Si $p=2$, c'est une conséquence du théorème \ref{33}. Si $p$ est impair, la démonstration est identique à la preuve du cas \og $k=2a$ avec $a$ non divisible par 3 \fg étudié pour établir le théorème \ref{28}.

\end{proof}

\begin{examples}
{\rm $34=2 \times 17$, $2662=2\times 11^{3}$ et $24~334=2\times 23^{3}$ sont semi monomialement irréductibles. 
}

\end{examples}

Il reste à considérer les entiers de la forme $N=2^{a}3^{b}5^{c}7^{d}17^{e}31^{f}127^{g}$ et $N=2\times 3^{b}5^{c}$. Pour cela, on commence par le lemme suivant :

\begin{lemma}
\label{520}

Soient $p$ un nombre premier, $k$ un entier et $n \in \mathbb{N}^{*}$. Soit $r$ la taille de la solution $\overline{k}$-monomiale minimale de $(E_{p^{n}})$. 
\\
\\i) La taille de la solution $\overline{k}$-monomiale minimale de $(E_{p^{n+1}})$ est égale à $r$ ou à $pr$. 
\\
\\ii) Si $p$ est impair et si $r$ est pair alors $M_{r}(\overline{k},\ldots,\overline{k})=-Id$.
\\
\\iii) Soit $h$ la taille de la solution $(k+p\mathbb{Z})$-monomiale minimale de $(E_{p})$. Si $p$ est impair alors $r \equiv \pm h [4]$. En particulier, $r \equiv 2 [4]$ si et seulement si $h \equiv 2 [4]$.

\end{lemma}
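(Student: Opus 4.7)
\emph{Plan de démonstration.} On établit les trois points successivement, en s'appuyant sur le lemme \ref{30} et sur quelques faits élémentaires concernant $SL_2(\mathbb{Z}/p^{n}\mathbb{Z})$.

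Pour i), le lemme \ref{30} fournit déjà que la taille $r'$ de la solution $\overline{k}$-monomiale minimale de $(E_{p^{n+1}})$ est un multiple de $r$ ; il reste donc à montrer $r' \in \{r, pr\}$. Par hypothèse, il existe $\epsilon \in \{-1, 1\}$ tel que $M_r(\overline{k},\ldots,\overline{k}) = \overline{\epsilon}\,Id$ modulo $p^{n}$. Relevée dans $SL_2(\mathbb{Z}/p^{n+1}\mathbb{Z})$, cette égalité s'écrit $M_r(k+p^{n+1}\mathbb{Z},\ldots,k+p^{n+1}\mathbb{Z}) = \epsilon\,Id + p^{n} A$ pour une certaine matrice $A$. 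Comme $\epsilon\,Id$ et $p^{n} A$ commutent, le développement binomial de $(\epsilon\,Id + p^{n} A)^{p}$ se simplifie modulo $p^{n+1}$ : le terme d'indice $1$ vaut $\epsilon^{p-1}\,p^{n+1}\, A \equiv 0$, et les termes d'indice $j \geq 2$ contiennent $p^{nj}$, multiple de $p^{n+1}$ dès que $n \geq 1$. Il ne subsiste donc que $M_{pr}(\overline{k},\ldots,\overline{k}) = \epsilon^{p}\,Id = \pm Id$ dans $SL_2(\mathbb{Z}/p^{n+1}\mathbb{Z})$, ce qui donne $r' \mid pr$. Combinée à $r \mid r'$ et à la primalité de $p$, cela force $r' \in \{r, pr\}$.

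Pour ii), je commencerais par classifier les involutions de $SL_2(\mathbb{Z}/p^{n}\mathbb{Z})$ pour $p$ impair. Par réduction modulo $p$, une involution dans $SL_2(\mathbb{F}_{p})$ est annulée par $X^{2}-1=(X-1)(X+1)$, qui scinde puisque $p$ est impair ; combiné à $\det = 1$, ceci impose que la réduction soit $\pm Id$. Quitte à changer de signe, supposons $A \equiv Id \pmod{p}$ : en écrivant $A = Id + p\,C$ avec $C \in M_2(\mathbb{Z}/p^{n}\mathbb{Z})$ et en développant $A^{2} = Id$, on trouve $p\,C\,(2\,Id + p\,C) = 0$ dans $M_2(\mathbb{Z}/p^{n}\mathbb{Z})$. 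Or $\det(2\,Id + p\,C) \equiv 4 \pmod{p}$ est inversible (puisque $p$ est impair), donc $2\,Id + p\,C$ est inversible, et il s'ensuit que $p\,C = 0$, soit $A = Id$. Le cas $A \equiv -Id$ est symétrique. Maintenant, si $r$ est pair et $M_r(\overline{k},\ldots,\overline{k}) = Id$, alors $M_{r/2}(\overline{k},\ldots,\overline{k})^{2} = Id$, donc par ce qui précède $M_{r/2}(\overline{k},\ldots,\overline{k}) = \pm Id$, contredisant la minimalité de $r$. On conclut $M_r(\overline{k},\ldots,\overline{k}) = -Id$.

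Pour iii), en appliquant i) le long de la tour $(E_{p}), (E_{p^{2}}), \ldots, (E_{p^{n}})$, la taille de la solution monomiale minimale reste inchangée ou est multipliée par $p$ à chaque étape, de sorte que $r = h\,p^{s}$ pour un certain entier $0 \leq s \leq n-1$. Comme $p$ est impair, $p \equiv \pm 1 \pmod{4}$, donc $p^{s} \equiv \pm 1 \pmod{4}$ dans tous les cas, et $r = h\,p^{s} \equiv \pm h \pmod{4}$. Pour la dernière assertion, $p^{s}$ étant impair, on a $r = h\,p^{s} \equiv 2 \pmod{4}$ si et seulement si $h \equiv 2 \pmod{4}$. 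L'obstacle principal est la précision requise dans ii) : il faut soigneusement vérifier que $2\,Id + p\,C$ est inversible dans $M_2(\mathbb{Z}/p^{n}\mathbb{Z})$ ; le reste se ramène à des calculs binomiaux et à l'itération de i).
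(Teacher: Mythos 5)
Votre démonstration est correcte. Les points i) et iii) suivent essentiellement la même voie que l'article : pour i), le développement binomial de $(\epsilon\,Id+p^{n}A)^{p}$, dont tous les termes d'indice $\geq 1$ s'annulent modulo $p^{n+1}$, combiné au lemme \ref{30} et à la primalité de $p$ ; pour iii), l'itération de i) le long de la tour des $(E_{p^{j}})$ donnant $r=hp^{s}$, puis la congruence $p^{s}\equiv \pm 1~[4]$. En revanche, pour ii) vous empruntez un chemin réellement différent. Vous classifiez d'abord les involutions de $SL_{2}(\mathbb{Z}/p^{n}\mathbb{Z})$ pour $p$ impair : la réduction modulo $p$ est $\pm Id$ par diagonalisabilité et déterminant $1$, puis l'écriture $A=\pm(Id+pC)$ et l'identité $pC(2\,Id+pC)=0$, jointe à l'inversibilité de $2\,Id+pC$ (déterminant congru à $4$ modulo $p$), forcent $A=\pm Id$ ; vous appliquez ensuite ce fait à $M_{r/2}(\overline{k},\ldots,\overline{k})$ pour exclure $M_{r}(\overline{k},\ldots,\overline{k})=Id$ par minimalité de $r$. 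L'article procède au contraire par récurrence sur $n$ : le cas $n=1$ est traité par la même diagonalisation sur le corps $\mathbb{Z}/p\mathbb{Z}$, et l'hérédité s'obtient en combinant le point i) avec une réduction modulo $p^{n}$ aboutissant à l'absurdité $1+p^{n}\mathbb{Z}=-1+p^{n}\mathbb{Z}$. Votre argument est plus direct (pas de récurrence, pas de recours à i) dans la preuve de ii)) et il rend immédiatement visible la raison de l'échec pour $p=2$ : le déterminant de $2\,Id+2C$ est alors divisible par $2$ et la matrice n'est plus inversible, ce qui est cohérent avec le contre-exemple $N=4$ signalé dans la remarque suivant le lemme. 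La version de l'article a pour seul avantage de ne mobiliser que les outils déjà mis en place (lemme \ref{30} et point i)) ; les deux preuves sont également valables.
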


\begin{proof}

i) Il existe $A \in {\rm M}_{2}(\mathbb{Z})$ et $\epsilon \in \{\pm 1 \}$ tel que $M_{r}(k,\ldots,k)=\epsilon Id +p^{n}A$. On a par le binôme de Newton :

\[M_{pr}(k,\ldots,k)=(\epsilon Id +p^{n}A)^{p}=\sum_{i=0}^{p} {p \choose i} \epsilon^{p-i}p^{ni}A^{i}.\]

\noindent Comme $p^{n+1}$ divise ${p \choose i} \epsilon^{p-i}p^{ni}$ pour $i \geq 1$, on a \[M_{pr}(k+p^{n+1}\mathbb{Z},\ldots,k+p^{n+1}\mathbb{Z})=(\epsilon^{p}+p^{n+1}\mathbb{Z})Id.\]

\noindent Ainsi, la taille $l$ de la solution $\overline{k}$-monomiale minimale de $(E_{p^{n+1}})$ divise $pr$. Or, $l$ est un multiple de $r$ (lemme \ref{30}) et $p$ est premier. Donc, $l=r$ ou $l=pr$. 
\\
\\ ii) On raisonne par récurrence sur $n$.
\\
\\Soit $h$ la taille de la solution $(k+p\mathbb{Z})$-monomiale minimale de $(E_{p})$, qu'on suppose paire.
\\
\\Soit $A=M_{1}(k+p\mathbb{Z})$. Par définition de $h$, $A^{h}=\pm Id$. Si $A^{h}=Id$ alors $(A^{\frac{h}{2}}-Id)(A^{\frac{h}{2}}+Id)=0$. Donc, $(X-1)(X+1)$ est un polynôme annulateur scindé à racines simples sur $\mathbb{Z}/p\mathbb{Z}$ de $B=A^{\frac{h}{2}}$. Ainsi, $B$ est diagonalisable et ses valeurs propres sont égales à $\pm 1+p\mathbb{Z}$ ($\mathbb{Z}/p\mathbb{Z}$ est un corps puisque $p$ est premier). Comme $B$ est de déterminant $1+p\mathbb{Z}$, $B$ est semblable à $\pm Id$, donc égale à $\pm Id$, ce qui contredit la minimalité de $h$. Ainsi, $A^{h}=-Id$.
\\
\\Supposons qu'il existe un $n \in \mathbb{N}^{*}$ tel que les solutions de taille paire de $(E_{p^{n}})$ vérifient la propriété souhaitée. Soit $k+p^{n+1}\mathbb{Z}$ tel que la solution $(k+p^{n+1}\mathbb{Z})$-monomiale minimale de $(E_{p^{n+1}})$ est de taille paire (on peut toujours trouver un tel $k$ puisqu'on peut par exemple prendre $k=0$). Notons $l$ cette taille. Il existe $\epsilon \in \{-1, 1\}$ tel que $M_{l}(k+p^{n+1}\mathbb{Z},\ldots,k+p^{n+1}\mathbb{Z})=(\epsilon+p^{n+1}\mathbb{Z})Id$.
\\
\\Soit $r$ la taille de la solution $(k+p^{n}\mathbb{Z})$-monomiale minimale de $(E_{p^{n}})$. Par i), $l=r$ ou $l=pr$. Comme $l$ est pair et $p$ impair, $r$ est pair. Par hypothèse de récurrence, $M_{r}(k+p^{n}\mathbb{Z},\ldots,k+p^{n}\mathbb{Z})=(-1+p^{n}\mathbb{Z})Id$. On distingue deux cas :
\begin{itemize}
\item $l=r$. Si $\epsilon=1$ alors $M_{r}(k+p^{n+1}\mathbb{Z},\ldots,k+p^{n+1}\mathbb{Z})=(1+p^{n+1}\mathbb{Z})Id$. En particulier, on a $M_{r}(k+p^{n}\mathbb{Z},\ldots,k+p^{n}\mathbb{Z})=(1+p^{n}\mathbb{Z})Id$. Or, $1+p^{n}\mathbb{Z} \neq -1+p^{n}\mathbb{Z}$, puisque $p^{n} \geq 3$. Donc, $\epsilon=-1$.
\\
\item $l=pr$. Si $\epsilon=1$ alors $M_{pr}(k+p^{n+1}\mathbb{Z},\ldots,k+p^{n+1}\mathbb{Z})=(1+p^{n+1}\mathbb{Z})Id$. En particulier, on a $M_{pr}(k+p^{n}\mathbb{Z},\ldots,k+p^{n}\mathbb{Z})=(1+p^{n}\mathbb{Z})Id$. Or, 
\begin{eqnarray*}
M_{pr}(k+p^{n}\mathbb{Z},\ldots,k+p^{n}\mathbb{Z}) &=& M_{r}(k+p^{n}\mathbb{Z},\ldots,k+p^{n}\mathbb{Z})^{p} \\
                                                   &=& (-1+p^{n}\mathbb{Z})^{p}Id \\
																									 &=& (-1+p^{n}\mathbb{Z})Id~~~~(p~{\rm impair}).
\end{eqnarray*}																									
\noindent De plus, $1+p^{n}\mathbb{Z} \neq -1+p^{n}\mathbb{Z}$, puisque $p^{n} \geq 3$. Donc, $\epsilon=-1$.
\\
\end{itemize} 

\noindent Par récurrence, le résultat est démontré.
\\
\\iii) Soit $h$ la taille de la solution $(k+p\mathbb{Z})$-monomiale minimale de $(E_{p})$.
\\
\\Montrons par récurrence sur $n$ que la taille de la solution $(k+p^{n}\mathbb{Z})$-monomiale minimale de $(E_{p^{n}})$ est de la forme $hp^{a}$ avec $a \in \mathbb{N}$. Si $n=1$ le résultat est vrai (on prend $a=0$). Supposons qu'il existe un $n \in \mathbb{N}^{*}$ tel que la taille $r$ de la solution $(k+p^{n}\mathbb{Z})$-monomiale minimale de $(E_{p^{n}})$ est de la forme $hp^{a}$ avec $a \in \mathbb{N}$. Par i), la taille de la solution $(k+p^{n+1}\mathbb{Z})$-monomiale minimale de $(E_{p^{n+1}})$ est égale à $r=hp^{a}$ ou à $pr=hp^{a+1}$. Par récurrence, le résultat est démontré.
\\
\\Comme $p$ est impair, on a, pour tout $a \in \mathbb{N}$, $p^{a} \equiv \pm 1 [4]$. Ainsi, pour tout $a \in \mathbb{N}$, $hp^{a} \equiv \pm h [4]$ et donc $hp^{a} \equiv 2 [4]$ si et seulement si $h \equiv 2 [4]$.
 
\end{proof}

\begin{remark}
{\rm
Le résultat de ii) n'est plus vrai si $p=2$. Par exemple, si $N=4$ alors la solution $\overline{2}$-monomiale minimale de \eqref{p} est de taille 4 avec $M_{4}(\overline{2},\overline{2},\overline{2},\overline{2})=Id$. De plus, on ne peut pas obtenir un résultat similaire à ii) pour les solutions monomiales minimales de taille impaire puisqu'on a la formule : \[M_{r}(\overline{k},\ldots,\overline{k})=\overline{(-1)^{r}}M_{r}(\overline{-k},\ldots,\overline{-k})^{T}.\]
} 
\end{remark}

\noindent On peut maintenant établir le résultat général ci-dessous :

\begin{proposition}
\label{53}

Soient $p \in \mathbb{P}$ impair, $a$ un entier impair non divisible par $p$, $m \geq 1$ et $N=4 \times p^{m}$. Notons $h$ la taille de la solution $(2a+p \mathbb{Z})$-monomiale minimale de $(E_{p})$. 
\begin{itemize}
\item Si $h \equiv 2 [4]$
alors la solution $\overline{2a}$-monomiale minimale de \eqref{p} est réductible.
\item Si $h \equiv -1, 0, 1 [4]$ alors la solution $\overline{2a}$-monomiale minimale de \eqref{p} est irréductible.
\end{itemize}

\noindent En particulier, $N$ est semi monomialement irréductible si et seulement si pour tout $a$ impair non divisible par $p$ la taille de la solution $(2a+p \mathbb{Z})$-monomiale minimale de $(E_{p})$ n'est pas congrue à 2 modulo 4.

\end{proposition}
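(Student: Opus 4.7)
L'approche consiste à utiliser le lemme chinois pour $N=4p^{m}$ et à ramener l'étude de la solution $\overline{2a}$-monomiale minimale de \eqref{p} à celles de $(E_{4})$ et de $(E_{p^{m}})$, déjà bien comprises. D'abord, je rappellerais que, par le théorème \ref{33} ii), la solution $(2a+4\mathbb{Z})$-monomiale minimale de $(E_{4})$ est irréductible de taille 4 avec $M_{4}=Id$, et que par le théorème \ref{33} i), la solution $(2a+p^{m}\mathbb{Z})$-monomiale minimale de $(E_{p^{m}})$ est irréductible (puisque $p$ est impair et $2a$ premier avec $p$) ; je noterais $r$ sa taille et $\alpha_{p} \in \{\pm 1\}$ son signe. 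Par le lemme \ref{520}, on a $r \equiv \pm h [4]$ (ainsi $r \equiv 2 [4]$ si et seulement si $h \equiv 2 [4]$) et $\alpha_{p}=-1$ dès que $r$ est pair.

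Ensuite, je calculerais la taille $n$ de la solution $\overline{2a}$-monomiale minimale de \eqref{p} par compatibilité de signes via le lemme chinois : on trouve $n=4r$ lorsque $r$ est impair et $n=2r$ lorsque $r$ est pair. Supposant alors l'existence d'une solution réductrice $(\overline{x},\overline{2a},\ldots,\overline{2a},\overline{x})$ de taille $3 \leq d \leq n-1$, j'appliquerais le lemme \ref{31} B modulo 4 et modulo $p^{m}$ : chaque facteur impose soit $d \equiv 0$ et $x \equiv 2a$, soit $d \equiv 2$ et $x \equiv 0$ modulo la taille pertinente. Ceci engendre quatre configurations. Les deux configurations où l'alternative est identique des deux côtés s'éliminent rapidement : celle avec $x \equiv 2a$ partout force, par compatibilité des signes modulo $N$, $d \equiv 0 [n]$, contredisant $d \leq n-1$ ; celle avec $x \equiv 0$ partout, combinée à $M_{1}(\overline{0})^{2}=-Id$ et au lemme \ref{31} A, impose à $d-2$ d'être un multiple non nul de $n$ strictement inférieur à $n$, absurde.

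Le point technique central est le traitement des deux configurations \og croisées \fg. Pour chaque $d$ potentiellement admissible, il faut s'assurer que les signes obtenus modulo 4 (issus de $\alpha_{4}=1$) et modulo $p^{m}$ (une puissance de $\alpha_{p}$) coïncident, afin que la matrice $M_{d}$ soit effectivement $\pm Id$ modulo $N$. Je procéderais alors à une disjonction suivant la classe de $r$ modulo 4 :
\begin{itemize}
\item Si $r$ est impair, l'unique $d$ admissible dans $[\![3;n-1]\!]$ dans chaque configuration donne un exposant pair de $\alpha_{p}$, donc un signe $+1$ incompatible avec le $-1$ requis : aucune réduction n'est possible.
\item Si $r \equiv 0 [4]$, les congruences imposées à $d$ modulo 4 et modulo $r$ sont inconciliables.
\item Si $r \equiv 2 [4]$, les choix explicites $d=r$ et $d=r+2$, accompagnés de $\overline{x}$ reconstruit par le lemme chinois à partir des résidus prescrits, satisfont toutes les contraintes et fournissent des solutions réductrices.
\end{itemize}

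Ceci démontre l'équivalence entre la réductibilité de la solution $\overline{2a}$-monomiale minimale et $h \equiv 2 [4]$. L'assertion \og en particulier \fg en résulte immédiatement, puisque $N$ est semi monomialement irréductible si et seulement si, pour tout $a$ impair non divisible par $p$, la solution $\overline{2a}$-monomiale minimale est irréductible, c'est-à-dire si et seulement si $h \not\equiv 2 [4]$ pour tout tel $a$.
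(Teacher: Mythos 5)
Votre démonstration est correcte et suit essentiellement la même démarche que celle de l'article : décomposition par le lemme chinois modulo $4$ et modulo $p^{m}$, détermination de la taille de la solution minimale via les lemmes \ref{30} et \ref{520} et le théorème \ref{33}, contraintes sur une éventuelle solution réductrice $(\overline{x},\overline{2a},\ldots,\overline{2a},\overline{x})$ via le lemme \ref{31}, puis construction explicite d'une réduction de taille $r$ lorsque $r \equiv 2\,[4]$. Les seules différences sont d'ordre organisationnel : vous disjoignez suivant les quatre configurations et la classe de $r$ modulo $4$ là où l'article disjoint suivant le signe $\epsilon$ puis $l$ modulo $4$, et vous utilisez la borne $d \leq n-1$ au lieu de la borne plus fine $d \leq n/2+1$, ce qui ne change rien au fond.
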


\begin{proof}

Notons $r$ la taille de la solution $\overline{2a}$-monomiale minimale de \eqref{p} et $l$ la taille de la solution $(2a+p^{m} \mathbb{Z})$-monomiale minimale de $(E_{p^{m}})$. Par le lemme \ref{520}, $l \equiv \pm h [4]$. 
\\
\\Comme $p^{m}$ divise $N$, $r$ est un multiple de $l$. De plus, la solution $(2a+4 \mathbb{Z})$-monomiale minimale de $(E_{4})$ est de taille 4 avec $M_{4}(2a+4 \mathbb{Z},\ldots,2a+4 \mathbb{Z})=(1+4\mathbb{Z}) Id$. Ainsi, $r$ est un multiple de ${\rm ppcm}(4,l)$.
\\
\\Il existe $\epsilon \in \{1, -1\}$ tel que $M_{l}(2a+p^{m} \mathbb{Z},\ldots,2a+p^{m} \mathbb{Z})=(\epsilon +p^{m}\mathbb{Z}) Id$. On considère les deux valeurs possibles de $\epsilon$ :
\\
\\ \underline{i) On suppose $\epsilon=1$ :} Par le lemme \ref{520}, $l$ est impair et donc $l \equiv \pm 1 [4]$.
\\
\\On a ${\rm ppcm}(4,l)=4l$ et $M_{4l}(2a,\ldots,2a)=Id$ modulo $p^{m}$ et modulo 4. Par le lemme chinois, $r=4l$. Supposons par l'absurde que la solution $\overline{2a}$-monomiale minimale de \eqref{p} est réductible. Il existe une solution $(\overline{x},\overline{2a},\ldots,\overline{2a},\overline{x})$ de \eqref{p} de taille $3 \leq l' \leq 2l+1$. 
\\
\\Comme $p^{m}$ divise $N$, le $l'$-uplet $(x+p^{m} \mathbb{Z},2a+p^{m} \mathbb{Z},\ldots,2a+p^{m} \mathbb{Z},x+p^{m} \mathbb{Z})$ est une solution de $(E_{p^{m}})$. Comme la solution $(2a+p^{m} \mathbb{Z})$-monomiale minimale de $(E_{p^{m}})$ est irréductible de taille $l$, on a nécessairement $l' \in \{l, l+2, 2l\}$ (voir lemme \ref{31}). 
\begin{itemize}
\item Si $l'=l$. Comme $4$ divise $N$, le $l$-uplet $(x+4 \mathbb{Z},2a+4 \mathbb{Z},\ldots,2a+4 \mathbb{Z},x+4 \mathbb{Z})$ est une solution de $(E_{4})$ ce qui est impossible puisque la solution $(2a+4 \mathbb{Z})$-monomiale minimale de $(E_{4})$ est irréductible de taille $4$ et $l$ est impair (voir lemme \ref{31}).
\item Si $l'=l+2$ alors on peut effectuer le même raisonnement que dans le cas précédent.
\item Si $l'=2l$. On a $l'\equiv 2 [4]$ et, par le lemme \ref{31}, $x+p^{m} \mathbb{Z}=2a+p^{m} \mathbb{Z}$ et $x+4 \mathbb{Z}=0+4 \mathbb{Z}$. Ainsi, 
\[M_{l'}(x+p^{m} \mathbb{Z},2a+p^{m} \mathbb{Z},\ldots,2a+p^{m} \mathbb{Z},x+p^{m} \mathbb{Z})=(\epsilon+p^{m}  \mathbb{Z})^{2}Id=(1+p^{m}  \mathbb{Z})Id\]
 et 
\[M_{l'}(x+4 \mathbb{Z},2a+4 \mathbb{Z},\ldots,2a+4 \mathbb{Z},x+4 \mathbb{Z})=(-1+4  \mathbb{Z})Id.\] 
\noindent Ceci est absurde.

\end{itemize}

\noindent \underline{ii) On suppose $\epsilon=-1$ :} On considère les différentes valeurs de $l$ modulo 4.
\\
\\ \uwave{On suppose $l \equiv \pm 1 [4]$.} On a ${\rm ppcm}(4,l)=4l$ et on a $M_{4l}(2a,\ldots,2a)=Id$ modulo $p^{m}$ et modulo 4. On peut effectuer le même raisonnement que dans le cas $\epsilon=1$.
\\
\\\uwave{On suppose $l \equiv 0 [4]$.} On a ${\rm ppcm}(4,l)=l$. Toutefois, comme $\epsilon=-1$, $r \neq l$. En revanche, on a $M_{2l}(2a,\ldots,2a)= Id$ modulo $p^{m}$ et modulo 4. Par le lemme chinois, $r=2l$. Supposons par l'absurde que la solution $\overline{k}$-monomiale minimale de \eqref{p} est réductible. Il existe une solution $(\overline{x},\overline{2a},\ldots,\overline{2a},\overline{x})$ de \eqref{p} de taille $3 \leq l' \leq l+1$. 
\\
\\Comme $p^{m}$ divise $N$, le $l'$-uplet $(x+p^{m} \mathbb{Z},2a+p^{m} \mathbb{Z},\ldots,2a+p^{m} \mathbb{Z},x+p^{m} \mathbb{Z})$ est une solution de $(E_{p^{m}})$. Comme la solution $(2a+p^{m} \mathbb{Z})$-monomiale minimale de $(E_{p^{m}})$ est irréductible de taille $l$, on a nécessairement $l'=l$ (voir lemme \ref{31}). Comme $l'\equiv 0 [4]$, on a, par le lemme \ref{31}, $x+p^{m} \mathbb{Z}=2a+p^{m} \mathbb{Z}$ et $x+4 \mathbb{Z}=2a+4 \mathbb{Z}$ . Ainsi, $M_{l'}(x+p^{m} \mathbb{Z},2a+p^{m} \mathbb{Z},\ldots,2a+p^{m} \mathbb{Z},x+p^{m} \mathbb{Z})=(-1+p^{m}  \mathbb{Z})Id$ et $M_{l}(x+4 \mathbb{Z},2a+4 \mathbb{Z},\ldots,2a+4 \mathbb{Z},x+4 \mathbb{Z})=(1+4  \mathbb{Z})Id$. Ceci est absurde.
\\
\\ \uwave{On suppose $l \equiv 2 [4]$.} On a ${\rm ppcm}(4,l)=2l$ et on a $M_{2l}(2a,\ldots,2a)= Id$ modulo $p^{m}$ et modulo 4. Ainsi, par le lemme chinois, $r=2l$. Soit $x$ tel que $x+p^{m} \mathbb{Z}=2a+p^{m} \mathbb{Z}$ et $x+4 \mathbb{Z}=0+4 \mathbb{Z}$. On a \[M_{l}(x+p^{m} \mathbb{Z},2a+p^{m} \mathbb{Z},\ldots,2a+p^{m} \mathbb{Z},x+p^{m} \mathbb{Z})=(-1+p^{m}  \mathbb{Z})Id\] 
et 
\[M_{l}(x+4 \mathbb{Z},2a+4 \mathbb{Z},\ldots,2a+4 \mathbb{Z},x+4 \mathbb{Z})=(-1+4  \mathbb{Z})Id.\]
\noindent Par le lemme chinois, $M_{l}(\overline{x},\overline{2a},\ldots,\overline{2a},\overline{x})=-Id$ et donc on peut utiliser le $l$-uplet $(\overline{x},\overline{2a},\ldots,\overline{2a},\overline{x})$ pour réduire la solution $\overline{2a}$-monomiale minimale de \eqref{p}.

\end{proof}

\begin{example}
{\rm 
Soit $N=44=4 \times 11$. La solution $(6+11\mathbb{Z})$-monomiale minimale de $(E_{11})$ est de taille $6 \equiv 2 [4]$. Par la proposition précédente, 44 est semi monomialemnt réductible.
}
\end{example}

\noindent On dispose également du résultat général suivant :

\begin{theorem}
\label{54}

Soient $p \in \mathbb{P}$ impair, $l \geq 0$, $m \geq 1$ impair et premier avec $p$, $n \geq 2$, $N=2^{n}\times m \times p^{l}$ et $a$ un entier premier avec $N$. On suppose que la solution $2a$-monomiale minimale de $(E_{2^{n}m})$ est irréductible. On suppose qu'il existe $2 \leq b \leq n$ tel que la solution $2a$-monomiale minimale de $(E_{2^{b}p})$ est irréductible. La solution $\overline{2a}$-monomiale minimale de \eqref{p} est irréductible. 
\\
\\En particulier, s'il existe $2 \leq b \leq n$ tel que $2^{n}m$ et $2^{b}p$ sont semi monomialement irréductibles alors $N$ l'est aussi.

\end{theorem}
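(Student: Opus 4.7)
La stratégie sera un raisonnement par l'absurde analogue à celui de la proposition \ref{37}. On peut supposer $l \geq 1$ (sinon $N=2^{n}m$ et le résultat est exactement la première hypothèse), ce qui assure que $p^{l}$ et $2^{b}p$ divisent $N$. Notons $r$ la taille de la solution $\overline{2a}$-monomiale minimale de \eqref{p} et supposons cette solution réductible : il existe alors une solution $(\overline{x},\overline{2a},\ldots,\overline{2a},\overline{x})$ de \eqref{p} de taille $3 \leq d \leq r-1$. L'idée sera de projeter ce $d$-uplet sur les trois diviseurs $2^{n}m$, $p^{l}$ et $2^{b}p$ de $N$, puis d'appliquer à chaque projection la partie B du lemme \ref{31}. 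Cette application sera légitime : les solutions monomiales minimales correspondantes sont irréductibles, la première par hypothèse, la troisième par hypothèse, et celle de $(E_{p^{l}})$ par le théorème \ref{33} (puisque $2a$ est premier avec $p$, $a$ étant premier avec $N$ et $p$ étant impair). On obtiendra ainsi que $x$ est congru à $0$ ou à $2a$ modulo chacun de ces trois diviseurs.

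Comme $2^{n}m$ et $p^{l}$ sont premiers entre eux, le lemme chinois conduira à traiter quatre cas selon les valeurs de $x$ modulo $2^{n}m$ et modulo $p^{l}$. Si $x \equiv 2a$ dans les deux congruences, alors $\overline{x}=\overline{2a}$ et $(\overline{2a},\ldots,\overline{2a}) \in (\mathbb{Z}/N\mathbb{Z})^{d}$ est une solution de \eqref{p} de taille $d < r$, contredisant la minimalité de $r$. Si $x \equiv 0$ dans les deux congruences, alors $\overline{x}=\overline{0}$ et un calcul utilisant $M_{1}(\overline{0})^{2}=-Id$ montrera que $(\overline{2a},\ldots,\overline{2a}) \in (\mathbb{Z}/N\mathbb{Z})^{d-2}$ est aussi une solution de \eqref{p}. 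Comme $N \geq 4$ et $\overline{2a} \neq \overline{0}$ (puisque $a$ est premier avec $N$), on vérifie aisément que les tailles $1$ et $2$ ne fournissent aucune solution $\overline{2a}$-monomiale ; ainsi $d-2 \geq 3$, et la minimalité de $r$ imposera $d-2 \geq r$, contredisant $d \leq r-1$.

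Le point crucial sera le traitement des deux cas croisés. Considérons par exemple $x \equiv 2a \pmod{2^{n}m}$ et $x \equiv 0 \pmod{p^{l}}$. Puisque $2^{b}$ divise $2^{n}m$ (car $b \leq n$) et $p$ divise $p^{l}$, on aura $x \equiv 2a \pmod{2^{b}}$ et $x \equiv 0 \pmod{p}$. La troisième projection, sur $2^{b}p$, impose $x \equiv 2a$ ou $x \equiv 0$ modulo $2^{b}p$. Dans le premier sous-cas, la congruence $x \equiv 2a \pmod{p}$ combinée à $x \equiv 0 \pmod{p}$ donne $2a \equiv 0 \pmod{p}$, ce qui est absurde puisque $p$ est premier impair et $a$ premier avec $p$. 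Dans le second sous-cas, $x \equiv 0 \pmod{2^{b}}$ combiné à $x \equiv 2a \pmod{2^{b}}$ impose $2^{b-1}$ divise $a$, ce qui est également absurde : la condition $b \geq 2$ donne $2 \mid a$, ce qui contredit le fait que $a$ est premier avec $N$ (donc impair puisque $2 \mid N$). L'autre cas croisé se traitera symétriquement. C'est précisément ici qu'interviennent de façon essentielle les deux contraintes $b \geq 2$ et l'irréductibilité mod $2^{b}p$ : sans elles, on ne peut pas conclure.

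Enfin, l'énoncé en particulier découlera immédiatement de la partie principale : si $a$ est premier avec $N=2^{n}mp^{l}$, alors $a$ est premier avec $2^{n}m$ et avec $2^{b}p$, donc le caractère semi monomialement irréductible de ces deux entiers (qui sont divisibles par 4 puisque $n \geq 2$ et $b \geq 2$) fournira directement les hypothèses d'irréductibilité requises pour la solution $\overline{2a}$-monomiale minimale de $(E_{2^{n}m})$ et de $(E_{2^{b}p})$.
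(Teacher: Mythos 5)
Votre démonstration est correcte et suit essentiellement la même stratégie que celle du texte : réduction à l'absurde, projection du $d$-uplet $(\overline{x},\overline{2a},\ldots,\overline{2a},\overline{x})$ sur les trois diviseurs $2^{n}m$, $p^{l}$ et $2^{b}p$, application du lemme \ref{31} pour contraindre $x$, puis élimination des quatre cas, les deux cas croisés aboutissant aux mêmes contradictions ($2a\equiv 0$ modulo $p$, resp. modulo $2^{b}$). La seule différence est cosmétique : vous travaillez directement avec les congruences modulo $2^{b}$ et modulo $p$, là où le texte explicite $x$ via les coefficients de Bézout ($x=uk2^{n}m$ ou $x=vkp^{l}$), ce qui revient au même.
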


\begin{proof}

Si $l=0$ alors le résultat est vrai par hypothèse. On suppose donc $l \geq 1$. Soit $k=2a$ avec $a$ un entier premier avec $N$. Par le théorème de Bézout, il existe $(u,v) \in \mathbb{Z}^{2}$ tels que $u2^{n}m+vp^{l}=1$. En particulier, $u$ n'est pas divisible par $p$ et $v$ est premier avec $2^{n}m$.
\\
\\Notons $r$ la taille de la solution $\overline{k}$-monomiale minimale de \eqref{p}. Supposons par l'absurde que cette dernière est réductible. Il existe une solution $(\overline{x},\overline{k},\ldots,\overline{k},\overline{x})$ de \eqref{p} de taille $3 \leq d \leq r-1$ permettant de réduire. On va considérer plusieurs diviseurs de $N$.

\begin{itemize}
\item $2^{n}m$ divise $N$. Le $d$-uplet $(x+2^{n}m \mathbb{Z},k+2^{n}m \mathbb{Z},\ldots,k+2^{n}m \mathbb{Z},x+2^{n}m \mathbb{Z})$ est une solution de $(E_{2^{n}m})$. Or, par hypothèse, la solution $(k+2^{n}m \mathbb{Z})$-monomiale minimale de $(E_{2^{n}m})$ est irréductible. Donc, par le lemme \ref{31}, $x \equiv 0, k [2^{n}m]$.
\\
\item $p^{l}$ divise $N$. Le $d$-uplet $(x+p^{l} \mathbb{Z},k+p^{l} \mathbb{Z},\ldots,k+p^{l} \mathbb{Z},x+p^{l} \mathbb{Z})$ est une solution de $(E_{p^{l}})$. Or, comme $k$ n'est pas divisible par $p$, la solution $(k+p^{l} \mathbb{Z})$-monomiale minimale de $(E_{p^{l}})$ est irréductible (Théorème \ref{33}). Par le lemme \ref{31}, $x \equiv 0, k [p^{l}]$.
\\
\item $h=2^{b}p$ divise $N$. Le $d$-uplet $(x+h \mathbb{Z},k+h \mathbb{Z},\ldots,k+h \mathbb{Z},x+h \mathbb{Z})$ est une solution de $(E_{h})$. Or, par hypothèse, la solution $(k+h \mathbb{Z})$-monomiale minimale de $(E_{h})$ est irréductible. Par le lemme \ref{31}, $x \equiv 0, k [h]$.
\\
\end{itemize}

\noindent Si $x \equiv 0 [2^{n}m]$ et $x \equiv 0 [p^{l}]$ alors, par le lemme chinois, $\overline{x}=\overline{0}$. Dans ce cas, le $d-2$-uplet $(\overline{k},\ldots,\overline{k})$ est une solution de \eqref{p}, ce qui est absurde (puisque $d-2 < r$). Si $x \equiv k [2^{n}m]$ et $x \equiv k [p^{l}]$ alors, par le lemme chinois, $\overline{x}=\overline{k}$. Dans ce cas, le $d$-uplet $(\overline{k},\ldots,\overline{k})$ est une solution de \eqref{p}, ce qui est absurde (puisque $d < r$). Donc, on a $x \equiv 0 [2^{n}m]$ et $x \equiv k [p^{l}]$ ou $x \equiv k [2^{n}m]$ et $x \equiv 0 [p^{l}]$.
\\
\\On suppose $x \equiv 0 [2^{n}m]$ et $x \equiv k [p^{l}]$. Par le lemme chinois, on a $x \equiv uk2^{n}m [N]$. Si $x \equiv 0 [h]$ alors $p$ divise $uk2^{n}m$, ce qui est absurde. Donc, $x \equiv k [h]$ et 4 divise $k(u2^{n}m-1)$ (puisque $b \geq 2$). Or, ceci est absurde car $(u2^{n}m-1)$ est impair et $k$ n'est pas divisible par 4.
\\
\\On suppose $x \equiv k [2^{n}m]$ et $x \equiv 0 [p^{l}]$. Par le lemme chinois, on a $x \equiv vkp^{l} [N]$. Si $x \equiv 0 [h]$ alors 4 divise $vkp^{l}$, ce qui est absurde. Donc, $x \equiv k [h]$ et $p$ divise $k(vp^{l}-1)$. Or, ceci est absurde car $k$ et $(vp^{l}-1)$ ne sont pas divisibles par $p$.
\\
\\Ainsi, la solution $\overline{k}$-monomiale minimale de \eqref{p} est irréductible.

\end{proof}

\noindent Avant d'appliquer les deux résultats que nous venons de démontrer, on a besoin du lemme ci-dessous :

\begin{lemma}
\label{55}

Soient $p \in \{3, 5, 7, 17, 31, 127\}$ et $k$ un entier non divisible par $p$. On note $l$ la taille de la solution $\overline{k}$-monomiale minimale de $(E_{p})$. On a $l \not\equiv 2 [4]$.

\end{lemma}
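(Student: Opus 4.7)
The plan is to identify $l$ with the order of the class of $M_1(\overline{k})$ in $PSL_2(\mathbb{Z}/p\mathbb{Z})$ and then use the classical description of element orders in this group.

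By definition, $l$ is the smallest $n \geq 1$ with $M_n(\overline{k},\ldots,\overline{k}) = M_1(\overline{k})^n = \pm Id$, i.e.\ the order of $[M_1(\overline{k})]$ in $PSL_2(\mathbb{Z}/p\mathbb{Z})$. I would split into cases according to the discriminant $\overline{k}^{2}-\overline{4}$. If $\overline{k}=\pm\overline{2}$ the matrix $M_1(\overline{k})$ is unipotent and a direct computation gives $l=p$; otherwise $M_1(\overline{k})$ is semisimple, and diagonalizing it over $\mathbb{Z}/p\mathbb{Z}$ when $\overline{k}^{2}-\overline{4}$ is a nonzero square (respectively over $\mathbb{F}_{p^2}$, inside the norm-one anisotropic torus, when it is a non-square) shows that its order in $SL_2(\mathbb{Z}/p\mathbb{Z})$ divides $p-1$ or $p+1$. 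A short calculation on the order of the eigenvalues then gives that the order in $PSL_2(\mathbb{Z}/p\mathbb{Z})$ divides $(p-1)/2$ or $(p+1)/2$ accordingly: if the eigenvalue has odd order $d$, this order is $d$ and $d\mid(p\pm 1)/2$; if $d$ is even, the order is $d/2$ which also divides $(p\pm 1)/2$.

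The key arithmetic observation is that for each prime in the list, one of $(p-1)/2$ and $(p+1)/2$ is a power of $2$ and the other is odd: the pairs are $(1,2)$, $(2,3)$, $(3,4)$, $(8,9)$, $(15,16)$ and $(63,64)$ for $p=3,5,7,17,31,127$ respectively. Consequently every divisor of either quantity is either odd or a power of $2$, so the only such divisor congruent to $2$ modulo $4$ is $2$ itself; moreover each $p$ in the list is odd, hence $\not\equiv 2\ [4]$.

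Finally, I would rule out the case $l=2$: this would force $M_1(\overline{k})^{2}=\pm Id$, but the $(2,1)$-entry of $M_1(k)^{2}$ equals $\overline{k}$, so $\overline{k}=\overline{0}$, contradicting the hypothesis that $p\nmid k$. Combining the three ingredients yields $l\not\equiv 2\ [4]$. The main obstacle is the structural fact on element orders in $PSL_2(\mathbb{Z}/p\mathbb{Z})$; since there are only six primes to consider and each corresponding group is of modest size, one could alternatively bypass that theory and prove the lemma completely elementarily by tabulating the order of $M_1(\overline{k})$ for each $\overline{k}\in(\mathbb{Z}/p\mathbb{Z})^{\times}$ directly.
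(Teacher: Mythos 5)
Your proof is correct, but it takes a genuinely different route from the paper. The paper's proof is purely computational: it simply lists the sizes of the $\overline{k}$-monomial minimal solutions for every residue $\overline{k}$ modulo each of the six primes (with explicit tables for $p=17$, $31$, $127$) and observes that none is congruent to $2$ modulo $4$. You instead invoke the structure of element orders in $PSL_{2}(\mathbb{Z}/p\mathbb{Z})$: writing $l$ as the order of $M_{1}(\overline{k})$ in that group (which is exactly the identification made in the paper's Lemme \ref{30}), the order divides $p$ in the unipotent case $\overline{k}=\pm\overline{2}$ and otherwise divides $(p-1)/2$ or $(p+1)/2$, and your passage from the order of the eigenvalue $\lambda$ to the order modulo $\{\pm 1\}$ (odd order preserved, even order halved) is correct, as is the exclusion of $l=2$ via the $(2,1)$-entry of $M_{1}(k)^{2}$. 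The decisive arithmetic point — that for each listed $p$ one of $(p\pm1)/2$ is a power of $2$ and the other is odd, so no divisor other than $2$ can be $\equiv 2\ [4]$ — is exactly the observation that these six primes are the Fermat and Mersenne primes up to $127$. What your approach buys is substantial: it is uniform, it avoids the long tables (especially for $p=127$), it explains \emph{why} precisely these primes occur, and it extends verbatim to $257$ and $8191$ (and indeed to every Fermat or Mersenne prime, e.g.\ $65537$ and $131071$, which lie beyond the paper's computational search bound of $60~000$ — a fact worth confronting with the conjecture stated after Corollaire \ref{56}). What the paper's tabulation buys is independence from the (standard but nontrivial) classification of semisimple conjugacy classes in $SL_{2}(\mathbb{F}_{p})$ and $\mathbb{F}_{p^{2}}$, at the cost of generality; you note this alternative yourself in your last sentence, and it is precisely the author's choice.
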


\begin{proof}

i) Si $p=3$ alors $k+3\mathbb{Z}= \pm 1+3\mathbb{Z}$ et la taille de la solution $(k+3\mathbb{Z})$-monomiale minimale de $(E_{3})$ est égale à 3.
\\
\\ii) Si $p=5$ alors $k+5\mathbb{Z} \in \{\pm 1+5\mathbb{Z}, \pm 2+5\mathbb{Z}\}$. Ainsi, la taille de la solution $(k+5\mathbb{Z})$-monomiale minimale de $(E_{5})$ est égale à 3 ou 5.
\\
\\iii) Si $p=7$. Si $k+7\mathbb{Z}=\pm 1+7\mathbb{Z}$ alors la taille de la solution $(k+7\mathbb{Z})$-monomiale minimale de $(E_{7})$ est égale à 3. Si $k+7\mathbb{Z}=\pm 2+7\mathbb{Z}$ alors la taille de la solution $(k+7\mathbb{Z})$-monomiale minimale de $(E_{7})$ est égale à 7. Si $k+7\mathbb{Z}=\pm 3+7\mathbb{Z}$ alors la taille de la solution $(k+7\mathbb{Z})$-monomiale minimale de $(E_{7})$ est égale à 4. En particulier, on obtient dans tous les cas $l \not \equiv 2 [4]$.
\\
\\iv) Pour $p=17$, on utilise les tailles des solutions monomiales minimales de $(E_{17})$ regroupées ci-dessous et le fait que les solutions $\pm \overline{k}$-monomiales minimales ont la même taille :

\begin{center}
\begin{tabular}{|c|c|c|c|c|c|c|c|c|}
\hline
   $\overline{k}$  & $\overline{1}$ & $\overline{2}$ & $\overline{3}$ & $\overline{4}$ & $\overline{5}$ & $\overline{6}$ & $\overline{7}$ & $\overline{8}$    \rule[-7pt]{0pt}{18pt} \\
	\hline
	{\rm taille}  & 3 & 17 & 9 & 9 & 8 & 4 & 9 & 8    \rule[-7pt]{0pt}{18pt} \\
	\hline

\end{tabular}
\end{center}	

\noindent v) Pour $p=31$, on obtient de même :

\begin{center}
\begin{tabular}{|c|c|c|c|c|c|c|c|c|c|c|c|c|c|c|c|}
\hline
   $\overline{k}$  & $\overline{1}$ & $\overline{2}$ & $\overline{3}$ & $\overline{4}$ & $\overline{5}$ & $\overline{6}$ & $\overline{7}$ & $\overline{8}$ & $\overline{9}$ & $\overline{10}$ & $\overline{11}$ & $\overline{12}$ & $\overline{13}$ & $\overline{14}$ & $\overline{15}$     \rule[-7pt]{0pt}{18pt} \\
	\hline
	{\rm taille}  & 3 & 31 & 15 & 16 & 8 & 15 & 15 & 4 & 16 & 16 & 16 & 5 & 5 & 8 & 15   \rule[-7pt]{0pt}{18pt} \\
	\hline

\end{tabular}
\end{center}

\noindent vi) Pour $p=127$, on procède de façon analogue :

\begin{center}
\begin{tabular}{|c|c|c|c|c|c|c|c|c|c|c|c|c|c|c|c|c|c|c|c|c|c|}
\hline
   $\overline{k}$  & $\overline{1}$ & $\overline{2}$ & $\overline{3}$ & $\overline{4}$ & $\overline{5}$ & $\overline{6}$ & $\overline{7}$ & $\overline{8}$ & $\overline{9}$ & $\overline{10}$ & $\overline{11}$ & $\overline{12}$ & $\overline{13}$ & $\overline{14}$ & $\overline{15}$ & $\overline{16}$ & $\overline{17}$ & $\overline{18}$ & $\overline{19}$ & $\overline{20}$ & $\overline{21}$  \rule[-7pt]{0pt}{18pt} \\
	\hline
	{\rm taille}  & 3 & 127 & 64 & 64 & 63 & 63 & 32 & 63 & 16 & 64 & 63 & 63 & 9 & 32 & 63 & 4 & 21 & 64 & 63 & 21& 64    \rule[-7pt]{0pt}{18pt} \\
	\hline

\end{tabular}
\end{center}

\begin{center}
\begin{tabular}{|c|c|c|c|c|c|c|c|c|c|c|c|c|c|c|c|c|c|c|c|c|c|}
\hline
   $\overline{k}$  & $\overline{22}$ & $\overline{23}$ & $\overline{24}$ & $\overline{25}$ & $\overline{26}$ & $\overline{27}$ & $\overline{28}$ & $\overline{29}$ & $\overline{30}$ & $\overline{31}$ & $\overline{32}$ & $\overline{33}$ & $\overline{34}$ & $\overline{35}$ & $\overline{36}$ & $\overline{37}$ & $\overline{38}$ & $\overline{39}$ & $\overline{40}$ & $\overline{41}$ & $\overline{42}$  \rule[-7pt]{0pt}{18pt} \\
	\hline
	{\rm taille}  & 63 & 63 & 7 & 63 & 63 & 64 & 21 & 32 & 32 & 63 & 63 & 21 & 63 & 32 & 7 & 64 & 64 & 63 & 9 & 21 & 8    \rule[-7pt]{0pt}{18pt} \\
	\hline

\end{tabular}
\end{center}

\begin{center}
\begin{tabular}{|c|c|c|c|c|c|c|c|c|c|c|c|c|c|c|c|c|c|c|c|c|c|}
\hline
   $\overline{k}$  & $\overline{43}$ & $\overline{44}$ & $\overline{45}$ & $\overline{46}$ & $\overline{47}$ & $\overline{48}$ & $\overline{49}$ & $\overline{50}$ & $\overline{51}$ & $\overline{52}$ & $\overline{53}$ & $\overline{54}$ & $\overline{55}$ & $\overline{56}$ & $\overline{57}$ & $\overline{58}$ & $\overline{59}$ & $\overline{60}$ & $\overline{61}$ & $\overline{62}$ & $\overline{63}$  \rule[-7pt]{0pt}{18pt} \\
	\hline
	{\rm taille}  & 64 & 64 & 32 & 64 & 16 & 8 & 64 & 16 & 64 & 64 & 9 & 64 & 63 & 21 & 63 & 32 & 32 & 16 & 7 & 63 & 64    \rule[-7pt]{0pt}{18pt} \\
	\hline

\end{tabular}
\end{center}

\end{proof}

\noindent En combinant la proposition \ref{53}, le théorème \ref{54} et le lemme \ref{55}, on obtient le résultat ci-dessous :

\begin{corollary}
\label{56}

Les entiers de la forme $2^{a}3^{b}5^{c}7^{d}17^{e}31^{f}127^{g}$ avec $a \geq 2$ et $b,c,d,e,f,g \geq 0$ sont semi monomialement irréductibles.

\end{corollary}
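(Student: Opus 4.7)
The plan is to proceed by induction on the number of distinct odd prime factors of $N$, using Theorem \ref{54} to adjoin one odd prime at a time, starting from a pure power of $2$. For the base case $N = 2^a$ with $a \geq 2$, any $\alpha$ coprime to $N$ is automatically odd, so $k = 2\alpha$ falls into the third case of Theorem \ref{33}(ii); this gives semi monomial irreducibility of $2^a$.

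The key building block is that $4p$ is semi monomially irreducible for each $p \in \{3, 5, 7, 17, 31, 127\}$. I would obtain this by combining Proposition \ref{53} (with $m = 1$) and Lemma \ref{55}: for $\alpha$ odd and coprime to $p$, the element $2\alpha$ is coprime to $p$ (since $p$ is odd), and Lemma \ref{55} provides that the size $h$ of the $(2\alpha + p\mathbb{Z})$-monomial minimal solution of $(E_p)$ is not congruent to $2$ modulo $4$. This is exactly the hypothesis of the second bullet of Proposition \ref{53}, which yields irreducibility of the $\overline{2\alpha}$-monomial minimal solution of $(E_{4p})$ for every admissible $\alpha$, hence semi monomial irreducibility of $4p$.

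For the inductive step, suppose $N = 2^a m$ is already known to be semi monomially irreducible, where $m$ is an odd product of primes from $\{3, 5, 7, 17, 31, 127\}$, and consider $N' = 2^a m p^l$ where $p$ is a prime in this list not dividing $m$ and $l \geq 1$. Theorem \ref{54} applied with $n = a$ and $b = 2$ (legal because $a \geq 2$) then immediately yields semi monomial irreducibility of $N'$, using the inductive hypothesis on $2^a m$ together with the building block on $2^b p = 4p$. Iterating this step over all odd prime factors of $N = 2^a 3^b 5^c 7^d 17^e 31^f 127^g$ present with positive exponent gives the corollary.

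There is essentially no true obstacle in this assembly: all the content is concentrated in Lemma \ref{55}, which itself rests on the finite case-by-case inspection already carried out there. The list $\{3, 5, 7, 17, 31, 127\}$ is precisely the set of odd primes for which every minimal monomial size in $(E_p)$ associated to a unit avoids residue $2$ modulo $4$, so the corollary reduces to a clean composition of Theorem \ref{33}(ii), Proposition \ref{53}, Lemma \ref{55}, and Theorem \ref{54}.
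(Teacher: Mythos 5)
Your proposal is correct and follows essentially the same route as the paper: establish that $4p$ is semi monomially irreducible for each $p\in\{3,5,7,17,31,127\}$ by combining Proposition \ref{53} and Lemme \ref{55}, then adjoin the odd prime powers one at a time via Théorème \ref{54} (with $b=2$, valid since $a\geq 2$), starting from $2^{a}$. The only cosmetic difference is that you spell out the base case $2^{a}$ via the third bullet of Théorème \ref{33} ii), which the paper leaves implicit.
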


\begin{proof}

Par la proposition \ref{53} et le lemme \ref{55}, les entiers de la forme $4p$ avec $p \in \{3, 5, 7, 17, 31, 127\}$ sont semi monomialement irréductibles.
\\
\\Soient $a \geq 2$ et $p \in \{3, 5, 7, 17, 31, 127\}$. Soit $b \geq 0$. $2^{a} \times 1$ est semi monomialement irréductible et $4p$ est semi monomialement irréductible. Par le théorème \ref{54}, $2^{a}p^{b}$ est semi monomialement irréductible. 
\\
\\Soient $q \in \{3, 5, 7, 17, 31, 127\}-\{p\}$ et $c \geq 0$. $2^{a}p^{b}$ est semi monomialement irréductible, par ce qui précède, et $4q$ est semi monomialement irréductible. Par le théorème \ref{54}, $2^{a}p^{b}q^{c}$ est semi monomialement irréductible. 
\\
\\En continuant ainsi, on obtient le résultat souhaité.

\end{proof}

\begin{examples}
{\rm $84=2^{2} \times 3 \times 7$ et $680=2^{3} \times 5 \times 17$ sont semi monomialement irréductibles.
}
\end{examples}

Les résultats qui viennent d'être établis montrent l'intérêt des nombres premiers pour lesquels les solutions monomiales minimales non nulles ont une taille qui n'est pas congrue à 2 modulo 4. Cela soulève naturellement la question de savoir si ces derniers sont nombreux. Informatiquement, on a cherché tous les nombres premiers inférieurs à $60~000$ vérifiant cette propriété. On a obtenu la liste très restreinte ci-dessous : \[\{3, 5, 7, 17, 31, 127, 257, 8191\}.\]

\noindent Cela invite donc à formuler le problème suivant : 

\begin{pro}

Trouver l'ensemble des nombres premiers pour lesquels toutes les solutions monomiales minimales non nulles ont une taille qui n'est pas congrue à 2 modulo 4.

\end{pro}

Si $a=1$ alors le résultat du corollaire \ref{56} n'est plus forcément vrai. Par exemple, considérons le cas $N=42=2 \times 3 \times 7$. La solution $\overline{10}$-monomiale minimale de \eqref{p} est de taille 24. On peut la réduire avec la solution $(\overline{28},\overline{10},\overline{10},\overline{10},\overline{10},\overline{28})$. Toutefois, on dispose tout de même du résultat suivant :

\begin{proposition}
\label{57}

Les entiers de la forme $2\times 3^{n}5^{m}$ avec $n,m \geq 0$ et $n+m \neq 0$ sont semi monomialement irréductibles.

\end{proposition}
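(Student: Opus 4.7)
Le plan consiste à adapter la stratégie utilisée dans la preuve de la proposition \ref{52} pour traiter simultanément deux facteurs premiers impairs. Lorsque $n = 0$ ou $m = 0$, l'entier $N$ est de la forme $2p^{l}$ et la proposition \ref{52} fournit le résultat. On se concentre donc sur le cas $n \geq 1$ et $m \geq 1$.

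Fixons $a$ premier avec $3^{n}5^{m}$, posons $k = 2a$ et notons $l_{3}$ (resp. $l_{5}$) la taille de la solution $(k + 3^{n}\mathbb{Z})$-monomiale minimale de $(E_{3^{n}})$ (resp. de la solution $(k + 5^{m}\mathbb{Z})$-monomiale minimale de $(E_{5^{m}})$) ; par le théorème \ref{33}, ces deux solutions sont irréductibles. Comme $k + 3\mathbb{Z} \in \{\pm 1 + 3\mathbb{Z}\}$ et $k + 5\mathbb{Z} \in \{\pm 1 + 5\mathbb{Z}, \pm 2 + 5\mathbb{Z}\}$, la taille $h_{3}$ de la solution associée dans $(E_{3})$ vaut $3$ et la taille $h_{5}$ dans $(E_{5})$ appartient à $\{3, 5\}$. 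Le lemme \ref{520} iii) permet alors d'écrire $l_{3}$ comme une puissance de $3$ et $l_{5}$ sous la forme $3 \cdot 5^{b}$ ou $5^{b+1}$ ; en particulier, $l_{3}$ et $l_{5}$ sont impairs. Un calcul utilisant le lemme \ref{30} et le lemme chinois montre ensuite que la taille de la solution $\overline{k}$-monomiale minimale de \eqref{p} vaut $r = 2\,{\rm ppcm}(l_{3}, l_{5})$.

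Supposons cette solution réductible, et soit $(\overline{x}, \overline{k}, \ldots, \overline{k}, \overline{x})$ de taille $d$, avec $3 \leq d \leq r - 1$, une solution réductrice. La réduction modulo $2$ impose $d$ et $x$ pairs ; appliqué aux solutions monomiales minimales irréductibles modulo $3^{n}$ et modulo $5^{m}$, le lemme \ref{31} oblige, pour chacun de ces deux modules, à choisir entre « $d \equiv 0$ et $x \equiv k$ » ou « $d \equiv 2$ et $x \equiv 0$ ». Ceci produit quatre cas. Les deux cas diagonaux sont immédiats : si $x \equiv k$ des deux côtés, le lemme chinois entraîne $\overline{x} = \overline{k}$ et l'on obtient une solution monomiale de taille $d < r$, contredisant la minimalité de $r$ ; si $x \equiv 0$ des deux côtés, l'identité $M_{d}(\overline{0}, \overline{k}, \ldots, \overline{k}, \overline{0}) = S\,M_{d-2}(\overline{k}, \ldots, \overline{k})\,S$ (avec $S = M_{1}(\overline{0})$) montre que $(\overline{k}, \ldots, \overline{k})$ de taille $d - 2 < r$ est une solution, nouvelle contradiction.

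Le cas mixte, disons $x \equiv k \pmod{3^{n}}$ et $x \equiv 0 \pmod{5^{m}}$ (l'autre est symétrique), constitue le point le plus délicat. Les contraintes imposent que $d$ soit multiple de $2l_{3}$ (puisque $d$ est pair et $l_{3}$ impair) et que $d \equiv 2 \pmod{l_{5}}$. En écrivant $d = 2l_{3}t$, ceci se ramène à $l_{3}t \equiv 1 \pmod{l_{5}}$, qui n'a de solution en $t$ que si $\gcd(l_{3}, l_{5}) = 1$ ; le cas $\gcd(l_{3}, l_{5}) = 3$ (équivalent à $h_{5} = 3$) est donc vide. Quand $\gcd(l_{3}, l_{5}) = 1$, les contraintes déterminent exactement un $d_{0} < r$ et un $x$ unique modulo $N$. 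Un calcul matriciel direct---exploitant que $d_{0}/l_{3}$ et $(d_{0} - 2)/l_{5}$ sont pairs---montre alors que $M_{d_{0}}(\overline{x}, \overline{k}, \ldots, \overline{k}, \overline{x})$ se réduit à $Id$ modulo $3^{n}$ mais à $S \cdot Id \cdot S = -Id$ modulo $5^{m}$ ; puisque $3^{n}, 5^{m} \geq 3$, ces deux signes ne peuvent s'accorder en un même $\pm Id$ modulo $N$, et $d_{0}$ ne fournit donc pas de solution de \eqref{p}. L'obstacle principal est précisément ce suivi de signes dans le cas mixte, où il faut coordonner les signes venant de $(E_{3^{n}})$ et $(E_{5^{m}})$ avec la conjugaison par $S$ provoquée par les entrées nulles de $x$.
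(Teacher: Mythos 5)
Your proof is correct, but the decisive step is handled quite differently from the paper. The paper reduces modulo \emph{three} divisors, namely $2\times 3^{n}$, $5^{m}$ and $30$: irreducibility modulo $2\times 3^{n}$ comes from la proposition \ref{52}, and irreducibility modulo $30$ has to be checked separately at the start (the cases $\overline{4},\overline{8},\overline{14}$, including a direct computation for $\overline{8}$ of size $30$); the two mixed cases are then killed by a Bézout/divisibility argument ($3$ or $5$ would have to divide a product of factors that are visibly coprime to it), exactly as in la proposition \ref{37} and le théorème \ref{54}. You instead reduce only modulo $2$, $3^{n}$ and $5^{m}$, and dispose of the mixed cases by a sign computation: since $l_{3}$ and $l_{5}$ are odd (all minimal monomial sizes modulo $3$ and $5$ being $3$ or $5$, via le lemme \ref{520}), the exponents $d/l_{3}$ and $(d-2)/l_{5}$ are forced to be even, so the candidate reducing matrix is $Id$ modulo one odd prime power and $S\,Id\,S=-Id$ modulo the other, which is incompatible. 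This buys you a uniform treatment of all $n,m\geq 1$ (the base case $N=30$ and la proposition \ref{52} are no longer needed as inputs), and it makes visible the structural reason why $3$ and $5$ work — it is the same parity-of-size mechanism as in la proposition \ref{53}. The cost is a more delicate bookkeeping of signs, and the method depends on the oddness of the sizes modulo $3$ and $5$, so it would not extend verbatim to primes $p$ admitting minimal monomial solutions of even size. Two minor remarks: you invoke le lemme \ref{520} iii) for the shape $l_{3}=3^{a+1}$, $l_{5}=h_{5}5^{b}$, which is really established in the \emph{proof} of that lemma (the statement only gives the congruence modulo $4$); and your discussion of $\gcd(l_{3},l_{5})$ and of the uniqueness of $d_{0}$ is superfluous, since the sign contradiction applies to every $d$ satisfying the two congruences, whether or not such a $d$ exists.
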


\begin{proof}

Si $n=0$ ou $m=0$ alors le résultat est vrai (proposition \ref{52}). 
\\
\\Si $N=2 \times 3 \times 5$. Pour montrer que 30 est semi monomialement irréductible, il faut considérer les solutions $\overline{k}$-monomiales minimales avec $\overline{k} \in \{\pm \overline{2}, \pm \overline{4}, \pm \overline{8}, \pm \overline{14}\}$. En utilisant les résultats de la section \ref{pre}, il suffit de s'intéresser aux cas $\overline{k} \in \{\overline{4}, \overline{8}, \overline{14}\}$ : 
\begin{itemize}
\item La solution $\overline{4}$-monomiale minimale de \eqref{p} est de taille 6. Si cette dernière est réductible alors elle est la somme de deux solutions de taille 4. Or, il n'y a pas de solution de la forme $(\overline{a},\overline{4},\overline{4},\overline{a})$ car $\overline{4}^{2} \neq \overline{0}, \overline{2}$.
\\
\item La solution $\overline{8}$-monomiale minimale de \eqref{p} est de taille 30. Les racines modulo 30 du polynôme $X(X-\overline{8})$ sont : $\overline{0}, \overline{8}, \overline{18}, \overline{20}$. On vérifie, par un calcul direct, qu'il n'y a pas de solution de la forme $(\overline{18},\overline{8},\ldots,\overline{8},\overline{18})$ ou $(\overline{20},\overline{8},\ldots,\overline{8},\overline{20})$ de taille inférieure à 29.
\\
\item La solution $\overline{14}$-monomiale minimale de \eqref{p} est de taille 6. Si cette dernière est réductible alors elle est la somme de deux solutions de taille 4. Or, il n'y a pas de solution de la forme $(\overline{a},\overline{14},\overline{14},\overline{a})$ car $\overline{14}^{2} \neq \overline{0}, \overline{2}$.
\\
\end{itemize}

\noindent Ainsi, 30 est semi monomialement irréductible.
\\
\\Si $N=2\times 3^{n}5^{m}$, $n, m \geq 1$ et $n+m \geq 3$. Soit $a$ un entier premier avec $\frac{N}{2}$ et $k=2a$. Par le théorème de Bézout, il existe $(u,v) \in \mathbb{Z}^{2}$ tels que $2u\times 3^{n}+v5^{m}=1$. En particulier, $u$ n'est pas divisible par $5$ et $v$ est premier avec $2 \times 3^{n}$.
\\
\\Notons $r$ la taille de la solution $\overline{k}$-monomiale minimale de \eqref{p}. Supposons par l'absurde que cette dernière est réductible. Il existe une solution $(\overline{x},\overline{k},\ldots,\overline{k},\overline{x})$ de \eqref{p} de taille $3 \leq d \leq r-1$ permettant de réduire. On va considérer plusieurs diviseurs de $N$.

\begin{itemize}
\item $h=2\times 3^{n}$ divise $N$. Le $d$-uplet $(x+h \mathbb{Z},k+h \mathbb{Z},\ldots,k+h \mathbb{Z},x+h \mathbb{Z})$ est une solution de $(E_{h})$. Or, par la proposition \ref{52}, la solution $(k+h \mathbb{Z})$-monomiale minimale de $(E_{h})$ est irréductible. Donc, par le lemme \ref{31}, $x \equiv 0, k [h]$.
\\
\item $5^{m}$ divise $N$. Le $d$-uplet $(x+5^{m} \mathbb{Z},k+5^{m} \mathbb{Z},\ldots,k+5^{m} \mathbb{Z},x+5^{m} \mathbb{Z})$ est une solution de $(E_{5^{m}})$. Or, comme $k$ n'est pas divisible par $5$, la solution $(k+5^{m} \mathbb{Z})$-monomiale minimale de $(E_{5^{m}})$ est irréductible (Théorème \ref{33}). Par le lemme \ref{31}, $x \equiv 0, k [5^{m}]$.
\\
\item $30$ divise $N$. Le $d$-uplet $(x+30 \mathbb{Z},k+30 \mathbb{Z},\ldots,k+30 \mathbb{Z},x+30 \mathbb{Z})$ est une solution de $(E_{30})$. Or, par ce qui précède, la solution $(k+30 \mathbb{Z})$-monomiale minimale de $(E_{30})$ est irréductible. Par le lemme \ref{31}, $x \equiv 0, k [30]$.
\\
\end{itemize}

\noindent Si $x \equiv 0 [h]$ et $x \equiv 0 [5^{m}]$ alors, par le lemme chinois, $\overline{x}=\overline{0}$. Dans ce cas, le $d-2$-uplet $(\overline{k},\ldots,\overline{k})$ est une solution de \eqref{p}, ce qui est absurde (puisque $d-2 < r$). Si $x \equiv k [h]$ et $x \equiv k [5^{m}]$ alors, par le lemme chinois, $\overline{x}=\overline{k}$. Dans ce cas, le $d$-uplet $(\overline{k},\ldots,\overline{k})$ est une solution de \eqref{p}, ce qui est absurde (puisque $d < r$). Donc, on a $x \equiv 0 [h]$ et $x \equiv k [5^{m}]$ ou $x \equiv k [h]$ et $x \equiv 0 [5^{m}]$.
\\
\\Si $x \equiv 0 [h]$ et $x \equiv k [5^{m}]$. Par le lemme chinois, on a $x \equiv ukh [N]$. Si $x \equiv 0 [30]$ alors $5$ divise $ukh$, ce qui est absurde. Donc, $x \equiv k [30]$ et 3 divise $k(uh-1)$. Or, ceci est absurde car $(uh-1)$ et $k$ ne sont pas divisibles par 3.
\\
\\Si $x \equiv k [h]$ et $x \equiv 0 [5^{m}]$. Par le lemme chinois, on a $x \equiv vk5^{m} [N]$. Si $x \equiv 0 [30]$ alors 3 divise $vk5^{m}$, ce qui est absurde. Donc, $x \equiv k [30]$ et $5$ divise $k(v5^{m}-1)$. Or, ceci est absurde car $k$ et $(v5^{m}-1)$ ne sont pas divisibles par $5$.
\\
\\Ainsi, la solution $\overline{k}$-monomiale minimale de \eqref{p} est irréductible et $N$ est semi monomialement irréductible.

\end{proof}

\begin{examples}
{\rm $90=2 \times 3^{2} \times 5$ et $150=2 \times 3 \times 5^{2}$ sont semi monomialement irréductibles.
}
\end{examples}

Cela termine la preuve du théorème \ref{211}. En revanche, contrairement aux théorèmes \ref{26} et \ref{27}, on n'a pas de classification complète. On donne en annexe \ref{D} la liste des entiers semi monomialement irréductibles pairs inférieurs à 2500 et on formule le problème ouvert suivant :

\begin{pro}

Établir une classification complète des entiers semi monomialement irréductibles.

\end{pro}

Notons qu'à la lueur des résultats obtenus dans cette section, il semble que la résolution de ce problème soit étroitement liée à la bonne connaissance des propriétés des solutions monomiales minimales de $(E_{p})$ lorsque $p$ est un nombre premier.
\\
\\
\\
\noindent {\bf Remerciements}.
Je remercie Alain Ninet pour son aide dans l'amélioration des programmes informatiques utilisés pour obtenir certains éléments donnés en annexe.

\appendix

\newpage 

\section{Entiers quasi monomialement irréductibles inférieurs à 1000}
\label{A}

La liste suivante donne les nombres quasi monomialement irréductibles inférieurs à 1000. Les nombres écrits en {\color{blue} bleu} sont premiers. Ceux écrits en {\color{red} rouge} sont de la forme $p^{n}$ avec $p$ premier et $n \geq 2$. Les nombres écrits en {\color{ufogreen} vert} sont ceux de la forme $2^{n}3^{m}$, avec $n, m \geq 1$.
\\
\\ {\color{blue} 2}, {\color{blue} 3}, {\color{red} 4}, {\color{blue} 5}, {\color{ufogreen} 6}, {\color{blue} 7}, {\color{red} 8}, {\color{red} 9}, {\color{blue} 11}, {\color{ufogreen} 12}, {\color{blue} 13}, {\color{red} 16}, {\color{blue} 17}, {\color{ufogreen} 18}, {\color{blue} 19}, {\color{blue} 23}, {\color{ufogreen} 24}, {\color{red} 25}, {\color{red} 27}, {\color{blue} 29}, {\color{blue} 31}, {\color{red} 32}, {\color{ufogreen} 36}, {\color{blue} 37}, {\color{blue} 41}, {\color{blue} 43}, {\color{blue} 47}, {\color{ufogreen} 48}, {\color{red} 49}, {\color{blue} 53}, {\color{ufogreen} 54}, {\color{blue} 59}, {\color{blue} 61}, {\color{red} 64}, {\color{blue} 67}, {\color{blue} 71}, {\color{ufogreen} 72}, {\color{blue} 73}, {\color{blue} 79}, {\color{red} 81}, {\color{blue} 83}, {\color{blue} 89}, {\color{ufogreen} 96}, {\color{blue} 97}, {\color{blue} 101}, {\color{blue} 103}, {\color{blue} 107}, {\color{ufogreen} 108}, {\color{blue} 109}, {\color{blue} 113}, {\color{red} 121}, {\color{red} 125}, {\color{blue} 127}, {\color{red} 128}, {\color{blue} 131}, {\color{blue} 137}, {\color{blue} 139}, {\color{ufogreen} 144}, {\color{blue} 149}, {\color{blue} 151}, {\color{blue} 157}, {\color{ufogreen} 162}, {\color{blue} 163}, {\color{blue} 167}, {\color{red} 169}, {\color{blue} 173}, {\color{blue} 179}, {\color{blue} 181}, {\color{blue} 191}, {\color{ufogreen} 192}, {\color{blue} 193}, {\color{blue} 197}, {\color{blue} 199}, {\color{blue} 211}, {\color{ufogreen} 216}, {\color{blue} 223}, {\color{blue} 227}, {\color{blue} 229}, {\color{blue} 233}, {\color{blue} 239}, {\color{blue} 241}, {\color{red} 243}, {\color{blue} 251}, {\color{red} 256}, {\color{blue} 257}, {\color{blue} 263}, {\color{blue} 269}, {\color{blue} 271}, {\color{blue} 277}, {\color{blue} 281}, {\color{blue} 283}, {\color{ufogreen} 288}, {\color{red} 289}, {\color{blue} 293}, {\color{blue} 307}, {\color{blue} 311}, {\color{blue} 313}, {\color{blue} 317}, {\color{ufogreen} 324}, {\color{blue} 331}, {\color{blue} 337}, {\color{red} 343}, {\color{blue} 347}, {\color{blue} 349}, {\color{blue} 353}, {\color{blue} 359}, {\color{red} 361}, {\color{blue} 367}, {\color{blue} 373}, {\color{blue} 379}, {\color{blue} 383}, {\color{ufogreen} 384}, {\color{blue} 389}, {\color{blue} 397}, {\color{blue} 401}, {\color{blue} 409}, {\color{blue} 419}, {\color{blue} 421}, {\color{blue} 431}, {\color{ufogreen} 432}, {\color{blue} 433}, {\color{blue} 439}, {\color{blue} 443}, {\color{blue} 449}, {\color{blue} 457}, {\color{blue} 461}, {\color{blue} 463}, {\color{blue} 467}, {\color{blue} 479}, {\color{ufogreen} 486}, {\color{blue} 487}, {\color{blue} 491}, {\color{blue} 499}, {\color{blue} 503}, {\color{blue} 509}, {\color{red} 512}, {\color{blue} 521}, {\color{blue} 523}, {\color{red} 529}, {\color{blue} 541}, {\color{blue} 547}, {\color{blue} 557}, {\color{blue} 563}, {\color{blue} 569}, {\color{blue} 571}, {\color{ufogreen} 576}, {\color{blue} 577}, {\color{blue} 587}, {\color{blue} 593}, {\color{blue} 599}, {\color{blue} 601}, {\color{blue} 607}, {\color{blue} 613}, {\color{blue} 617}, {\color{blue} 619}, {\color{red} 625}, {\color{blue} 631}, {\color{blue} 641}, {\color{blue} 643}, {\color{blue} 647}, {\color{ufogreen} 648}, {\color{blue} 653}, {\color{blue} 659}, {\color{blue} 661}, {\color{blue} 673}, {\color{blue} 677}, {\color{blue} 683}, {\color{blue} 691}, {\color{blue} 701}, {\color{blue} 709}, {\color{blue} 719}, {\color{blue} 727}, {\color{red} 729}, {\color{blue} 733}, {\color{blue} 739}, {\color{blue} 743}, {\color{blue} 751}, {\color{blue} 757}, {\color{blue} 761}, {\color{ufogreen} 768}, {\color{blue} 769}, {\color{blue} 773}, {\color{blue} 787}, {\color{blue} 797}, {\color{blue} 809}, {\color{blue} 811}, {\color{blue} 821}, {\color{blue} 823}, {\color{blue} 827}, {\color{blue} 829}, {\color{blue} 839}, {\color{red} 841}, {\color{blue} 853}, {\color{blue} 857}, {\color{blue} 859}, {\color{blue} 863}, {\color{ufogreen} 864}, {\color{blue} 877}, {\color{blue} 881}, {\color{blue} 883}, {\color{blue} 887}, {\color{blue} 907}, {\color{blue} 911}, {\color{blue} 919}, {\color{blue} 929}, {\color{blue} 937}, {\color{blue} 941}, {\color{blue} 947}, {\color{blue} 953}, {\color{red} 961}, {\color{blue} 967}, {\color{blue} 971}, {\color{ufogreen} 972}, {\color{blue} 977}, {\color{blue} 983}, {\color{blue} 991}, {\color{blue} 997}.

\section{Nombre de solutions monomiales minimales irréductibles de $(E_{2^{n}3^{m}})$ lorsque $nm \geq 1$ et $2^{n}3^{m} \leq 1000$}
\label{B}

Si $N$ est un entier, on note $\varphi(N)$ l'indicatrice d'Euler évaluée en $N$ et $\omega(N)$ le nombre de solutions monomiales minimales irréductibles de \eqref{p}. Si $N$ est la puissance d'un nombre premier alors le théorème \ref{33} donne la valeur de $\omega(N)$. Dans les deux tableaux ci-dessous, on a réuni les valeurs de $\omega(N)$ pour $N=2^{n}3^{m} \leq 1000$ (avec $nm \geq 1$), obtenues informatiquement, et on les compare à $\varphi(N)=2^{n}3^{m-1}$.
 \newline

\begin{center}
\begin{tabular}{|c|c|c|c|c|c|c|c|c|c|c|c|c|c|}
\hline
   $N$ & 6 & 12 & 18 & 24 & 36 & 48 & 54 & 72 & 96 & 108 & 144 & 162 & 192  \rule[-7pt]{0pt}{18pt} \\
	\hline
	$\varphi(N)$ & 2 & 4 & 6 & 8 & 12 & 16 & 18 & 24 & 32 & 36 & 48 & 54 & 64   \rule[-7pt]{0pt}{18pt} \\
	\hline
	$\omega(N)$ & 5 & 11 & 15 & 23 & 31 & 41 & 39 & 63 & 77 & 79 & 113 & 111 & 149   \rule[-7pt]{0pt}{18pt} \\
	\hline
	
\end{tabular}
\end{center}
	
\vspace{0.5cm}
	
\begin{center}
\begin{tabular}{|c|c|c|c|c|c|c|c|c|c|c|c|}
\hline
   $N$  & 216 & 288 & 324 & 384 & 432 & 486 & 576 & 648 & 768 & 864 & 972   \rule[-7pt]{0pt}{18pt} \\
	\hline
	$\varphi(N)$  & 72 & 96 & 108 & 128 & 144 & 162 & 192 & 216 & 256 & 288 & 324   \rule[-7pt]{0pt}{18pt} \\
	\hline
	$\omega(N)$  & 159 & 213 & 223 & 293 & 281 & 327 & 413 & 447 & 581 & 525 & 655    \rule[-7pt]{0pt}{18pt} \\
	\hline
	
\end{tabular}
\end{center}	

\section{Liste des solutions monomiales minimales réductibles de $(E_{2^{n}3^{m}})$ pour quelques valeurs de $n$ et de $m$}
\label{C}

On donne ci-dessous, pour chaque $N$ choisi, la liste (obtenue informatiquement) des entiers $0 \leq k \leq N-1$ pour lesquels la solution $\overline{k}$-monomiale minimale de \eqref{p} est réductible :

\begin{itemize}
\item $N=48=2^{4} \times 3$,
\\ (0, 4, 12, 20, 28, 36, 44);
\\
\item $N=108=2^{2} \times 3^{3}$,
\\ (0, 3, 6, 12, 15, 18, 21, 24, 30, 33, 36, 39, 42, 48, 51, 57, 60, 66, 69, 72, 75, 78, 84, 87, 90, 93, 96, 102, 105);
\\
\item $N=192=2^{6} \times 3$,
\\ (0, 4, 8, 12, 16, 20, 24, 28, 36, 40, 44, 48, 52, 56, 60, 68, 72, 76, 80, 84, 88, 92, 100, 104, 108, 112, 116, 120, 124, 132, 136, 140, 144, 148, 152, 156, 164, 168, 172, 176, 180, 184, 188);
\\
\item $N=216=2^{3} \times 3^{3}$,
\\ (0, 3, 6, 12, 15, 18, 21, 24, 30, 33, 36, 39, 42, 48, 51, 57, 60, 66, 69, 72, 75, 78, 84, 87, 90, 93, 96, 102, 105, 111, 114, 120, 123, 126, 129, 132, 138, 141, 144, 147, 150, 156, 159, 165, 168, 174, 177, 180, 183, 186, 192, 195, 198, 201, 204, 210, 213);
\\
\item $N=384=2^{7} \times 3$,
\\ (0, 4, 8, 12, 16, 20, 24, 28, 32, 36, 40, 44, 48, 52, 56, 60, 68, 72, 76, 80, 84, 88, 92, 96, 100, 104, 108, 112, 116, 120, 124, 132, 136, 140, 144, 148, 152, 156, 160, 164, 168, 172, 176, 180, 184, 188, 196, 200, 204, 208, 212, 216, 220, 224, 228, 232, 236, 240, 244, 248, 252, 260, 264, 268, 272, 276, 280, 284, 288, 292, 296, 300, 304, 308, 312, 316, 324, 328, 332, 336, 340, 344, 348, 352, 356, 360, 364, 368, 372, 376, 380);
\\
\item $N=864=2^{5} \times 3^{3}$,
\\ (0, 3, 4, 6, 8, 12, 15, 18, 20, 21, 24, 28, 30, 33, 36, 39, 40, 42, 44, 48, 51, 52, 56, 57, 60, 66, 68, 69, 72, 75, 76, 78, 84, 87, 88, 90, 92, 93, 96, 100, 102, 104, 105, 108, 111, 114, 116, 120, 123, 124, 126, 129, 132,136, 138, 140, 141, 144, 147, 148, 150, 152, 156, 159, 164, 165, 168, 172, 174, 177, 180, 183, 184, 186, 188, 192, 195, 196, 198, 200, 201, 204, 210, 212, 213, 216, 219, 220, 222, 228, 231, 232, 234, 236, 237, 240, 244, 246, 248, 249, 252, 255, 258, 260, 264, 267, 268, 273, 276, 280, 282, 284, 285, 288, 291, 292, 294, 296, 300, 303, 306, 308, 309, 312, 316, 318, 321, 324, 327, 328, 330, 332, 336, 339, 340, 342, 344, 345, 348, 354, 356, 357, 360, 363, 364, 366, 372, 375, 376, 380, 381, 384, 388, 390, 392, 393, 396, 399, 402, 404, 408, 411, 412, 414, 417, 420, 424, 426, 428, 429, 435, 436, 438, 440, 444, 447, 450, 452, 453, 456, 460, 462, 465, 468, 471, 472, 474, 476, 480, 483, 484, 488, 489, 492, 498, 500, 501, 504, 507, 508, 510, 516, 519, 520, 522, 524, 525, 528, 532, 534, 536, 537, 540, 543, 546, 548, 552, 555, 556, 558, 561, 564, 568, 570, 572, 573, 576, 579, 580, 582, 584, 588, 591, 596, 597, 600, 604, 606, 609, 612, 615, 616, 618, 620, 624, 627, 628, 630, 632, 633, 636, 642, 644, 645, 648, 651, 652, 654, 660, 663, 664, 666, 668, 669, 672, 676, 678, 680, 681, 684, 687, 690, 692, 696, 699, 700, 705, 708, 712, 714, 716, 717, 720, 723, 724, 726, 728, 732, 735, 738, 740, 741, 744, 748, 750, 753, 756, 759, 760, 762, 764, 768, 771, 772, 774, 776, 777, 780, 786, 788, 789, 792, 795, 796, 798, 804, 807, 808, 812, 813, 816, 820, 822, 824, 825, 828, 831, 834, 836, 840, 843, 844, 846, 849, 852, 856, 858, 860, 861).

\end{itemize}

\section{Entiers semi monomialement irréductibles pairs inférieurs à 2500}
\label{D}

La liste suivante donne la liste (obtenue informatiquement) des nombres semi monomialement irréductibles pairs inférieurs à 2500. Les nombres écrits en {\color{blue} bleu} sont de la forme $2p^{n}$ avec $p$ premier et $n \geq 1$. Ceux écrits en {\color{ufogreen} vert} sont ceux donnés dans le corollaire \ref{56} et la proposition \ref{57} (à l'exception des nombres de la forme $2^{n}$, $2\times 3^{n}$ et $2 \times 5^{n}$ déjà écrits en bleu). Les nombres écrits en {\color{red} rouge} sont ceux sur lequel on ne possède pas de résultat théorique établissant leur caractère semi monomialement irréductible.
\\
\\{\color{blue} 4}, {\color{blue} 6}, {\color{blue} 8}, {\color{blue} 10}, {\color{ufogreen} 12}, {\color{blue} 14}, {\color{blue} 16}, {\color{blue} 18}, {\color{ufogreen} 20}, {\color{blue} 22}, {\color{ufogreen} 24}, {\color{blue} 26}, {\color{ufogreen} 28}, {\color{ufogreen} 30}, {\color{blue} 32}, {\color{blue} 34}, {\color{ufogreen} 36}, {\color{blue} 38}, {\color{ufogreen} 40}, {\color{blue} 46}, {\color{ufogreen} 48}, {\color{blue} 50}, {\color{blue} 54}, {\color{ufogreen} 56}, {\color{blue} 58}, {\color{ufogreen} 60}, {\color{blue} 62}, {\color{blue} 64}, {\color{red} 66}, {\color{ufogreen} 68}, {\color{ufogreen} 72}, {\color{blue} 74}, {\color{red} 78}, {\color{ufogreen} 80}, {\color{blue} 82}, {\color{ufogreen} 84}, {\color{blue} 86}, {\color{ufogreen} 90}, {\color{blue} 94}, {\color{ufogreen} 96}, {\color{blue} 98}, {\color{ufogreen} 100}, {\color{blue} 106}, {\color{ufogreen} 108}, {\color{ufogreen} 112}, {\color{blue} 118}, {\color{ufogreen} 120}, {\color{blue} 122}, {\color{ufogreen} 124}, {\color{blue} 128}, {\color{red} 132}, {\color{blue} 134}, {\color{ufogreen} 136}, {\color{ufogreen} 140}, {\color{blue} 142}, {\color{ufogreen} 144}, {\color{blue} 146}, {\color{ufogreen} 150}, {\color{red} 156}, {\color{blue} 158}, {\color{ufogreen} 160}, {\color{blue} 162}, {\color{blue} 166}, {\color{ufogreen} 168}, {\color{blue} 178}, {\color{ufogreen} 180}, {\color{ufogreen} 192}, {\color{blue} 194}, {\color{ufogreen} 196}, {\color{red} 198}, {\color{ufogreen} 200}, {\color{blue} 202}, {\color{ufogreen} 204}, {\color{blue} 206}, {\color{blue} 214}, {\color{ufogreen} 216}, {\color{blue} 218}, {\color{red} 222}, {\color{ufogreen} 224}, {\color{blue} 226}, {\color{red} 234}, {\color{ufogreen} 240}, {\color{blue} 242}, {\color{ufogreen} 248}, {\color{blue} 250}, {\color{ufogreen} 252}, {\color{blue} 254}, {\color{blue} 256}, {\color{blue} 262}, {\color{red} 264}, {\color{ufogreen} 270}, {\color{ufogreen} 272}, {\color{blue} 274}, {\color{red} 276}, {\color{blue} 278}, {\color{ufogreen} 280}, {\color{ufogreen} 288}, {\color{blue} 298}, {\color{ufogreen} 300}, {\color{blue} 302}, {\color{red} 312}, {\color{blue} 314}, {\color{ufogreen} 320}, {\color{ufogreen} 324}, {\color{blue} 326}, {\color{red} 330}, {\color{blue} 334}, {\color{ufogreen} 336}, {\color{blue} 338}, {\color{ufogreen} 340}, {\color{blue} 346}, {\color{blue} 358}, {\color{ufogreen} 360}, {\color{blue} 362}, {\color{ufogreen} 372}, {\color{blue} 382}, {\color{ufogreen} 384}, {\color{blue} 386}, {\color{red} 390}, {\color{ufogreen} 392}, {\color{blue} 394}, {\color{red} 396}, {\color{blue} 398}, {\color{ufogreen} 400}, {\color{ufogreen} 408}, {\color{ufogreen} 420}, {\color{blue} 422}, {\color{ufogreen} 432}, {\color{red} 444}, {\color{blue} 446}, {\color{ufogreen} 448}, {\color{ufogreen} 450}, {\color{blue} 454}, {\color{blue} 458}, {\color{blue} 466}, {\color{red} 468}, {\color{ufogreen} 476}, {\color{blue} 478}, {\color{ufogreen} 480}, {\color{blue} 482}, {\color{blue} 486}, {\color{ufogreen} 496}, {\color{ufogreen} 500}, {\color{blue} 502}, {\color{ufogreen} 504}, {\color{ufogreen} 508}, {\color{blue} 512}, {\color{blue} 514}, {\color{blue} 526}, {\color{red} 528}, {\color{blue} 538}, {\color{ufogreen} 540}, {\color{blue} 542}, {\color{ufogreen} 544}, {\color{red} 552}, {\color{blue} 554}, {\color{ufogreen} 560}, {\color{blue} 562}, {\color{red} 564}, {\color{blue} 566}, {\color{ufogreen} 576}, {\color{blue} 578}, {\color{blue} 586}, {\color{ufogreen} 588}, {\color{red} 594}, {\color{ufogreen} 600}, {\color{ufogreen} 612}, {\color{blue} 614}, {\color{ufogreen} 620}, {\color{blue} 622}, {\color{red} 624}, {\color{blue} 626}, {\color{blue} 634}, {\color{ufogreen} 640}, {\color{red} 642}, {\color{ufogreen} 648}, {\color{red} 654}, {\color{red} 660}, {\color{blue} 662}, {\color{red} 666}, {\color{ufogreen} 672}, {\color{blue} 674}, {\color{ufogreen} 680}, {\color{blue} 686}, {\color{blue} 694}, {\color{blue} 698}, {\color{ufogreen} 700}, {\color{red} 702}, {\color{blue} 706}, {\color{blue} 718}, {\color{ufogreen} 720}, {\color{blue} 722}, {\color{red} 726}, {\color{blue} 734}, {\color{ufogreen} 744}, {\color{blue} 746}, {\color{ufogreen} 750}, {\color{ufogreen} 756}, {\color{blue} 758}, {\color{blue} 766}, {\color{ufogreen} 768}, {\color{blue} 778}, {\color{red} 780}, {\color{ufogreen} 784}, {\color{red} 792}, {\color{blue} 794}, {\color{ufogreen} 800}, {\color{blue} 802}, {\color{ufogreen} 810}, {\color{ufogreen} 816}, {\color{blue} 818}, {\color{red} 828}, {\color{blue} 838}, {\color{ufogreen} 840}, {\color{blue} 842}, {\color{red} 852}, {\color{red} 858}, {\color{blue} 862}, {\color{ufogreen} 864}, {\color{blue} 866}, {\color{ufogreen} 868}, {\color{red} 876}, {\color{blue} 878}, {\color{blue} 886}, {\color{red} 888}, {\color{ufogreen} 896}, {\color{blue} 898}, {\color{ufogreen} 900}, {\color{blue} 914}, {\color{blue} 922}, {\color{red} 924}, {\color{blue} 926}, {\color{blue} 934}, {\color{red} 936}, {\color{ufogreen} 952}, {\color{blue} 958}, {\color{ufogreen} 960}, {\color{ufogreen} 972}, {\color{blue} 974}, {\color{ufogreen} 980}, {\color{blue} 982}, {\color{red} 990}, {\color{ufogreen} 992}, {\color{blue} 998}
\\
\\{\color{ufogreen} 1000}, {\color{blue} 1006}, {\color{ufogreen} 1008}, {\color{red} 1014}, {\color{ufogreen} 1016}, {\color{blue} 1018}, {\color{ufogreen} 1020}, {\color{blue} 1024}, {\color{red} 1028}, {\color{blue} 1042}, {\color{blue} 1046}, {\color{red} 1056}, {\color{blue} 1058}, {\color{ufogreen} 1080}, {\color{blue} 1082}, {\color{ufogreen} 1088}, {\color{red} 1092}, {\color{blue} 1094}, {\color{red} 1104}, {\color{red} 1110}, {\color{blue} 1114}, {\color{ufogreen} 1116}, {\color{ufogreen} 1120}, {\color{blue} 1126}, {\color{red} 1128}, {\color{blue} 1138}, {\color{blue} 1142}, {\color{ufogreen} 1152}, {\color{blue} 1154}, {\color{ufogreen} 1156}, {\color{red} 1164}, {\color{red} 1170}, {\color{blue} 1174}, {\color{ufogreen} 1176}, {\color{blue} 1186}, {\color{red} 1188}, {\color{blue} 1198}, {\color{ufogreen} 1200}, {\color{blue} 1202}, {\color{blue} 1214}, {\color{ufogreen} 1224}, {\color{blue} 1226}, {\color{blue} 1234}, {\color{blue} 1238}, {\color{ufogreen} 1240}, {\color{red} 1248}, {\color{blue} 1250}, {\color{ufogreen} 1260}, {\color{blue} 1262}, {\color{ufogreen} 1280}, {\color{blue} 1282}, {\color{red} 1284}, {\color{blue} 1286}, {\color{blue} 1294}, {\color{ufogreen} 1296}, {\color{blue} 1306}, {\color{red} 1308}, {\color{blue} 1318}, {\color{red} 1320}, {\color{blue} 1322}, {\color{red} 1332}, {\color{ufogreen} 1344}, {\color{blue} 1346}, {\color{ufogreen} 1350}, {\color{blue} 1354}, {\color{ufogreen} 1360}, {\color{blue} 1366}, {\color{ufogreen} 1372}, {\color{red} 1380}, {\color{blue} 1382}, {\color{ufogreen} 1400}, {\color{blue} 1402}, {\color{red} 1404}, {\color{blue} 1418}, {\color{ufogreen} 1428}, {\color{blue} 1438}, {\color{ufogreen} 1440}, {\color{red} 1452}, {\color{blue} 1454}, {\color{blue} 1458}, {\color{blue} 1466}, {\color{blue} 1478}, {\color{blue} 1486}, {\color{ufogreen} 1488}, {\color{ufogreen} 1500}, {\color{blue} 1502}, {\color{ufogreen} 1512}, {\color{blue} 1514}, {\color{blue} 1522}, {\color{ufogreen} 1524}, {\color{ufogreen} 1536}, {\color{blue} 1538}, {\color{blue} 1546}, {\color{red} 1560}, {\color{ufogreen} 1568}, {\color{blue} 1574}, {\color{red} 1584}, {\color{blue} 1594}, {\color{ufogreen} 1600}, {\color{blue} 1618}, {\color{ufogreen} 1620}, {\color{blue} 1622}, {\color{ufogreen} 1632}, {\color{blue} 1642}, {\color{blue} 1646}, {\color{red} 1650}, {\color{blue} 1654}, {\color{red} 1656}, {\color{blue} 1658}, {\color{blue} 1678}, {\color{ufogreen} 1680}, {\color{blue} 1682}, {\color{red} 1692}, {\color{ufogreen} 1700}, {\color{red} 1704}, {\color{blue} 1706}, {\color{blue} 1714}, {\color{red} 1716}, {\color{blue} 1718}, {\color{blue} 1726}, {\color{ufogreen} 1728}, {\color{ufogreen} 1736}, {\color{red} 1752}, {\color{blue} 1754}, {\color{blue} 1762}, {\color{ufogreen} 1764}, {\color{blue} 1766}, {\color{blue} 1774}, {\color{red} 1776}, {\color{red} 1782}, {\color{ufogreen} 1792}, {\color{ufogreen} 1800}, {\color{blue} 1814}, {\color{blue} 1822}, {\color{ufogreen} 1836}, {\color{blue} 1838}, {\color{red} 1848}, {\color{blue} 1858}, {\color{ufogreen} 1860}, {\color{red} 1872}, {\color{blue} 1874}, {\color{blue} 1882}, {\color{blue} 1894}, {\color{ufogreen} 1904}, {\color{blue} 1906}, {\color{ufogreen} 1920}, {\color{blue} 1922}, {\color{red} 1926}, {\color{red} 1932}, {\color{blue} 1934}, {\color{blue} 1942}, {\color{ufogreen} 1944}, {\color{red} 1950}, {\color{blue} 1954}, {\color{ufogreen} 1960}, {\color{red} 1962}, {\color{blue} 1966}, {\color{red} 1980}, {\color{blue} 1982}, {\color{ufogreen} 1984}, {\color{blue} 1994}, {\color{red} 1998}
\\
\\{\color{ufogreen} 2000}, {\color{ufogreen} 2016}, {\color{blue} 2018}, {\color{blue} 2026}, {\color{red} 2028}, {\color{ufogreen} 2032}, {\color{blue} 2038}, {\color{ufogreen} 2040}, {\color{blue} 2042}, {\color{blue} 2048}, {\color{red} 2056}, {\color{blue} 2062}, {\color{blue} 2066}, {\color{blue} 2078}, {\color{blue} 2098}, {\color{ufogreen} 2100}, {\color{blue} 2102}, {\color{red} 2106}, {\color{ufogreen} 2108}, {\color{red} 2112}, {\color{blue} 2122}, {\color{blue} 2126}, {\color{blue} 2138}, {\color{ufogreen} 2160}, {\color{blue} 2174}, {\color{ufogreen} 2176}, {\color{red} 2178}, {\color{blue} 2182}, {\color{red} 2184}, {\color{blue} 2186}, {\color{blue} 2194}, {\color{blue} 2206}, {\color{red} 2208}, {\color{blue} 2218}, {\color{red} 2220}, {\color{ufogreen} 2232}, {\color{blue} 2234}, {\color{ufogreen} 2240}, {\color{red} 2244}, {\color{blue} 2246}, {\color{ufogreen} 2250}, {\color{red} 2256}, {\color{blue} 2258}, {\color{ufogreen} 2268}, {\color{red} 2292}, {\color{blue} 2302}, {\color{ufogreen} 2304}, {\color{blue} 2306}, {\color{ufogreen} 2312}, {\color{red} 2316}, {\color{blue} 2326}, {\color{red} 2328}, {\color{red} 2340}, {\color{blue} 2342}, {\color{ufogreen} 2352}, {\color{blue} 2362}, {\color{blue} 2374}, {\color{red} 2376}, {\color{ufogreen} 2380}, {\color{blue} 2386}, {\color{ufogreen} 2400}, {\color{blue} 2402}, {\color{blue} 2426}, {\color{ufogreen} 2430}, {\color{blue} 2434}, {\color{red} 2442}, {\color{blue} 2446}, {\color{ufogreen} 2448}, {\color{blue} 2458}, {\color{blue} 2462}, {\color{blue} 2474}, {\color{ufogreen} 2480}, {\color{red} 2484}, {\color{red} 2496}, {\color{blue} 2498}, {\color{ufogreen} 2500}.
\\
\\À partir des éléments ci-dessus, on peut, en utilisant le théorème \ref{54}, obtenir de nouveaux nombres semi monomialement irréductibles. Par exemple, $132=2^{2} \times 33$ est semi moniomialement irréductible et $4 \times 31$ l'est aussi, par le corollaire \ref{56}. Donc, par le théorème \ref{54}, $4092 =2^{2} \times 33 \times 31$ est semi moniomialement irréductible.

\end{document}